\newtheorem{theorem}{Theorem}[section]
\newtheorem{lemma}[theorem]{Lemma}
\DeclareMathOperator{\e}{e}
\DeclareMathOperator{\Real}{Re}
\DeclareMathOperator{\Imag}{Im}
\journal{The European Physical Journal Plus (EPJP)}
\begin{document}

\begin{frontmatter}
\title{Effect of delay and control on a predator-prey ecosystem with generalist predator and group defence in the prey species}

\author[label1]{Rajesh Ranjan Patra}
\author[label2]{Soumen Kundu\corref{cor1}}\ead{soumenkundu75@gmail.com}
\author[label1]{Sarit Maitra}

\address[label1]{Department of Mathematics, NIT Durgapur, Durgapur-713209, India}
\address[label2]{\corref{cor1} Department of Mathematics, Faculty of Science and Technology, ICFAI University Tripura, Kamalghat, Mohanpur, West Tripura-799210, India}
\cortext[cor1]{Corresponding author}

\begin{abstract}
Generalist predators consist an important component of an ecosystem which may act as a biocontrol agent and influence the dynamics significantly.
In this paper, we have studied the effect of delayed logistic growth of the prey species with group defence behaviour. The Lyapunov stability criteria for the interior equilibrium point is derived. Also, the condition of Hopf-bifurcation and the point of bifurcation are obtained. The length of the delay is also estimated for the system to preserve stability. Numerical simulations are performed and illustrated to support the obtained analytical results. Using a feedback control mechanism, the stability of the unstable equilibrium point is restored. Latin Hypercube Sampling/Partial Rank Correlation Coefficient (LHS/PRCC) sensitivity analysis, which is an efficient tool often employed in uncertainty analysis, is used to explore the entire parameter space of a model.

\begin{keyword}
Logistic Delay \sep Generalist Predator \sep Group Defence \sep Leslie-Gower scheme \sep Feedback Control
\end{keyword}

\end{abstract}
\end{frontmatter}

\section{Introduction}
Management of natural resources through preservation and restoration using biological control agents are drawing the attention of ecologists nowadays\cite{van2010}. Generalist predators have the potential to act as biological control agents. Biological control methods, which help in protecting the flora and fauna of an ecosystem, are used in many recovery plans\cite{causton2001}.

One way to successfully deploy biological control is by introducing a population species that preys upon the invasive species.  Using the generalist predators by exploring their ability to reduce the numbers of a pest significantly while ensuring the effective reduction of loss in crops have been discussed by the authors\cite{symondson2002}. The effect of predator-prey interactions, which may depend on the predator's per capita killing rate, is reflected in the form of functional response. Holling\cite{holling1959}, in 1959, proposed three functional responses based upon some characteristics of types of predation. Later various other types of functional responses have been introduced by Beddington-DeAngelis\cite{bedd,dea}, Arditi-Ginzburg\cite{arditi1989}, Hassell-Varley\cite{hassell}, etc. Several authors studied these functional responses in various ecosystems. Tian and Xu\cite{tian2011} studied the global dynamics of a predator-prey system using Holling type II functional response. Liu et al.\cite{liu2019} analyzed a host-parasitoid model using Holling type III functional response in the presence of the Allee effect. With Beddington-DeAngelis functional response Li et al.\cite{li2017} studied a stage-structured plant-pollinator model. Gakkhar\cite{gakkhar2002} studied chaos in a food chain model with the ratio-dependent functional response.

Forming group is one of the most fascinating behaviours seen in diverse
animal species. The characteristic is incorporated in the predator's functional responses while modelling predator-prey dynamics.
There are evidence of both predators and preys forming groups\cite{ioannou2017}. Predators form groups to maximize their predation rate, while preys form groups to reduce the predation rate. In 1969, Hassell-Varley\cite{hassell} introduced a functional response to incorporate the grouping behaviour in species. In 1999, Cosner\cite{cosner} proposed a theory on the structure of a functional response to model the grouping of individuals in a species. According to Cosner, a functional response will be both predator and prey density dependent if predators are assumed to form dense colonies. In contrast, a functional response will be only prey density dependent if predators are assumed to have homogeneous spatial distribution. Grouping in prey is often justified as group defence. The term group defence is used to characterise the phenomenon due to the ability of the prey species to defend or disguise themselves against the attacking predator species, as a result of which, the predation rate is decreased or sometimes prevented by a large number of prey individuals. In an example given by Tener\cite{tener1965}, a lone musk ox can be attacked successfully by wolves, whereas small herds of musk oxen, consisting of 2 to 6 animals, are predated but with rare success. Moreover, in large herds, no successful attacks have been observed. Another example was presented by Davidowicz, Gliwicz \& Gulati\cite{davidowicz1988}. Daphnia can feed on Filamentous algae at low concentrations, but the later can jam the filtering apparatus of the former when present in high concentrations. As a third example, large swarms of insects make individual identification difficult for their predators\cite{holmes1972}. More related examples can be found in \cite{andrews1968, edwards1970, boon1962}, where a very similar phenomenon, called as ``inhibitory effect", is presented, which limits the growth of the micro-organisms at higher concentrations of certain nutrients.

There are many literatures which use different functional responses to depict the herd behaviour in prey species. Ajraldi et al.\cite{ajraldi2011} used the square-root functional response, $F(x)=m\sqrt{x}$, to show group defence where the individuals forming the perimeter of the group are exposed to predation, which is directly proportional to the square root of the total population. Braza\cite{braza2012} used the same idea to formulate a new functional response, $F(x)=\alpha\sqrt{x}/(1+t_h\alpha\sqrt{x}),$ considering the portion of the population which is prone to attack and used the derivation process of the Holling type II functional response. Djilali\cite{djilali2019} further generalized the response function to include the rate of the pack shape($k$) and presented a functional response $F(x)=\alpha x^k/(1+t_h\alpha x^k),\, k>0.$ Geritz and Gyllenberg\cite{geritz2013} introduced a functional response where the prey population exhibits group defence by forming groups of some sizes which vary when  individuals join and leave the group. Kumar and Kumari\cite{kumarkumari2021} used the Ivlev-like response function for defence by the prey species in a tritrophic food chain model. Also, the Holling type IV response\cite{andrews1968}$F(x)=mx/(x^2+bx+c)$ and the simplified Holling type IV\cite{sokol1981} $F(x)=mx/x^2+c$ are used for group defence phenomenon.

Time delays are natural components of the dynamic processes of biology, physiology, ecology, epidemiology etc.\cite{gopalsamy1992}. Hence, delay differential equations are often used while modelling natural population dynamics\cite{ding2017}. 
The presence of delay can bring severe change in the stability of a system like the destabilization of the system, large oscillations\cite{fowler1986}, the occurrence of chaotic behaviour\cite{sun2007,may1976} etc. 
In some instances, like population growth, physiology of breeding, organic insusceptible reactions, the growth rate of the species does not respond immediately. So, we need to confront a time delay\cite{kundu2018}. 
Hutchinson\cite{hutch1948} was the first to introduce a time delay in a logistic differential equation. 
Also, time delays are introduced to population models to incorporate the time duration for various biological processes such as gestation, incubation and maturation of a species\cite{upadhyay2016}. 
The discrete delay parameter, in the delayed logistic equation, can represent maturation time delay in a population species as suggested by Murray\cite{murray2002}, Alfifi\cite{alfifi2021} \& Fowler\cite{fowler1986}.

As the growth of a species relies on the food source, hence food habit of a species also plays a prominent role in population density. While several species rely upon a particular food source, generalist predators can feed on a wide range of food varieties and survive a severe change in environmental conditions, which distinguishes them from others. Hence, these can switch to a different food source when their favourite food is not abundant. North American raccoons is a good example of generalist predators\cite{natgeo}. They are found in a wide variety of environments like forests, mountains and cities. These omnivore species can feed on almost everything from fruit and nuts to insects, frogs, eggs etc.

The paper is organized as follows. Section \ref{secmodel} shows the formulation of the mathematical model and section \ref{secprelim} contains some preliminary results, which include the positivity and the boundedness of the system when no delay is involved. Section \ref{seclyap} contains the Lyapunov stability analysis of the model. Hopf-bifurcation is shown in section \ref{BA} in the presence of the delay. In section \ref{secest}, the length of the delay parameter is obtained to preserve stability. Section \ref{secnum} illustrates the numerical simulations. In section \ref{secind}, the stability of the controlled model is studied by the method of direct control. Finally, section \ref{seccon} represents the conclusions and discussions.

\section{The Model} \label{secmodel}
We propose a two-dimensional mathematical model which includes the following enlisted assumptions for modelling our predator-prey system. The assumptions are as follows:
\begin{enumerate}
\item[(a)] the environment has a carrying capacity,
\item[(b)] growth in prey species involves maturation delay,
\item[(c)] predator's functional response represents group defence in the prey species,
\item[(d)] the predator species reproduces sexually,
\item[(e)] the functional response of the generalist predator species is modelled by the modified Leslie-Gower scheme.
\end{enumerate}
Though in a real ecological system, there may be many preys and predators: among predators some are specialists and generalists, yet to capture the effect of a generalist predator on a particular prey we have taken a two-dimensional system where there is loss to predator species due to severe scarcity of the prey species.

Although it is witnessed that both monotonic\cite{ajraldi2011,braza2012,djilali2019} and non-monotonic  functional responses\cite{kumarkumari2021,andrews1968,sokol1981} are used to model group defence in a prey population, as observed from examples of musk oxen-wolves population\cite{tener1965}, Daphnia-algae population\cite{davidowicz1988} and several others\cite{holmes1972,andrews1968, edwards1970}, non-monotonic functional responses seem to be best suited for modelling group defence. These functional responses monotonically increase up to some critical value and then monotonically decrease.

Let us consider the simplified Monod-Haldane function, also known as simplified Holling type IV,
\begin{equation}\label{monod_haldane}
F(x)=\frac{mx}{x^2+c},
\end{equation}
which is non-monotonic and used for modelling group defence by Xiao and Ruan\cite{xiao2001}.

As discussed in Xiao and Ruan\cite{xiao2001}, a functional response, representing group defence, must have the following characteristics:
\begin{eqnarray}\label{group_defence_conds}
F:[0,\infty)\mapsto\mathbb{R},\; F \text{ is continuously differentiable},\nonumber\\
F(0)=0,\; F(x)>0 \;\&\; F(x)\leq M \text{ for } \forall x>0,\\
\text{ and } \exists \gamma>0\text{ s.t. } F'(x)\begin{cases}
>0, & x<\gamma,\\
<0, & x>\gamma.
\end{cases}
\nonumber
\end{eqnarray}
The above simplified Monod-Haldane function satisfies these characteristics. Let us define a functional response
\begin{equation}\label{generalized_monod_haldane}
F(x)=\frac{mx}{x^p+c},\quad p>1,
\end{equation}
then the function satisfies all the criteria in (\ref{group_defence_conds}) and hence is suitable for modelling group defence. Here, $F(x)$ is non-monotonic. We consider this response function for our predator-prey model.

In a predator-prey model, Leslie\cite{leslie1948} first proposed that the carrying capacity of the predators increases as the prey population increases. Moreover, the carrying capacity is a multiple of the prey population. Starting with a logistic form for a predator population with carrying capacity proportional to prey abundance, Aziz-Alaoui\cite{aziz2003} has derived the equation for a generalist predator where an extra parameter is added to the response term that measures the residual loss of predator species when prey population is scarce. This is called the modified Leslie-Gower scheme. Hence, the functional response of generalist predators is modelled by the modified Leslie-Gower functional response\cite{batabyal2020}.

Incorporating the above considerations, the predator-prey model can be represented as follows:
\begin{eqnarray}\label{sysnd}
\frac{dX}{dT}&=&RX\left(1-\frac{X}{K}\right)-\frac{MXY}{X^p+C},\nonumber\\
\frac{dY}{dT}&=&\left(D-\frac{E}{X+A}\right)Y^2.
\end{eqnarray}
where $X(T)$ and $Y(T)$ are respectively the densities of the prey and predator species at time $T$. The parameters used in the system (\ref{sysnd}) bear the following meanings:
\begin{align*}
R~= &\text{ Intrinsic growth rate of the prey species }X\\
 K~= &\text{ Environmental carrying capacity for the preys} \\
 M~= &\text{ Maximum predation rate}  \\
 C~= &\text{ The protection provided to the prey population by the environment} \\
 D~= &\text{ Reproduction rate of the generalist predator by sexual reproduction}  \\
 E~= &\text{ Maximum rate of death of predator population} \\
 A~= &\text{ Residual loss of predator species } Y \text{ due to severe scarcity of prey specis } X
\end{align*}

Newly born individuals in a species  do not instantaneously contribute to the growth of the species as they are not mature enough to participate in breeding. Assuming all individuals in a species have the same maturation period, we consider a discrete delay parameter $\tau$ to represent the maturation delay\cite{murray2002,alfifi2021,fowler1986} for the prey species. As a piece of experimental evidence, maturation delay in populations can be seen in round rays population. In Heller's round rays, individuals attain sexual maturity nearly at the age of 3.8 years. In the case of spinytail round rays, males and females mature approximately at 2.3 years, whereas Roger's round rays mature at an early age of 1 year\cite{morales2021}.\\

After introducing the delay term to the system, the model is given by,
\begin{eqnarray}\label{sysOriginal}
\frac{dX}{dT}&=&RX\left(1-\frac{X(T-\tau)}{K}\right)-\frac{MXY}{X^p+C},\nonumber\\
\frac{dY}{dT}&=&\left(D-\frac{E}{X+A}\right)Y^2,
\end{eqnarray}
with initial conditions:
\begin{eqnarray}\label{initialsOriginal}
X(\Theta)&=&\psi_1(\Theta)>0,\nonumber\\
Y(\Theta)&=&\psi_2(\Theta)>0,\quad \Theta \in [-\tau, 0); \psi_i(0)>0, i=1, 2.
\end{eqnarray}

It is assumed that the growth of the predator species is due to sexual reproduction. Hence the rate of change of population density will be directly proportional to the number of male and female individuals in the species. Hence the predator equation will have a growth term which is a scalar multiple of $Y^2$.

\noindent Now, let us consider, the  non-dimensional variables
\begin{center}
$x=\dfrac{X}{K}$, $y=\dfrac{Y}{K}$ and $t=RT$.
\end{center}

We use the above transformations on the system (\ref{sysOriginal}) to reduce the system to non-dimensionalised form and the reduced predator-prey model is given by,
\begin{eqnarray}\label{eq:sys}
\frac{dx}{dt}&=&x\left(1-x(t-\varrho)\right)-\frac{mxy}{x^p+c},\nonumber\\
\frac{dy}{dt}&=&\left(d-\frac{e}{x+a}\right)y^2,
\end{eqnarray}
subject to the initial conditions:
\begin{eqnarray}\label{eq:initials}
x(\theta)&=&\phi_1(\theta)>0,\nonumber\\
y(\theta)&=&\phi_2(\theta)>0,\quad \theta \in [-\varrho, 0); \phi_i(0)>0, i=1, 2,
\end{eqnarray}
where $\varrho$ is the logistic delay, and\\
\begin{center}
$\varrho=R\tau$, $m=\dfrac{M}{RK^{p-1}}$, $c=\dfrac{C}{K^p}$, $d=\dfrac{DK}{R}$, $e=\dfrac{E}{R}$ and $a=\dfrac{A}{K}$.
\end{center}

\section{Preliminary Results}\label{secprelim}
In this section, we shall discuss the positivity and boundedness of the solutions of the system (\ref{eq:sys}) and and boundedness of the solutions of the system in absence of the delay.

\noindent The system (\ref{eq:sys}) has one interior equilibrium point which is $(x^*,y^*)$, where
$$x^*=\frac{e}{d}-a~\text{ and }~y^*=\frac{1}{m}(1-x^*)(x^{*^p}+c).$$
\noindent So, the equilibrium point exists when $0<e/d-a<1$. As we are interested in the study of stability of the equilibrium point, so we assume this condition holds, if not mentioned else.
\subsection{Positivity}
\begin{lemma} The positive quadrant $int(R_+^2)$ is invariant for system (\ref{eq:sys}).
\end{lemma}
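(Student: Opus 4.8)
The plan is to show that an orbit issued from $int(R_+^2)$ cannot reach the boundary in finite time, by recasting each equation of (\ref{eq:sys}) in integrating-factor (exponential) form, whose positivity is manifest. Since (\ref{eq:sys}) is a delay system, I would argue interval by interval via the method of steps: first on $[0,\varrho]$, where the retarded argument $x(t-\varrho)$ coincides with the prescribed history $\phi_1(t-\varrho)$, and then successively on $[\varrho,2\varrho],[2\varrho,3\varrho],\dots$, where the delayed term is supplied by the (already constructed, positive) solution from the previous step. On the maximal interval of existence the solution is continuous, and the two denominators never vanish, because $c=C/K^p>0$ and $a=A/K>0$ give $x^p+c\ge c>0$ and $x+a\ge a>0$ whenever $x\ge 0$.

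For the prey equation I would factor out $x$ and write
$$\frac{dx}{dt}=x\,g(t),\qquad g(t):=\bigl(1-x(t-\varrho)\bigr)-\frac{m\,y(t)}{x(t)^p+c},$$
which integrates to
$$x(t)=x(0)\exp\!\left(\int_0^t g(s)\,ds\right).$$
Because $x(0)=\phi_1(0)>0$ and the exponential is strictly positive, $x(t)>0$ throughout the existence interval, regardless of the sign of $g$. For the predator equation I would peel off a single factor of $y$, writing
$$\frac{dy}{dt}=y\,h(t),\qquad h(t):=\Bigl(d-\frac{e}{x(t)+a}\Bigr)y(t),$$
so that
$$y(t)=y(0)\exp\!\left(\int_0^t h(s)\,ds\right)>0,$$
again by positivity of $y(0)=\phi_2(0)$ and of the exponential. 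Retaining one power of $y$ inside the integrating factor (rather than separating the full $y^2$) is precisely what makes this yield positivity directly, sidelining the finite-time blow-up that separation of variables would otherwise surface.

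The main point needing care is the interplay between the two exponents and the delay: $g$ contains $y$ while $h$ contains $x$, so the formulas are only as strong as the a priori regularity of the solution. I would close this by observing that on each step $[n\varrho,(n+1)\varrho]$ the solution is continuous, hence bounded, the retarded term is a known continuous function from the previous step, and the denominators are bounded away from zero by the estimates above; thus both integrands are continuous and their integrals finite on every compact subinterval. Consequently neither $x$ nor $y$ can vanish in finite time.

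It follows that a trajectory starting in $int(R_+^2)$ remains in $int(R_+^2)$. Equivalently, the coordinate axes $\{x=0\}$ and $\{y=0\}$ are themselves invariant for (\ref{eq:sys}) — since $dx/dt=0$ when $x=0$ and $dy/dt=0$ when $y=0$ — so by uniqueness of solutions no interior orbit can cross them, which is the assertion of the lemma.
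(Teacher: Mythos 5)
Your proposal is correct and follows essentially the same route as the paper: the paper's proof consists precisely of the two exponential (integrating-factor) representations $x(t)=x(0)\exp\bigl[\int_0^t\bigl\{1-x(s-\varrho)-\frac{my}{x^p+c}\bigr\}ds\bigr]$ and $y(t)=y(0)\exp\bigl[\int_0^t\bigl(d-\frac{e}{x+a}\bigr)y\,ds\bigr]$, from which positivity is read off from $x(0),y(0)>0$. Your additional scaffolding --- the method of steps to handle the retarded argument, the observation that the denominators are bounded below by $c$ and $a$, and keeping one factor of $y$ in the exponent --- merely makes explicit the standard justifications the paper leaves implicit, so it is a more careful write-up of the same argument rather than a different one.
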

\begin{proof} Here we want to show that $\forall t\in [0, A)$ and $A\in (0, +\infty)$, $x(t)> 0$ and $y(t)>0$. Now solving the system (\ref{eq:sys}) we have 
\begin{eqnarray}\label{eq:positive}
x(t)&=& x(0)~exp\left[\int_{0}^{t}\left\{1-x(s-\varrho) -\frac{my}{x^p+c}\right\} ds \right],\nonumber\\
y(t)&=& y(0)~exp\left[\int_{0}^{t}\left(d-\frac{e}{x+a}\right)y ds \right].
\end{eqnarray}
$\therefore$ $x(t)>0$ and $y(t)>0$ as $x(0)>0$ and $y(0)>0$.\\
Thus we see that the system (\ref{eq:sys}) has positive solution with the positive initial condition given in (\ref{eq:initials}). Thus the positive quadrant $int(R_+^2)$ is invariant \cite{sA3}.
\end{proof}
\subsection{Boundedness of system (\ref{eq:sys}) when $\varrho=0$}
\begin{theorem}\label{theorem1}
The solutions of the system (\ref{eq:sys}) for $\varrho=0$, originating in $\mathbb{R}^2_+$, are bounded, provided the conditions $ \mu<\dfrac{dx_*}{x_*^p+c} $ and $ d-\dfrac{e}{x_*+a}<\dfrac{d}{m}\left(\dfrac{x_*}{y_*}\right)^2 $ hold, where $(x_*,y_*)$ is the interior equilibrium point and $\mu=\min \{m,e\}$.
\end{theorem}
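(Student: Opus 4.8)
The plan is to establish \emph{ultimate boundedness} by a Lyapunov/comparison argument, treating the two state variables in turn. First I would dispose of the prey variable, which is the easy part. Since $y>0$ and the predation term $\tfrac{mxy}{x^p+c}$ is nonnegative on $\mathrm{int}(\mathbb{R}_+^2)$, the prey equation in (\ref{eq:sys}) with $\varrho=0$ satisfies $\dot x \le x(1-x)$. Comparing with the logistic flow $\dot u = u(1-u)$ via the standard comparison theorem yields $\limsup_{t\to\infty} x(t)\le 1$ and $x(t)\le \max\{1,x(0)\}=:\bar x$ for every $t\ge 0$. Thus $x$ admits an a priori bound that does not use either hypothesis.

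The real content is the predator bound, and the difficulty is that the predator growth is \emph{quadratic}: $\dot y=(d-\tfrac{e}{x+a})\,y^2$. Whenever $x>x^*=e/d-a$ the bracket is positive and $\dot y\sim(\text{positive})\,y^2$, which in isolation forces finite-time blow-up. Consequently no naive linear functional $W=\alpha x+\beta y$ can succeed: the term $\beta d\,y^2$ is never dominated by the merely linear-in-$y$ predation loss appearing in $\dot x$. The only mechanism preventing escape is the coupling: a large $y$ makes $\dot x$ strongly negative, drives $x$ below $x^*$, and thereby flips the sign of the bracket so that $y$ decreases. The proof must make this feedback quantitative, and that is exactly where the two stated hypotheses enter.

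Accordingly I would attempt a Lyapunov function adapted to the interior equilibrium, of the form $$W(x,y)=\left(x-x_*-x_*\ln\tfrac{x}{x_*}\right)+\tfrac{m}{d}\left(y-y_*-y_*\ln\tfrac{y}{y_*}\right),$$ which is nonnegative, radially unbounded, and vanishes only at $(x_*,y_*)$. Differentiating along (\ref{eq:sys}) with $\varrho=0$ and using the equilibrium identities $1-x_*=\tfrac{m y_*}{x_*^p+c}$ and $\tfrac{e}{x_*+a}=d$, together with the resulting factorization $d-\tfrac{e}{x+a}=\tfrac{e\,(x-x_*)}{(x+a)(x_*+a)}$, the derivative $\dot W$ collapses to a genuinely negative diagonal part in $(x-x_*)^2$ together with indefinite cross terms in $(x-x_*)(y-y_*)$ (one of which carries an extra factor $y$ coming precisely from the $y^2$ growth). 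The inequalities $\mu<\tfrac{dx_*}{x_*^p+c}$ with $\mu=\min\{m,e\}$ and the stated bound on $d-\tfrac{e}{x_*+a}$ are then invoked to control the coefficients of these cross terms — combined with the a priori bound $x\le\bar x$ to tame the rogue $y$-factor — so as to produce an estimate of the form $\dot W\le -\eta W+K$ for constants $\eta,K>0$.

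The concluding step is routine: Gronwall's inequality applied to $\dot W\le -\eta W+K$ gives $W(t)\le \tfrac{K}{\eta}+\big(W(0)-\tfrac{K}{\eta}\big)e^{-\eta t}$, so $W$ is bounded for all $t\ge 0$; radial unboundedness of $W$ then confines $(x(t),y(t))$ to a compact set, which is the assertion. I expect the main obstacle to be controlling the quadratic self-growth $d\,y^2$ of the predator — that is, verifying that $\dot W$ is sign-definite for large $\|(x,y)\|$ in spite of the indefinite cross terms, since the $y^2$ term breaks the clean quadratic-form cancellation one enjoys in the linear-growth Leslie--Gower setting. This is the step where both hypotheses are indispensable and where a careful completion-of-squares estimate of the $x$–$y$ interaction, anchored by the a priori prey bound, must be carried out.
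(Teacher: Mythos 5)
Your prey bound is exactly the paper's first step, but the predator half of your plan contains a genuine gap: the pointwise estimate $\dot W\le-\eta W+K$ that your Gronwall step requires is false for your choice of $W$. Compute the predator contribution explicitly: with $W_2=\frac{m}{d}\bigl(y-y_*-y_*\ln\frac{y}{y_*}\bigr)$ and the identity $e/(x_*+a)=d$,
\begin{equation*}
\dot W_2=\frac{m}{d}\cdot\frac{y-y_*}{y}\left(d-\frac{e}{x+a}\right)y^2=\frac{me}{d}\cdot\frac{(x-x_*)\,y\,(y-y_*)}{(x+a)(x_*+a)},
\end{equation*}
which on the strip $\{x_*<x\le\bar x,\ y\ \text{large}\}$ behaves like $C(x-x_*)\,y^2>0$, i.e.\ grows quadratically in $y$, whereas $W$ itself grows only linearly in $y$, so $-\eta W+K$ is affine in $y$. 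No completion of squares can repair this, since the only competing negative cross term, $-\frac{m}{x^p+c}(x-x_*)(y-y_*)$ from the prey factor, is merely linear in $y$. The a priori bound $x\le\bar x$ does not help: it does not force $x\le x_*$, and neither do the theorem's hypotheses — they are evaluated at the equilibrium, and the second one is in fact vacuous there, because $d-\frac{e}{x_*+a}=0$ by the very definition of $x_*$, so it gives no pointwise control of the $y^2$ coefficient where $x>x_*$. The feedback mechanism you invoke (large $y$ drives $x$ below $x_*$, flipping the sign of the bracket) is a statement about trajectories over time and is invisible to a pointwise differential inequality for this $W$. A second soft spot: your claim that $\dot W$ has a ``genuinely negative diagonal part in $(x-x_*)^2$'' is wrong in general — the equilibrium identity leaves, besides $-(x-x_*)^2$, the positive term $\frac{my_*(x^p-x_*^p)(x-x_*)}{(x_*^p+c)(x^p+c)}$ produced by the non-monotone response, and under the stated hypotheses the equilibrium can indeed be unstable and surrounded by a stable limit cycle (see Fig.~\ref{fig_c}), so no sign-definiteness near $E$ is available.

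The paper's own route is different in exactly the place where your sketch breaks down: after the identical logistic bound $x(t)\le 1$, it partitions $\mathbb{R}_+^2$ by the two nullclines into three regions, disposes of the regions where $y'<0$ or $x'>0$ by direct sign arguments, and only in the remaining region (where $x'<0$, $y'>0$) applies Gronwall to the \emph{linear} functional $\sigma=\frac{d}{m}x+y$ via $\sigma'+\mu\sigma\le\frac{d}{m}(1+\mu)$, the quadratic growth being packaged into a coefficient $A_2=d-\frac{e}{x+a}-\frac{d}{m}(x/y)^2$ that the hypotheses are asked to keep negative. Note that this is precisely the kind of linear combination your second paragraph declares hopeless; the paper makes it workable (to the extent that it is) only by restricting it to one region rather than using it globally. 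So the missing idea relative to the paper is the nullcline-based region decomposition, not a globally dissipative Lyapunov function anchored at the equilibrium, and your proposal as written cannot be completed into a proof.
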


\begin{proof}
\noindent The system (\ref{eq:sys}) with $\varrho=0$ is given by
\begin{eqnarray}\label{nodelay_sys}
\frac{dx}{dt} &=& x(1-x)-\frac{mxy}{x^p+c},\nonumber\\
\frac{dy}{dt} &=& \left(d-\frac{e}{x+a}\right)y^2.
\end{eqnarray}
\noindent From the system (\ref{nodelay_sys}), we can write,
\begin{eqnarray}\label{4}
 \frac{dx}{dt}  \le x(1-x)
\implies & x(t)  \le 1=M_1.
\end{eqnarray}
So, $x(t)$ is bounded for all $t$.

Now, we need to show that $y(t)$ is bounded in $\mathbb{R}^2_+$. From system (\ref{nodelay_sys}), we observe, there are two nullclines in $\mathbb{R}_+^2$: $$1-x-\frac{my}{x^p+c}=0 \text{ and } x=\frac{e}{d}-a.$$
Considering the signs of $x'(t)$ and $y'(t)$ in the regions formed by the nullclines, $\mathbb{R}_+^2$ can be divided into three regions. Let region-I is the region in $\mathbb{R}_+^2$ where $y'<0$, region-II is the region where $x',y'>0$ and the portion of $\mathbb{R}_+^2$ where $x'<0$ \& $y'>0$ is region-III. 

\begin{figure}[H]
\includegraphics[width=0.6\textwidth, height=6cm]{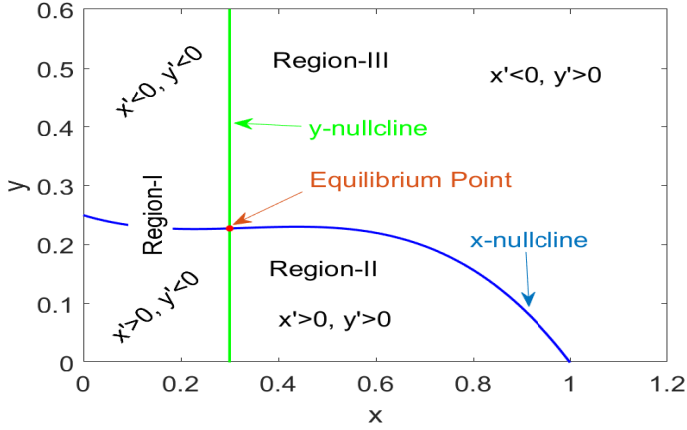}
\caption{The figure shows the nullclines of system (\ref{eq:sys}) in $\mathbb{R}_+^2$ region with $\varrho=0$ where the intersection of the nullclines is the positive equilibrium point $(x^*,y^*)$.}
\label{fig_nullclines}
\end{figure}

As $y'<0$ in region-I and $\mathbb{R}^2_+$ is an invariant space, so $y(t)$ will be bounded in region-I. $x(t)$ is bounded in $\mathbb{R}_+^2$, so $(1-x)(x^p+c)$ is bounded by some positive number, say $\delta$. As $x'>0$ in region-II, so we have,
\begin{eqnarray*}
1-x-\frac{my}{x^p+c}>0\implies y<\frac{\delta}{m}.
\end{eqnarray*}
Hence, $y(t)$ is bounded in region-II.

\noindent In region-III, $x'<0$ and $y'>0$. Let us define a quantity $\sigma(t)$ such that,
$$\sigma(t)=\frac{d}{m} x(t)+y(t).$$
Then from the expression of $\sigma(t)$ and its time derivative $\sigma'(t)$, we can obtain,
\begin{equation*}
\sigma'+\mu \sigma = \frac{d}{m}(1+\mu)\; x(t)+A_1y+A_2y^2,
\end{equation*}
where, $A_1=\mu-\dfrac{dx}{x^p+c},\text{ and } A_2=d-\dfrac{e}{x+a}-\frac{d}{m} \left(\frac{x}{y}\right)^2$ and $\mu=\min\{m,e\}$.

\noindent Now, if we choose, $ \mu<\dfrac{dx_*}{x_*^p+c} $ and $ d-\dfrac{e}{x_*+a}<\dfrac{d}{m}\left(\dfrac{x_*}{y_*}\right)^2 $, then $A_1<0$ and $A_2<0$, so
\begin{equation*}
\sigma'+\mu \sigma \le \frac{d}{m}(1+\mu)x(t)\le \frac{d}{m}(1+\mu)=M_2\mu>0.
\end{equation*}
Following \cite{kaviya2021}, by applying Gronwall's inequality, as $t\rightarrow\infty$,
\begin{equation}\label{5}
0<\sigma(t)\leq M_2 \quad\text{i.e,}\quad 0<y(t)\leq M_2,
\end{equation}
where $M_2 = \frac{d}{m\mu} (1+\mu)$. Hence, $y(t)$ has upper bound in region III.\\
Hence from (\ref{4}) $\&$ (\ref{5}), we have 
$$0< x(t)\leq M_1,$$
$$0< y(t)\leq M_2,$$
with positive initial condition, i.e. $x(0)>0$, $y(0)>0$.\\
As $x(t)$ is bounded in $\mathbb{R}^2_+$ and $y(t)$ is bounded in all three regions of $\mathbb{R}^2_+$, hence, the system (\ref{nodelay_sys}) is bounded in $\mathbb{R}^2_+$, when the given conditions hold.
\end{proof}

\noindent In the presence of delay, local stability of system (\ref{eq:sys}) is presented below.
\section{Lyapunov Stability Analysis}\label{seclyap}
Here,we have studied the local stability of (\ref{eq:sys}) by using a suitable Lyapunov functional as done in \cite{sA1}. The system (6) has positive equilibrium point $E=(x^*, y^*)$. Introducing the new set of variables $\bar{x}=x-x^*$ and $\bar{y}=y-y^*$ in system (\ref{eq:sys}),
we get the following linearised system as:
\begin{eqnarray}\label{eq:23}
\frac{du}{dt}&=&a_{11}\bar{x}+a_{12}\bar{y},\nonumber\\
\frac{d\bar{y}}{dt}&=&a_{22}\bar{x},
\end{eqnarray}
where, $u=\bar{x}-x^*\int_{t-\varrho}^{t}\bar{x}(s)ds$, \quad $a_{11}=\frac{mpx^{*^p}y^*}{(x^{*^p}+c)^2}$, \quad $a_{12}=-\frac{mx^*}{x^{*^p}+c}$ and $a_{22}=\frac{d{y^*}^2}{e}$.\\
Now following the steps as in \cite{sA1}, we shall check the stability of the system by assuming a suitable Lyapunov function $w(v)(t)$ as follows:
\begin{eqnarray}\label{eq:24}
w(v)(t)=k_1w_1(v)(t)+k_2w_2(v)(t)+k_3w_3(v)(t),
\end{eqnarray}
where, 
\begin{eqnarray}
w_1(v)(t)&=&u^2+x^*(a_{11}+a_{12})\int_{t-\varrho}^{t}\int_{s}^{t}\bar{x}^2(l) dl ds, \nonumber\\
w_2(v)(t)&=&\bar{y}^2,\nonumber\\
w_3(v)(t)&=&u\bar{y}+\frac{a_{22}x^*}{2}\int_{t-\varrho}^{t}\int_{s}^{t}\bar{x}^2(l) dl ds, \nonumber
\end{eqnarray}
and 
\begin{eqnarray}
k_1&=&2a_{12}\varrho-a_{12},\nonumber\\
k_2&=&a_{22}\left(1+\frac{\varrho}{2}\right) ,\nonumber\\
k_3&=&(a_{11}+a_{22})\varrho. \nonumber
\end{eqnarray}
As all the parameters are assumed positive so, $k_1>0$, $k_2>0$, $k_3>0$ and $w(v)(t)>0$. Taking the derivative of (\ref{eq:24}), we get
\begin{eqnarray}\label{eq:25}
\frac{d}{dt}w(v)(t)\leq \Lambda_1 \bar{x}^2+\Lambda_2 \bar{y}^2,
\end{eqnarray}
where, 
\begin{eqnarray}
\Lambda_1 &=&k_1\left\{2a_{11}-a_{11}x^*-x^*(a_{11}+a_{12})\varrho \right\}+k_3\frac{a_{22}x^*}{2}(\varrho-1) ,\nonumber\\
\Lambda_2 &=&a_{12}(k_3-k_1x^*).\nonumber
\end{eqnarray}
Further simplifying the conditions $\Lambda_1<0$, $\Lambda_2<0$, we have
\begin{eqnarray}\label{delay_lyap_conds}
\varrho>\pi_o\quad \text{and}\quad \pi_1\varrho^2+\pi_2\varrho+\pi_3>0,
\end{eqnarray}
where expressions for $\pi_i$'s, for $i=0,1,2$ \& $3$, are  given by,
$$\pi_0=\dfrac{a_{12}x^*}{2a_{12}x^*-a_{11}-a_{22}},\quad \pi_1=x^*\left(2a_{11}a_{12}+2a_{12}^2-\frac{1}{2}a_{11}a_{22}-\frac{1}{2}a_{22}^2\right),$$
$$\pi_2=a_{11}a_{12}x^*-a_{12}^2x^*-4a_{11}a_{12}, \quad \pi_3=a_{11}a_{12}(2-x^*)-\frac{1}{2}a_{22}x^*(a_{11}+a_{22}).$$
\begin{theorem}\label{th:2}
If the value of the delay $\varrho$ satisfies the conditions in (\ref{delay_lyap_conds}) then the interior equilibrium point $E(x^*, y^*)$ of (\ref{eq:sys}) is locally asymptotically stable.
\end{theorem}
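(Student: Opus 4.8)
The plan is to invoke the Lyapunov--Krasovskii stability theorem for the linearised delay system (\ref{eq:23}), taking the functional $w(v)(t)$ assembled in (\ref{eq:24}) as the candidate. The argument has three stages: establishing that $w$ is positive definite, differentiating it along trajectories to obtain the bound (\ref{eq:25}), and verifying that the hypotheses (\ref{delay_lyap_conds}) are exactly what forces $\Lambda_1<0$ and $\Lambda_2<0$.

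First I would confirm positive definiteness. The weights $k_1,k_2,k_3$ are positive under the standing parameter assumptions, and each component is built from a square ($u^2$, $\bar y^2$) together with the non-negative double integrals $\int_{t-\varrho}^t\!\int_s^t \bar x^2(l)\,dl\,ds$; only the cross term $u\bar y$ in $w_3$ is a priori indefinite. The subtlety is that $w_1$ controls $u^2$ rather than $\bar x^2(t)$, so I would use the relation $u=\bar x-x^*\int_{t-\varrho}^t\bar x(s)\,ds$ together with the Cauchy--Schwarz estimate $\big|\int_{t-\varrho}^t\bar x(s)\,ds\big|^2\le\varrho\int_{t-\varrho}^t\bar x^2(s)\,ds$ and the accompanying double integrals to recover a genuine lower bound of the form $w\ge\kappa\big(\bar x^2(t)+\bar y^2(t)\big)$ near the origin, as required by the Lyapunov--Krasovskii theorem.

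The central computation is the time derivative of $w$ along (\ref{eq:23}). I would differentiate the double integrals by Leibniz's rule, so that each $\int_{t-\varrho}^t\!\int_s^t \bar x^2(l)\,dl\,ds$ contributes $\varrho\,\bar x^2(t)-\int_{t-\varrho}^t \bar x^2(s)\,ds$, and substitute $\dot u=a_{11}\bar x+a_{12}\bar y$ and $\dot{\bar y}=a_{22}\bar x$ into the algebraic terms. The memory-carrying cross terms, namely those built from $u-\bar x=-x^*\int_{t-\varrho}^t\bar x(s)\,ds$ and from the product $u\bar y$, are handled by the elementary inequality $2ab\le a^2+b^2$. With the specific choice of weights $k_1,k_2,k_3$, the single integrals $\int_{t-\varrho}^t\bar x^2(s)\,ds$ produced by these bounds cancel exactly against the negative single integrals coming from Leibniz differentiation of the double-integral terms, collapsing the estimate into the clean quadratic form $\dot w\le\Lambda_1\bar x^2+\Lambda_2\bar y^2$ with $\Lambda_1,\Lambda_2$ as displayed.

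It then remains to match the sign requirements to (\ref{delay_lyap_conds}). Since $a_{12}<0$, the condition $\Lambda_2=a_{12}(k_3-k_1x^*)<0$ is equivalent to $k_3>k_1x^*$, which after substituting $k_1=a_{12}(2\varrho-1)$ and $k_3=(a_{11}+a_{22})\varrho$ rearranges precisely to $\varrho>\pi_0$; and $\Lambda_1<0$, after inserting the $k_i$, reduces to the quadratic inequality $\pi_1\varrho^2+\pi_2\varrho+\pi_3>0$. Under both conditions $\dot w$ is negative definite, vanishing only at the origin, so the Lyapunov--Krasovskii theorem delivers local asymptotic stability of $E(x^*,y^*)$. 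I expect the crux to be the bookkeeping in the derivative computation: the delay integrals must \emph{cancel} exactly, not merely be dominated, and this exact cancellation is what pins down the particular weights $k_1,k_2,k_3$ quoted in the statement.
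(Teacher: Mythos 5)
Your proposal is correct and takes essentially the same route as the paper: the paper builds the identical functional $w=k_1w_1+k_2w_2+k_3w_3$, records the derivative bound (\ref{eq:25}) with the same $\Lambda_1,\Lambda_2$, and its proof of Theorem \ref{th:2} simply defers to \cite{sA1} for precisely the mechanical steps you spell out (Leibniz differentiation of the double integrals, $2ab\le a^2+b^2$ on the cross terms with exact cancellation of the single delay integrals, and matching $\Lambda_2<0$ to $\varrho>\pi_0$ and $\Lambda_1<0$ to the quadratic condition). One shared caveat worth noting: since $a_{12}<0$, the claim that $k_1=a_{12}(2\varrho-1)$ is positive ``under the standing parameter assumptions'' actually requires $\varrho<1/2$ (a restriction absent from (\ref{delay_lyap_conds}), though compatible with it, as $\pi_0<1/2$ always holds), so your Cauchy--Schwarz lower-bound step for positive definiteness is genuinely needed and is, if anything, more careful than the paper's bare assertion $w(v)(t)>0$.
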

\begin{proof}
Following the steps as done in \cite{sA1}, one can easily prove Theorem \ref{th:2}.
\end{proof}
\section{Bifurcation analysis}\label{BA} 
In this section we shall discuss the Hopf-bifurcation for the system (\ref{eq:sys}). Let the equilibrium point be $E(x^*, y^*)$. Letting $\bar{x}=x-x^*$ and $\bar{y}=y-y^*$ and substituting into Eq. (\ref{eq:sys}) we get the linearised form as:
\begin{eqnarray}\label{BA:1}
\frac{d\bar{x}}{dt}&=&a_{11}\bar{x}-x^* e^{-\lambda \varrho}\bar{x}+a_{12}\bar{y},\nonumber\\
\frac{d\bar{y}}{dt}&=&a_{22}\bar{x},
\end{eqnarray}
where the expressions for $a_{11}$, $a_{12}$ and $a_{22}$ as mentioned in section \ref{seclyap}. The characteristic equation for the system (\ref{BA:1}) as given below
\begin{eqnarray}\label{BA:2}
\left\{\lambda^2-a_{11}\lambda+a_{11}a_{22}\right\}+e^{-\lambda\varrho}\left\{x^*\lambda-x^*a_{12}\right\}=0.
\end{eqnarray}
Let $\lambda=i\omega(>0)$ then from (\ref{BA:2}) separating real and
imaginary part we get
\begin{eqnarray}\label{BA:3}
x^*\omega \sin(\omega\varrho)-x^* a_{12}\cos(\omega\varrho)&=&\omega^2-a_{11}a_{22},\nonumber\\
x^* a_{12}\sin(\omega\varrho)+x^*\omega \cos(\omega\varrho)&=&\omega(a_{11}+a_{22}),
\end{eqnarray}
which gives
\begin{eqnarray}\label{BA:4}
\omega^4+\omega^2\left(a_{11}^2+a_{22}^2-x^{*^2}\right)+a_{12}^2(a_{11}^2-x^{*^2})=0.
\end{eqnarray}
The equation (\ref{BA:4}) will have positive root if 
\begin{eqnarray}\label{BA:5}
a_{11}^2>x^{*^2}.
\end{eqnarray}
Now we eliminate $\sin(\omega\varrho)$ from (\ref{BA:3}) we have 
\begin{eqnarray}\label{BA:6}
\cos(\omega\varrho)=\frac{\omega^2(a_{11}+2a_{22})-a_{11}a_{22}^2}{x^*(\omega^2-a_{12}^2)}.
\end{eqnarray}
Let, $\omega=\omega_0$ be a positive root of (\ref{BA:4}), then
\begin{eqnarray}\label{BA:7}
\varrho_{n}^{*}=\frac{1}{\omega_0}\left[\arccos\frac{\omega_0^2(a_{11}+2a_{22})-a_{11}a_{22}^2}{x^*(\omega_0^2-a_{12}^2)}+2n\pi \right],\quad n=0, 1, 2,....
\end{eqnarray}
We define the function $\theta(\varrho)\in [0, 2\pi)$, such that $\cos \theta(\varrho)$ is given by the right hand side of (\ref{BA:7}). Then solving $$S_n(\varrho)=\varrho-\varrho_{n}^{*}$$ we get the $\varrho$, at which stability switching occurs. If $\lambda(\varrho)$ be the root of the characteristic equation (\ref{BA:2}) satisfying $\Real \lambda(\varrho_{n}^{*})=0$ and $\Imag \lambda(\varrho_{n}^{*})=\omega_0$, we get $$\left(\frac{d}{d\varrho}Re\lambda\right)_{\varrho=\varrho_0^*}\neq 0.$$ \\
Differentiating both sides of the equation (\ref{BA:2}) w.r.t. $\varrho$,
\begin{equation}
\left(\frac{d\lambda}{d\varrho}\right)^{-1}=-\frac{2\lambda-a_{11}+x^*\e^{-\lambda\varrho}}{\lambda^3-a_{11}\lambda^2+a_{11}a_{22}\lambda}-\frac{\varrho}{\lambda}.
\end{equation}
Putting $\lambda=i\omega_0$ and $\varrho=\varrho_0^*$,
\begin{equation}
\text{Sign}\left[\Real\left(\frac{d\lambda}{d\varrho}\right)^{-1}\right]_{\lambda=i\omega_0,\;\varrho=\varrho_0^*}=\frac{1}{\omega_0^2}\text{Sign}\left[\frac{2w_0^2}{w_0^2-a_{11}a{22}}\right].
\end{equation}
Now if $\omega_0^2-a_{11}a_{22}>0$, then $\text{Sign}\left[\Real\left(\frac{d\lambda}{d\varrho}\right)^{-1}\right]_{\lambda=i\omega_0}>0$, i.e,
\begin{equation}
\frac{d}{d\varrho}(\Real \lambda)>0.
\end{equation}
Thus, we see that the transversality condition holds and hence system (\ref{eq:sys}) undergoes a Hopf-bifurcation for $\varrho=\varrho_0^*$.
l\section{Estimation of the length of delay to preserve stability}\label{secest}
Here, we consider the system (\ref{eq:sys}) and the set of all continuous real functions defined on the domain $[-\varrho,\infty)$ with the positive initial conditions defined in (\ref{eq:initials}) on $[-\varrho,0]$. Linearizing the system (\ref{eq:sys}) around the origin after substituting $v_1=x-x^*$ and $v_2=y-y^*$, we get,
\begin{eqnarray}\label{dl:linsys}
\frac{dv_1}{dt} &=& -b_{11}v_1(t-\varrho)+b_{12}v_1-b_2v_2 \nonumber\\
\frac{dv_2}{dt} &=& b_3v_1
\end{eqnarray}
where, $$b_{11}=x^*,\quad b_{12}=\frac{mpx^{*^p}y^*}{(x^{*^p}+c)^2}, \quad b_2=\frac{mx^*}{{x^*}^p+c},\quad b_3=\frac{dy^{*^2}}{e}.$$
Applying Laplace transformation to the system (\ref{dl:linsys}), we get,
\begin{eqnarray}\label{dl:laplace}
(s+b_{11}e^{-\varrho s}+b_{12})\mathcal{V}_1 &=& v_1(0)-b_{11}\;K(s)\;e^{-\varrho s}-b_2\mathcal{V}_2\nonumber\\
s\mathcal{V}_2 &=& v_2(0)+b_3\mathcal{V}_1
\end{eqnarray}
where, $K(s)=\int_{-\varrho}^0 e^{-st}v_1(t)\; dt$, and $\mathcal{V}_1$ and $\mathcal{V}_2$ are Laplace transforms of $v_1$ and $v_2$ respectively. Following along the lines of \cite{sA3} and by using Nyquist criterion \cite{sA3}, we can have the conditions for local asymptotic stability for the interior equilibrium point of the system (\ref{eq:sys}) are given by
\begin{equation}\label{realh}
Re\;H(i\eta_0)=0
\end{equation}
\begin{equation}\label{imagh}
Im\;H(i\eta_0)>0
\end{equation}
where $H(s)=s^2+p_1s+p_2+e^{-s\varrho}(q_1s+q_2)$ is the characteristic equation and $\eta_0$ is the smallest positive root of $Re\;H(i\eta_0)=0$.
Hence, equations (\ref{realh}) and (\ref{imagh}) can be rewritten as
\begin{equation}\label{realeq}
q_2\cos{\eta_0\varrho}+q_1\eta_0\sin{\eta_0\varrho}=\eta_0^2-p_2
\end{equation}
\begin{equation}\label{imaggr}
q_1\eta_0\cos{\eta_0\varrho}-q_2\sin{\eta_0\varrho}>-p_1\eta_0.
\end{equation}

The stability of the system is guaranteed if the conditions (\ref{realeq}) and (\ref{imaggr}) hold simultaneously. We first need to find upper bound $\eta_+$ of $\eta_0$ independent of $\varrho$ such that the system will be stable for all such $\eta_0$ that lie in $[0,\eta_+]$. We will use this $\eta_+$ to estimate the delay length $\varrho$.\\
From equation (\ref{realeq}), as $|\sin{\eta_0\varrho}|\le 1$ \& $|\cos{\eta_0\varrho}|\le 1$,
\begin{equation}\label{etaeq}
\eta_0^2-p_2\le|q_2|+|q_1|\eta_0.
\end{equation}
Hence, from (\ref{etaeq}), if
\begin{equation}\label{eta+}
\eta_+\le\frac{1}{2}\left[|q_1|+\sqrt{q_1^2+4(p_2-|q_2|)}\right]
\end{equation}
then we have $\eta_0\le\eta_+$.\\
From equations (\ref{realeq}) \& (\ref{imaggr}),
\begin{equation}
\frac{q_1\eta_0}{q_2}\left[\eta_0^2-p_2-q_1\eta_0\sin{\eta_0\varrho}\right]-q_2\sin{\eta_0\varrho}>-p_1\eta_0
\end{equation}
simplifying which we get,
\begin{equation}\label{rho1st}
\left[\frac{q_1^2\eta_0^2}{q_2}+q_2\right]\sin{\eta_0\varrho}<\frac{q_1\eta_0}{q_2}(\eta_0^2-p_2)+p_1\eta_0.
\end{equation}
Now, we can have,
\begin{equation}\label{rhoeq1}
\left[\frac{q_1^2\eta_0^2}{q_2}+q_2\right]\sin{\eta_0\varrho}\le\frac{1}{|q_2|}(q_1^2\eta_+^2+q_2^2)\eta_+\varrho
\end{equation}
and
\begin{equation}\label{rhoeq2}
\frac{q_1\eta_0}{q_2}(\eta_0^2-p_2)+p_1\eta_0\le\frac{q_1(\eta_+^2-p_2)+|p_1q_2|}{|q_2|}\eta_+.
\end{equation}
Now, from  (\ref{rho1st}), (\ref{rhoeq1}) and (\ref{rhoeq2}),
\begin{equation}\label{delaylimit}
0\le\varrho<\varrho_+
\end{equation}
where
\begin{equation}\label{delayupper}
\varrho_+=\frac{q_1(\eta_+^2-p_2)+|p_1q_2|}{q_1^2\eta_+^2+q_2^2}.
\end{equation}
Thus when (\ref{delaylimit}) holds for $\varrho$, the system preserves local asymptotic stability.
\section{Numerical Simulation} \label{secnum}
In the previous section, the conditions for stability and for occurrence of Hopf-bifurcation in system (\ref{eq:sys}) have been derived analytically. Here, numerical computations are performed to understand our results obtained in previous sections by choosing suitable values of the parameters. For different values of delays, we have obtained different scenarios with $E(x^*, y^*)$ as the interior equilibrium point. The values of the parameters are taken as: $m=1.2, p=2, c=0.3, d=0.5, e = 0.2, a=0.2$. In Fig. \ref{fig:s1}, a stable solution for the species (\ref{eq:sys}) has been plotted by taking the delay as bifurcation parameter. The interior equilibrium point $E(x^*, y^*)$ is seen to be stable for less values of the delay i.e., $\varrho<\varrho_{0}^*=1.125$. At the critical value of delay ($\varrho=\varrho_{0}^*$) we get a stable periodic solution where the Hopf-bifurcation occurs and there we get a stable limit cycle around $E(x^*, y^*)$ (Fig. \ref{fig:s2}). For large values of delay ($\varrho>\varrho_{0}^*$) the system becomes unstable (Fig. \ref{fig:s3}).

\begin{figure}[H]
\includegraphics[width=5.2in, height=2.7in]{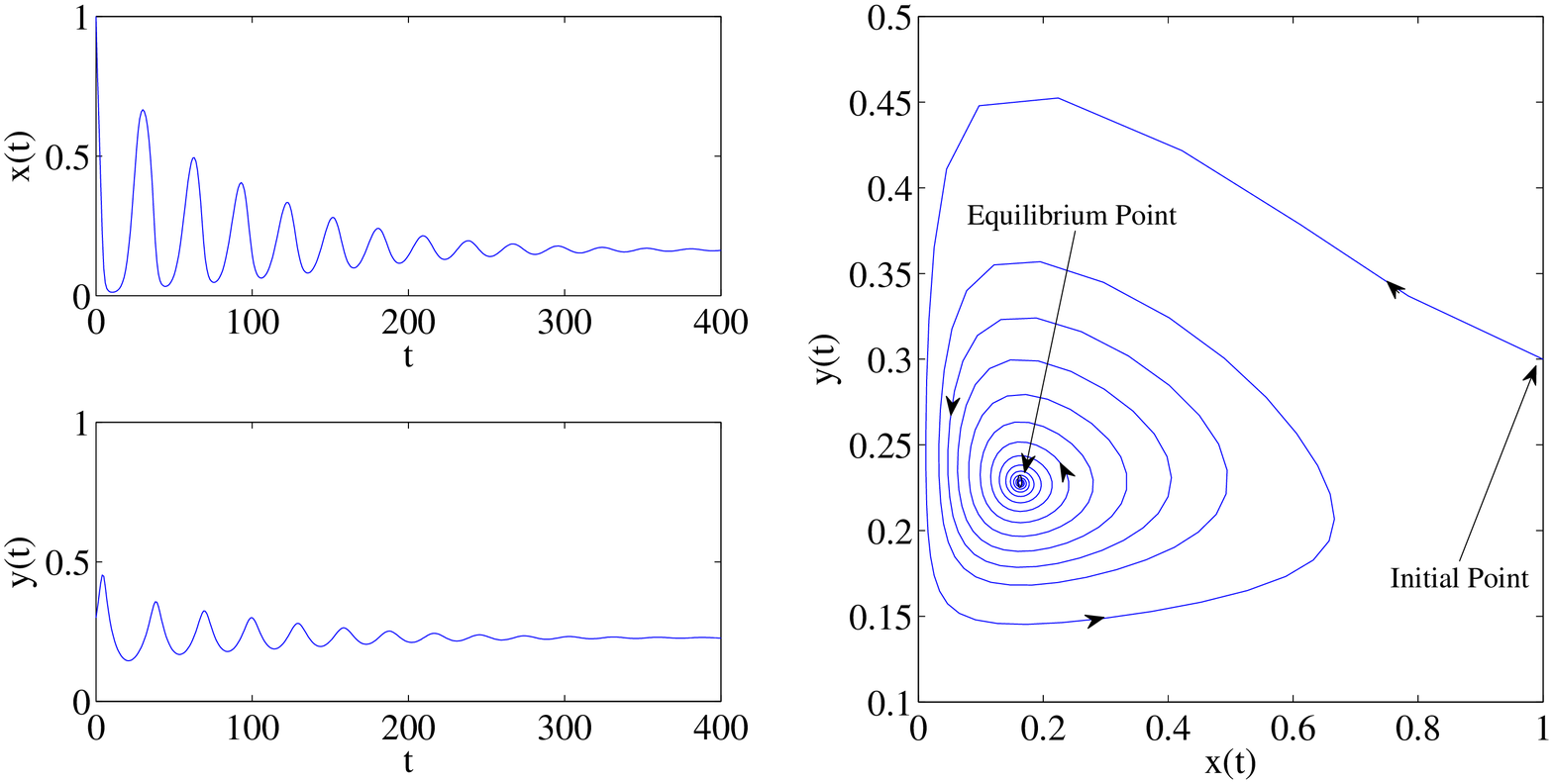}
\caption{Time series solution of the system (\ref{eq:sys}) has been plotted by taking $\varrho<\varrho_0^*$ (Left side). It shows that the solutions of the system (3) are stable. Right side of this figure shows the phase diagram of the system (3), which indicates that the equilibrium $E(x^*,y^*)$  is a stable equilibrium point. The values of the parameters are taken as: $m = 1.2$, $p = 2$, $c = 0.3$, $d=0.5$, $e = 0.2$, $a = 0.2$. The initial point is $(1,0.3)$.}
\label{fig:s1}       
\end{figure}
\begin{figure}[H]
\includegraphics[width=5.2in, height=2.7in]{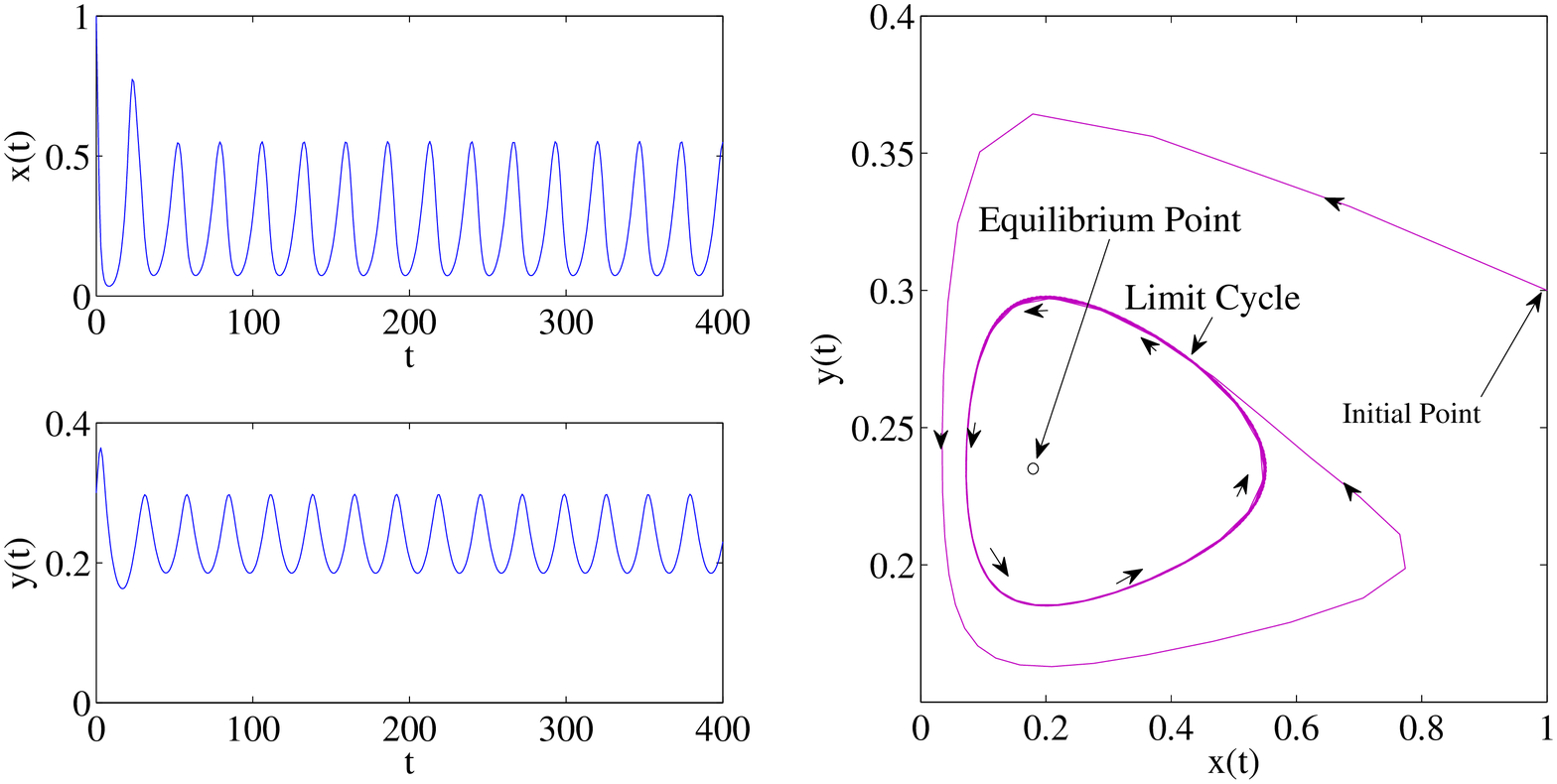}
\caption{Time series solution of the system (3) has been plotted by taking $\varrho=\varrho_0^*$ (Left side). It shows that the solutions are periodic. Right side of this figure shows the phase diagram of the system (\ref{eq:sys}), which indicates a stable limit cycle around the equilibrium $E(x^*,y^* )$, when other parameters are same as that of Fig. \ref{fig:s1}. }
\label{fig:s2}       
\end{figure}
\begin{figure}[H]
\includegraphics[width=5.2in, height=2.7in]{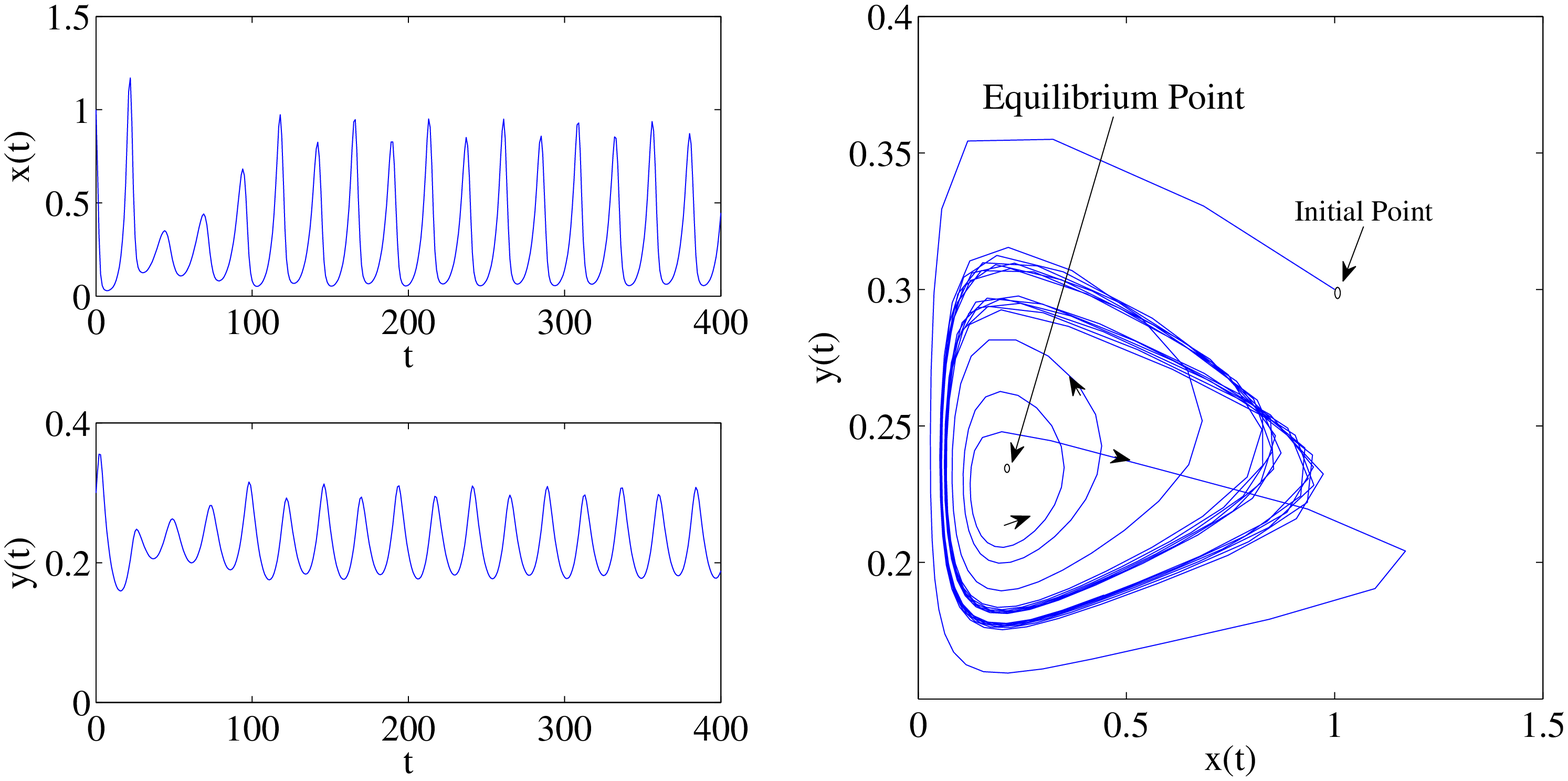}
\caption{This figure has been plotted by taking $\varrho>\varrho_0^*$. Left side of this figure shows the time series solution for species and the corresponding phase diagram has been plotted in the right side of this figure. It depicts unstable solution of the system (3) when $\varrho>\varrho_0^*$. Other parameters are same as that of Fig. \ref{fig:s1}. }
\label{fig:s3}       
\end{figure}
Occurrence of Hopf bifurcation with respect to parameters $c$, $d$, $e$ and $a$ are illustrated in the figures below for zero delay in system (\ref{eq:sys}). The  value of the parameters are taken as $m=1.2$, $p=2$, $c=0.3$, $d=0.4$, $e=0.2$ and $a=0.2$. Each time when performing bifurcation on a single parameter, other parameters are taken constant whose values are taken from the above parameter set.

Figure \ref{fig_c} illustrates the situation when the system (\ref{eq:sys}) undergoes a Hopf bifurcation as the parameter $c$ changes value in the absence of delay. The bifurcation is supercritical and occurs at $c=c_H$ for $m=1.2$, $p=2$, $d=0.4$, $e=0.2$ and $a=0.2$.
\begin{figure}[H]
 \subfloat[Bifurcation diagram\label{subfig-4}]{%
  \includegraphics[width=0.45\textwidth, height=6cm]{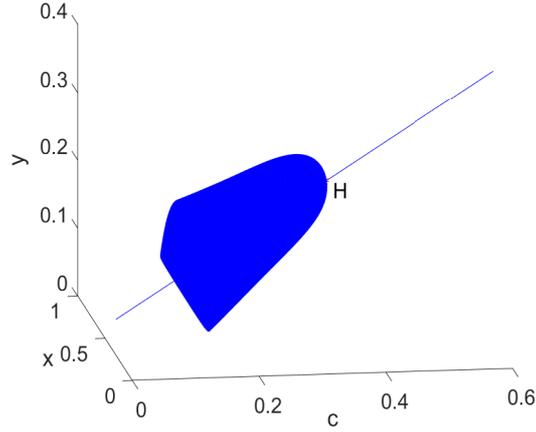}}\\
 \subfloat[Time series \& phase portrait when $c=0.4$\label{subfig-5}]{%
  \includegraphics[width=0.25\textwidth, height=3.5cm]{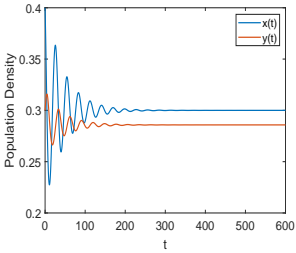}
  \includegraphics[width=0.25\textwidth, height=3.5cm]{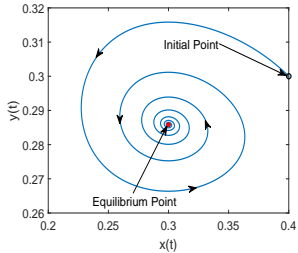}}
 \subfloat[Time series \& phase portrait when $c=0.25$\label{subfig-6}]{%
  \includegraphics[width=0.25\textwidth, height=3.5cm]{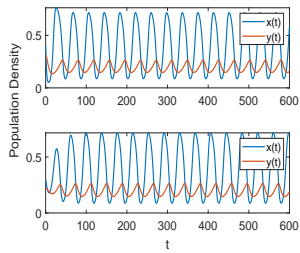}
  \includegraphics[width=0.25\textwidth, height=3.5cm]{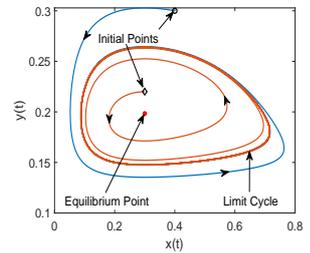}}
 \caption{{\bf(a)} The positive equilibrium of system (\ref{eq:sys}) undergoes a Hopf bifurcation for $c_H=0.3300005$ when $\varrho=0$. {\bf(b)} The equilibrium state $E$ is stable for $c=0.4>c_H$, where the initial point of the simulation is $\circ(0.4,0.3)$. {\bf(c)} For $c=0.25<c_H$, there exists a stable limit cycle around the unstable equilibrium point, where the initial points of the simulation are $\circ(0.4,0.3)$ \& $\diamond(0.3,0.22)$.}
 \label{fig_c}
\end{figure}
When $\varrho=0$, system (\ref{eq:sys}) undergoes Hopf bifurcation at E for $d=d_{H1}, d_{H2}, \& d_{H3}$ with other parameters as $m=1.2$, $p=2$, $c=0.3$, $e=0.2$ and $a=0.2$. The parameter $d$ can take positive values less than $1$ for $E$ to be biologically feasible. LPC-bifurcation is observed for $d=d_{LPC}$, where two limit cycles of different periods collide and vanish as $d$ crosses $d_{LPC}$. Here, LPC stands for "Limit Point of Cycles".
\begin{figure}[H]
 \subfloat[Bifurcation diagram\label{subfig-7}]{%
  \includegraphics[width=0.45\textwidth, height=6cm]{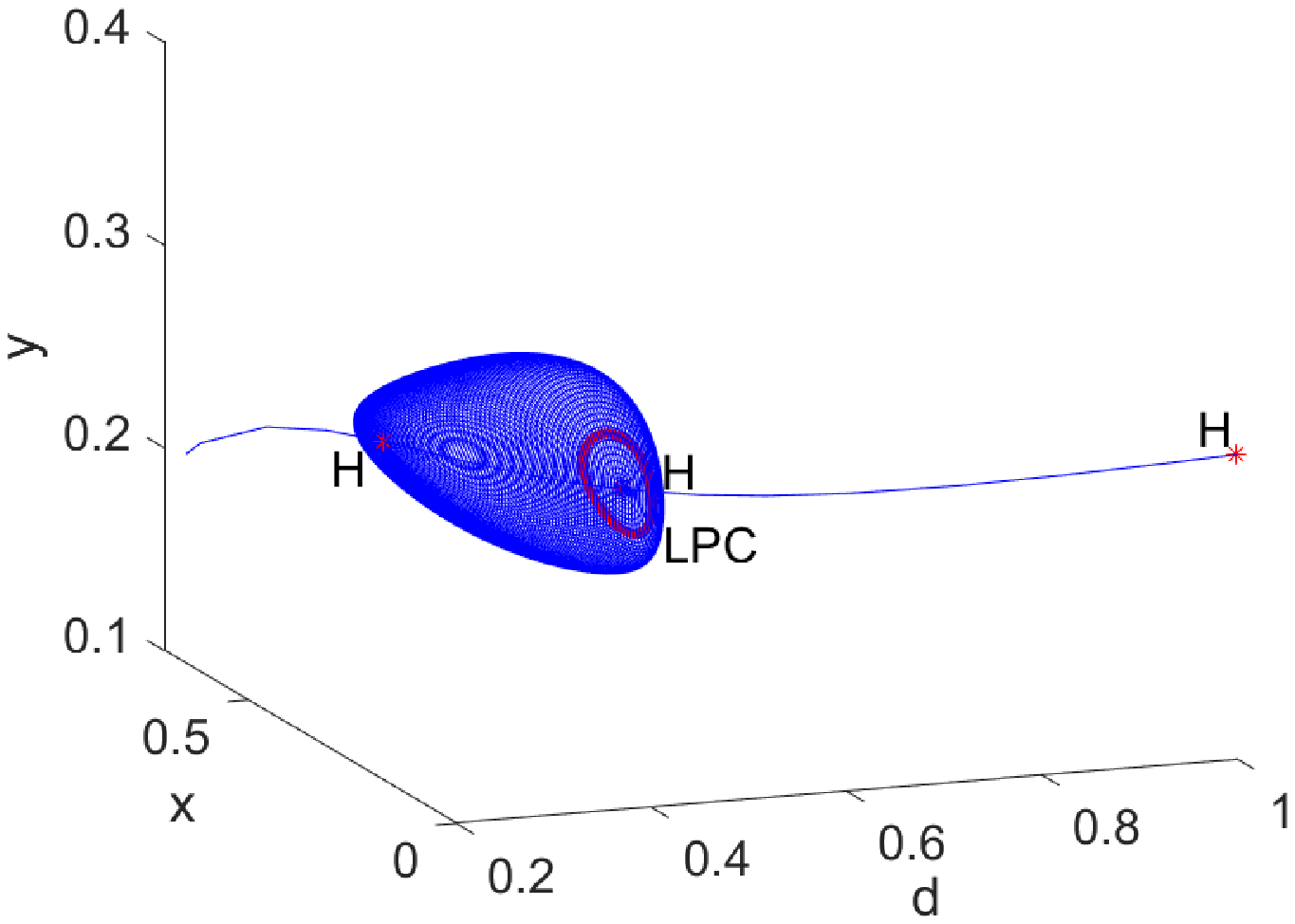}}
 \subfloat[Zoomed in version of (a) near the LPC\label{subfig-7}]{%
  \includegraphics[width=0.45\textwidth, height=6cm]{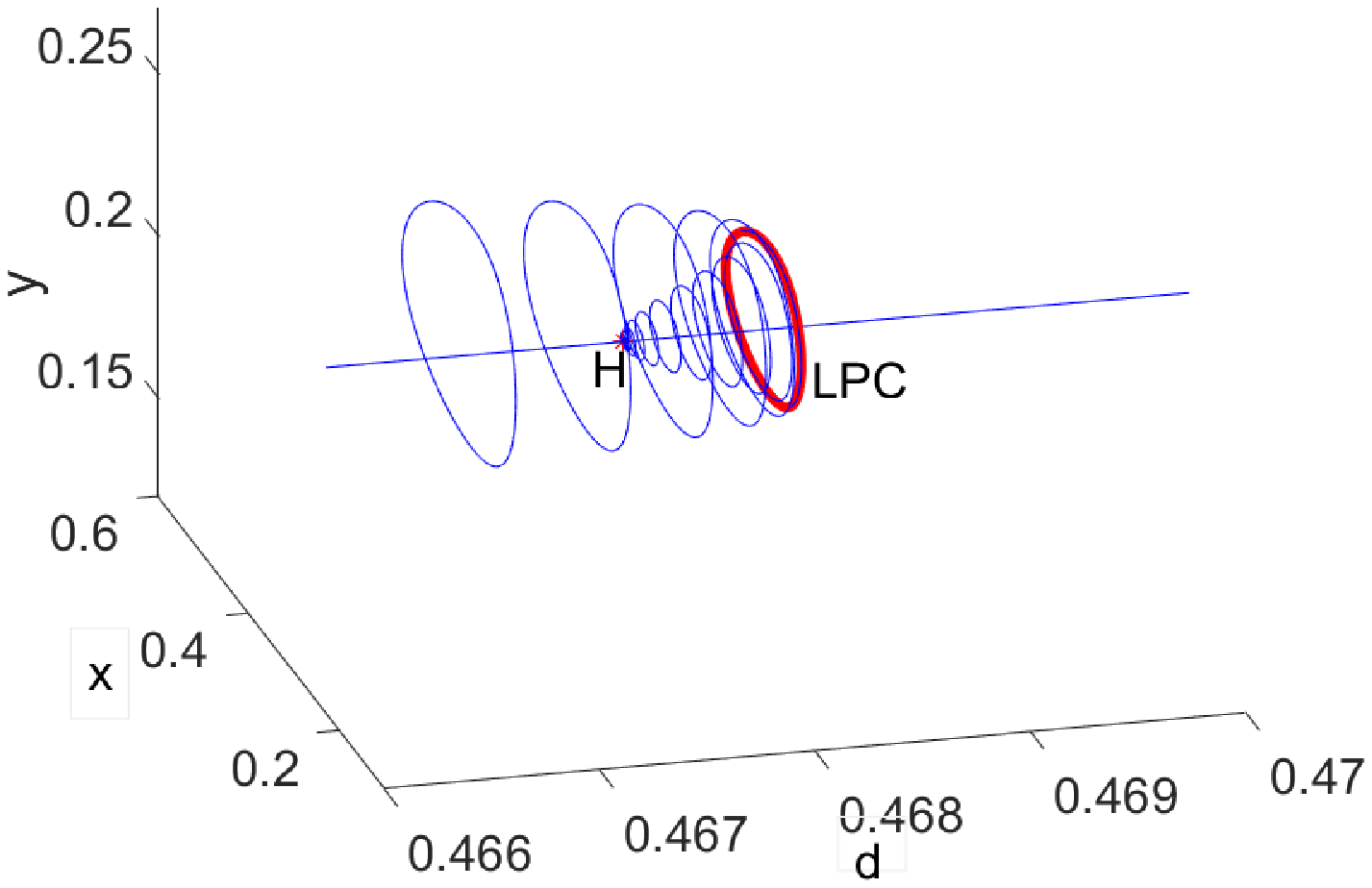}}\\
 \subfloat[Time series \& phase portrait when $d=0.28$\label{subfig-8}]{%
  \includegraphics[width=0.25\textwidth, height=3.5cm]{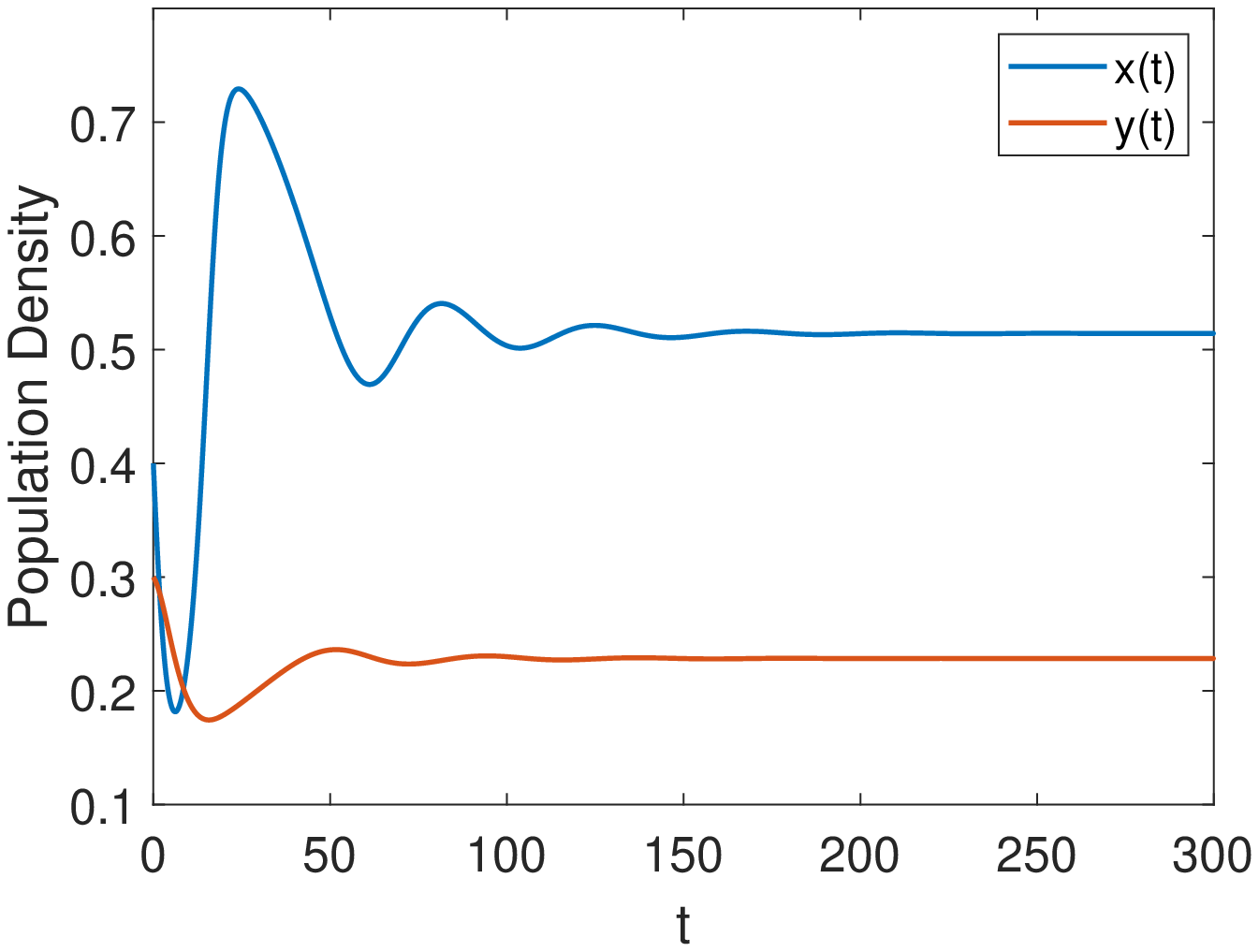}
  \includegraphics[width=0.25\textwidth, height=3.5cm]{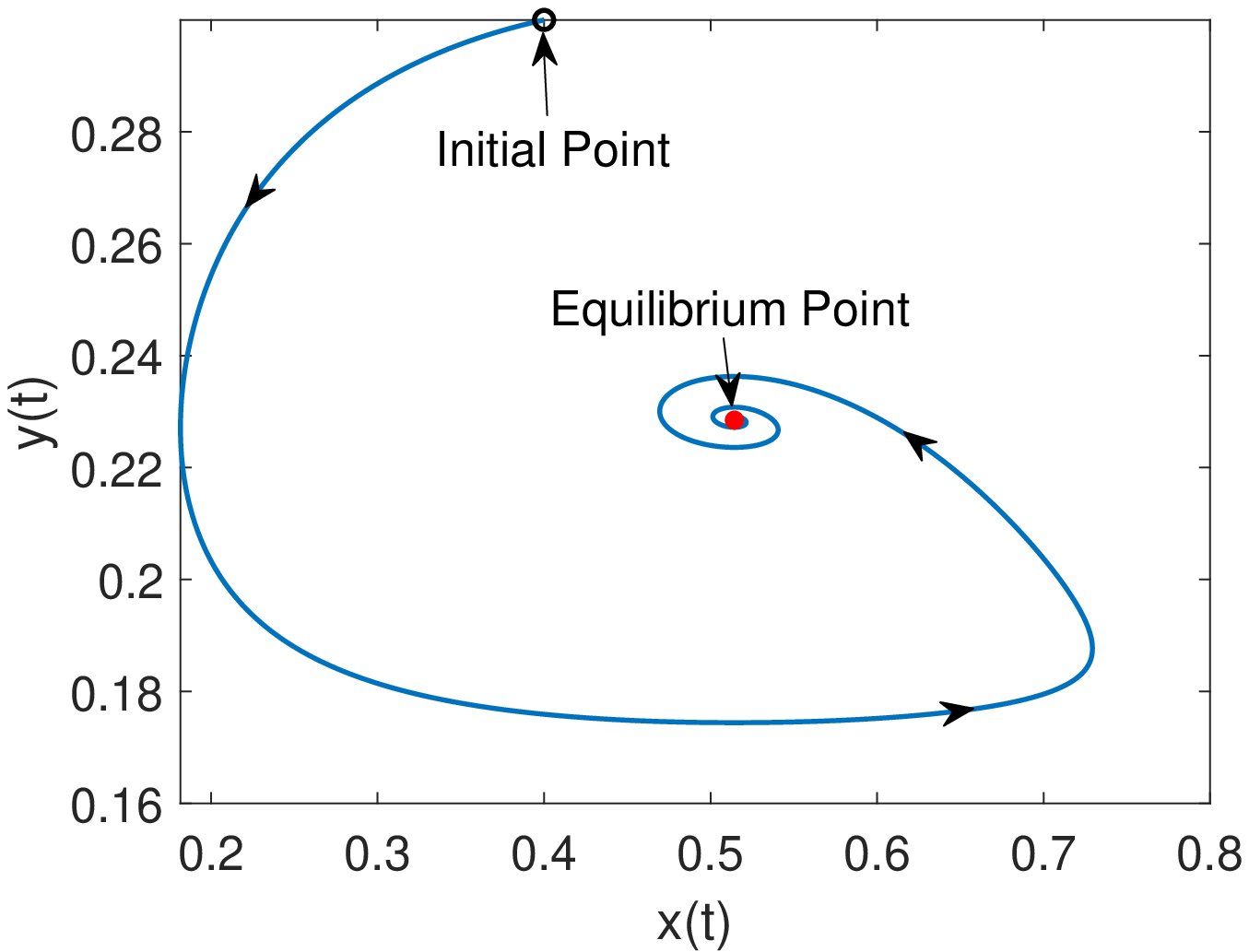}}
 \subfloat[Time series \& phase portrait when $d=0.35$\label{subfig-8}]{%
  \includegraphics[width=0.25\textwidth, height=3.5cm]{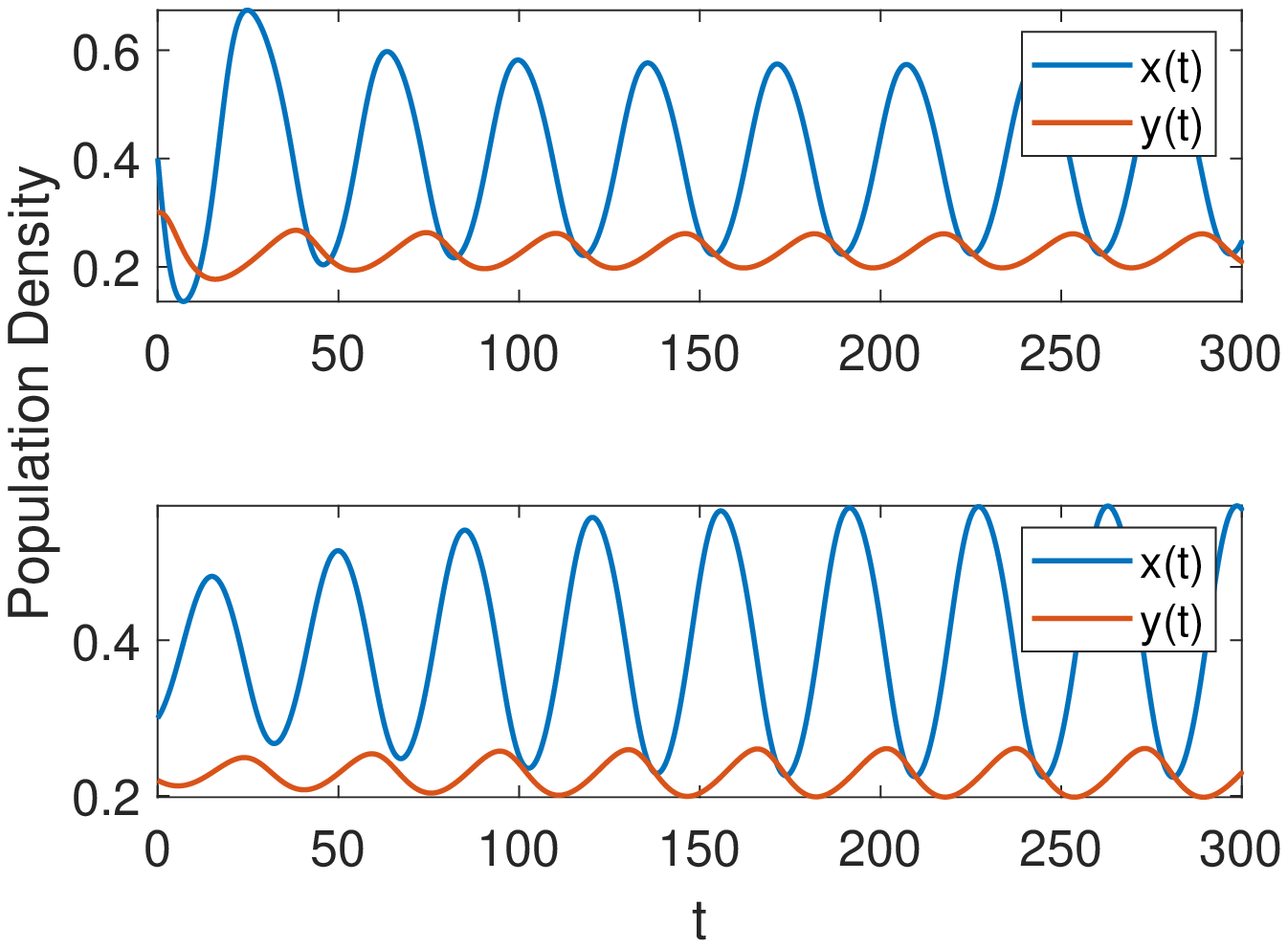}
  \includegraphics[width=0.25\textwidth, height=3.5cm]{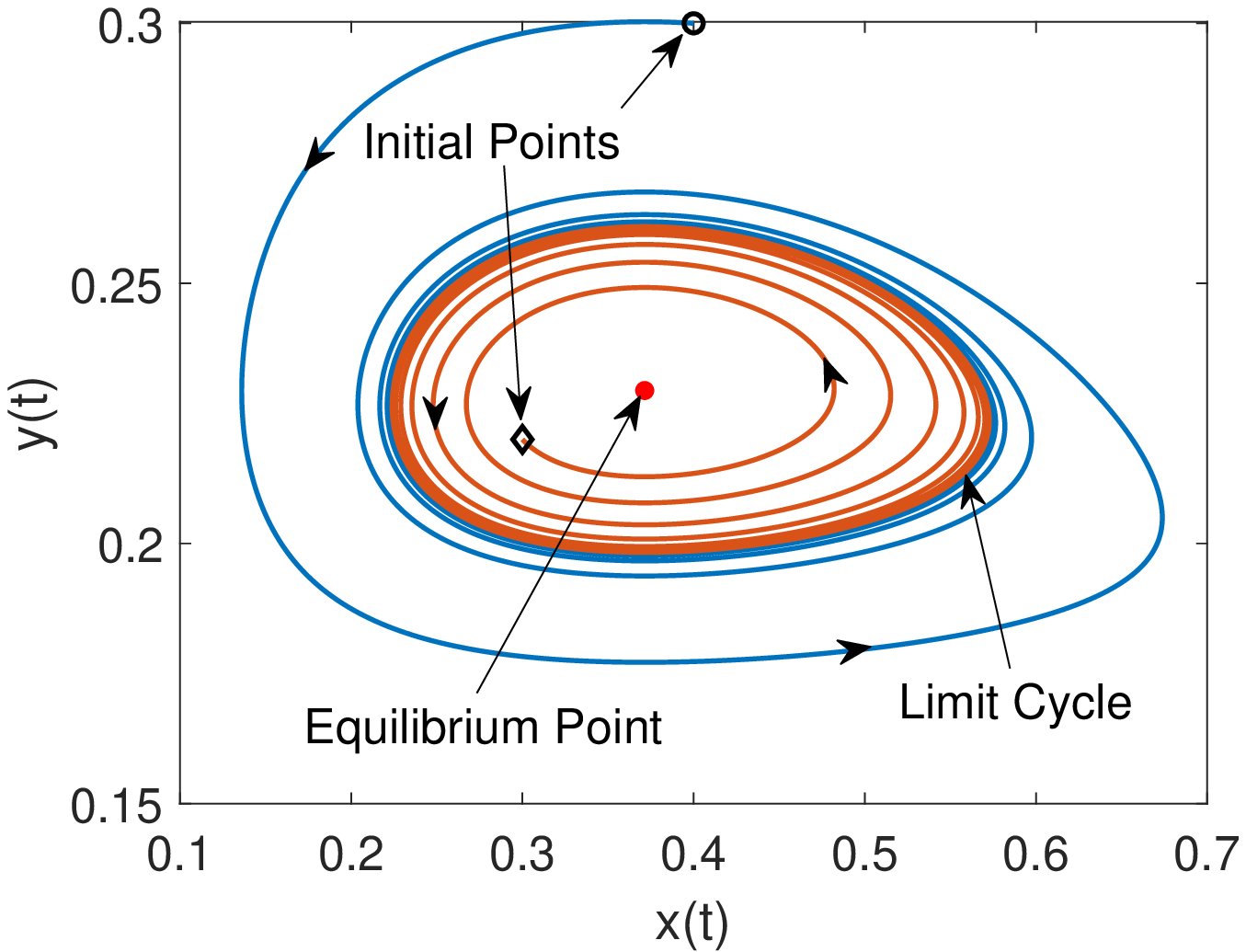}}\\
 \subfloat[Time series when $d=0.4676$\label{subfig-8}]{%
  \includegraphics[width=0.33\textwidth, height=4.5cm]{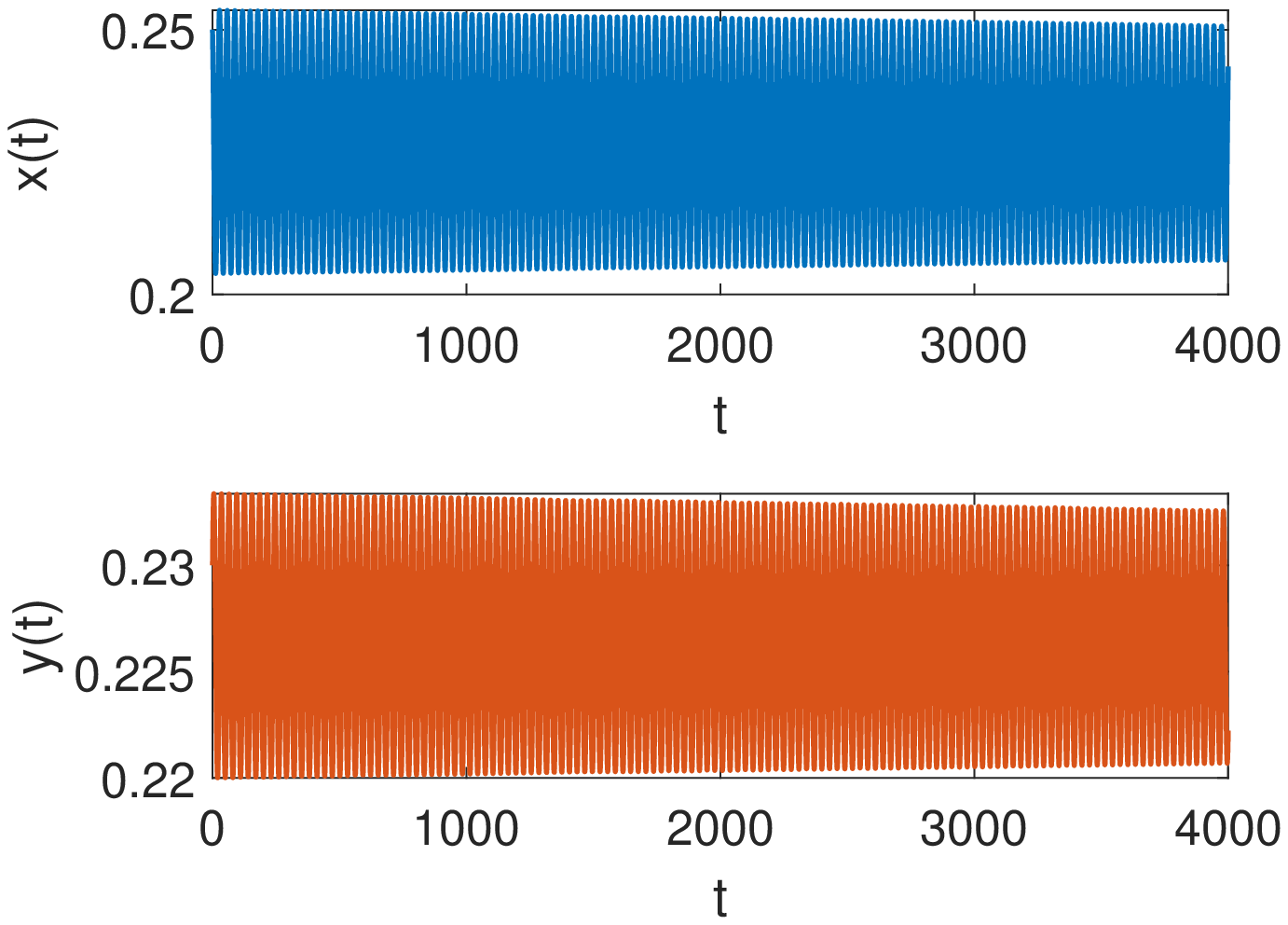}
  \includegraphics[width=0.33\textwidth, height=4.5cm]{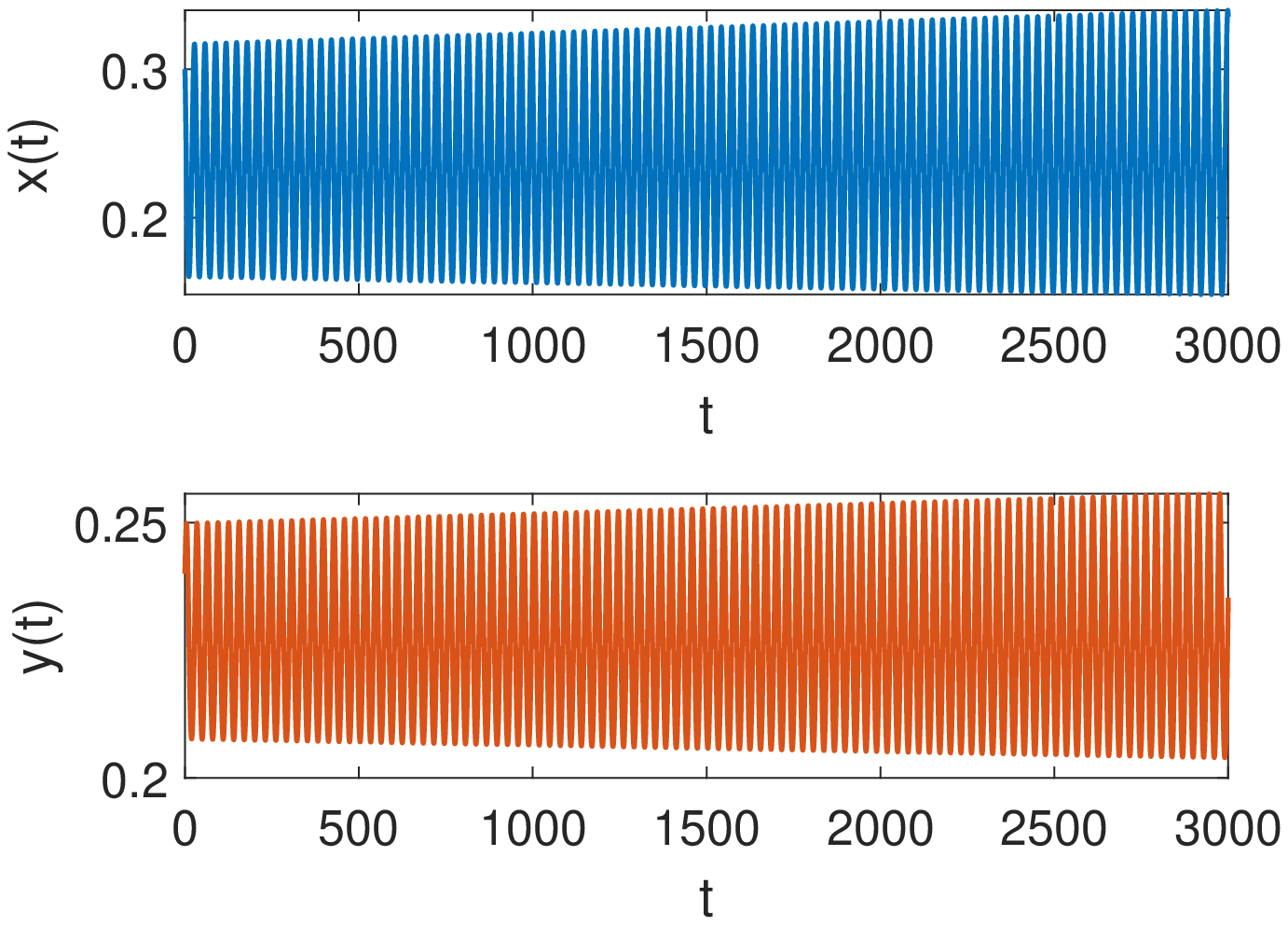}
  \includegraphics[width=0.33\textwidth, height=4.5cm]{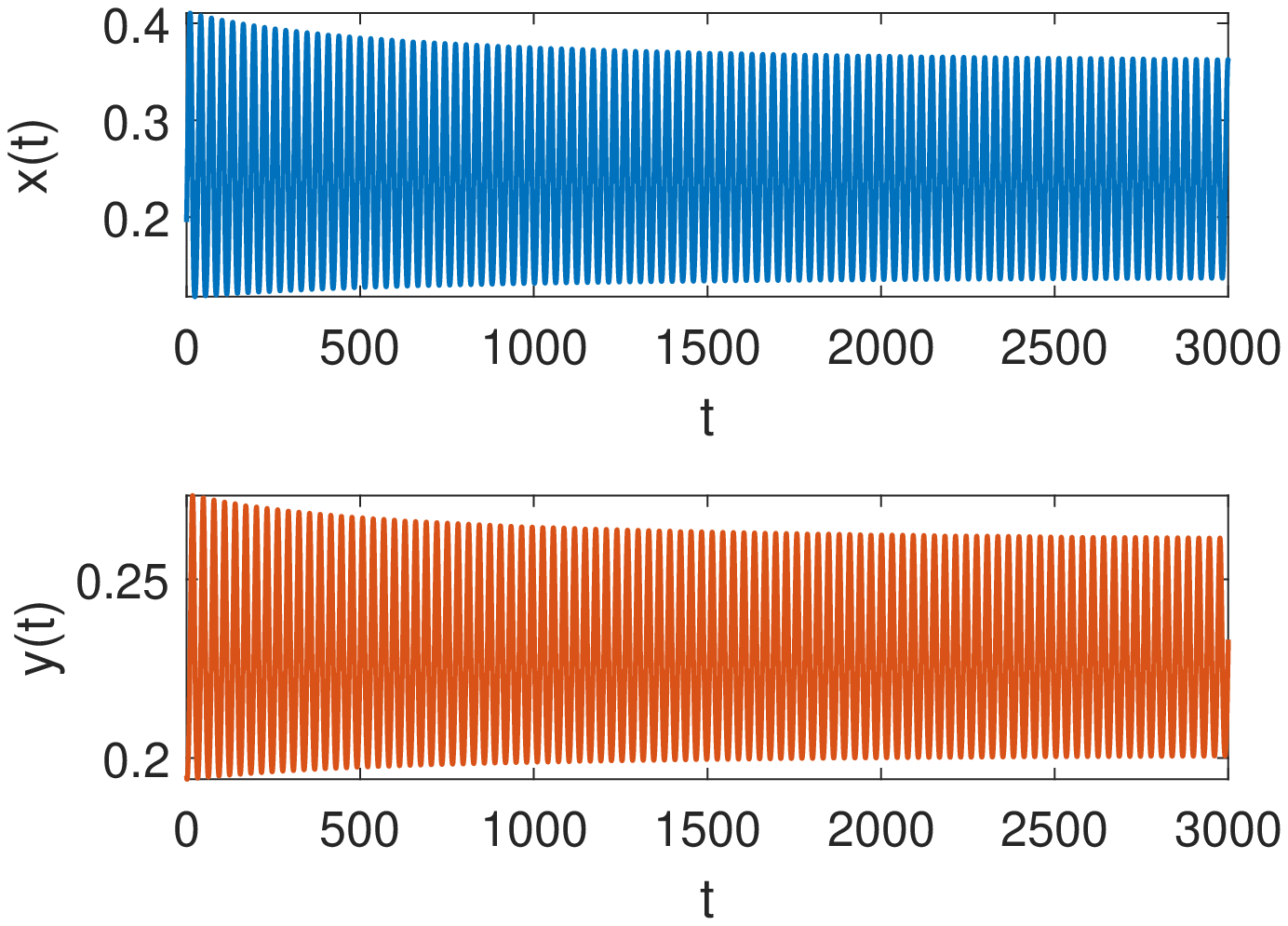}}\\
 \subfloat[Phase portrait when $d=0.4676$\label{subfig-8}]{%
  \includegraphics[width=0.33\textwidth, height=4.5cm]{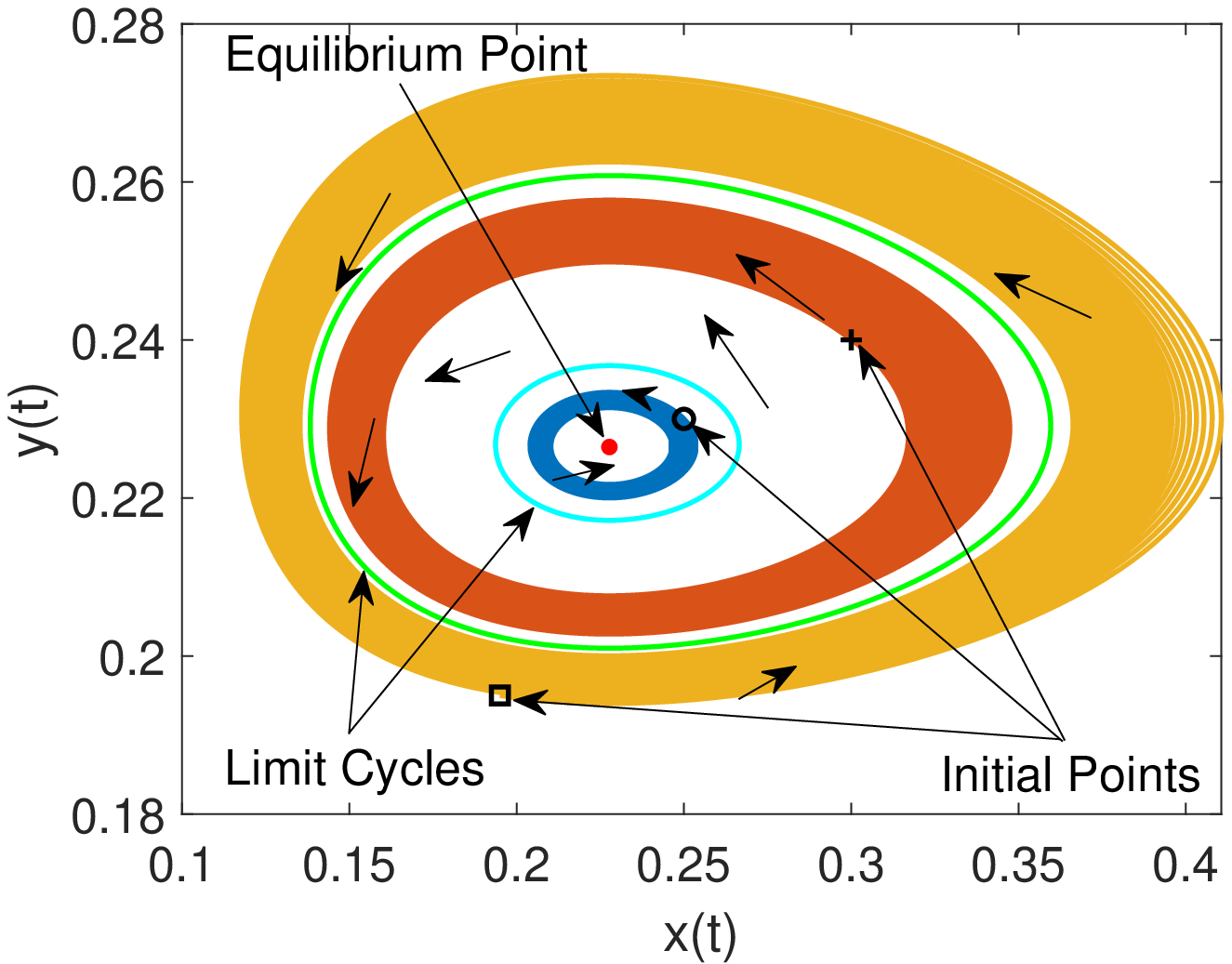}}
 \subfloat[Time series \& phase portrait when $d=0.5$\label{subfig-8}]{%
  \includegraphics[width=0.33\textwidth, height=4.5cm]{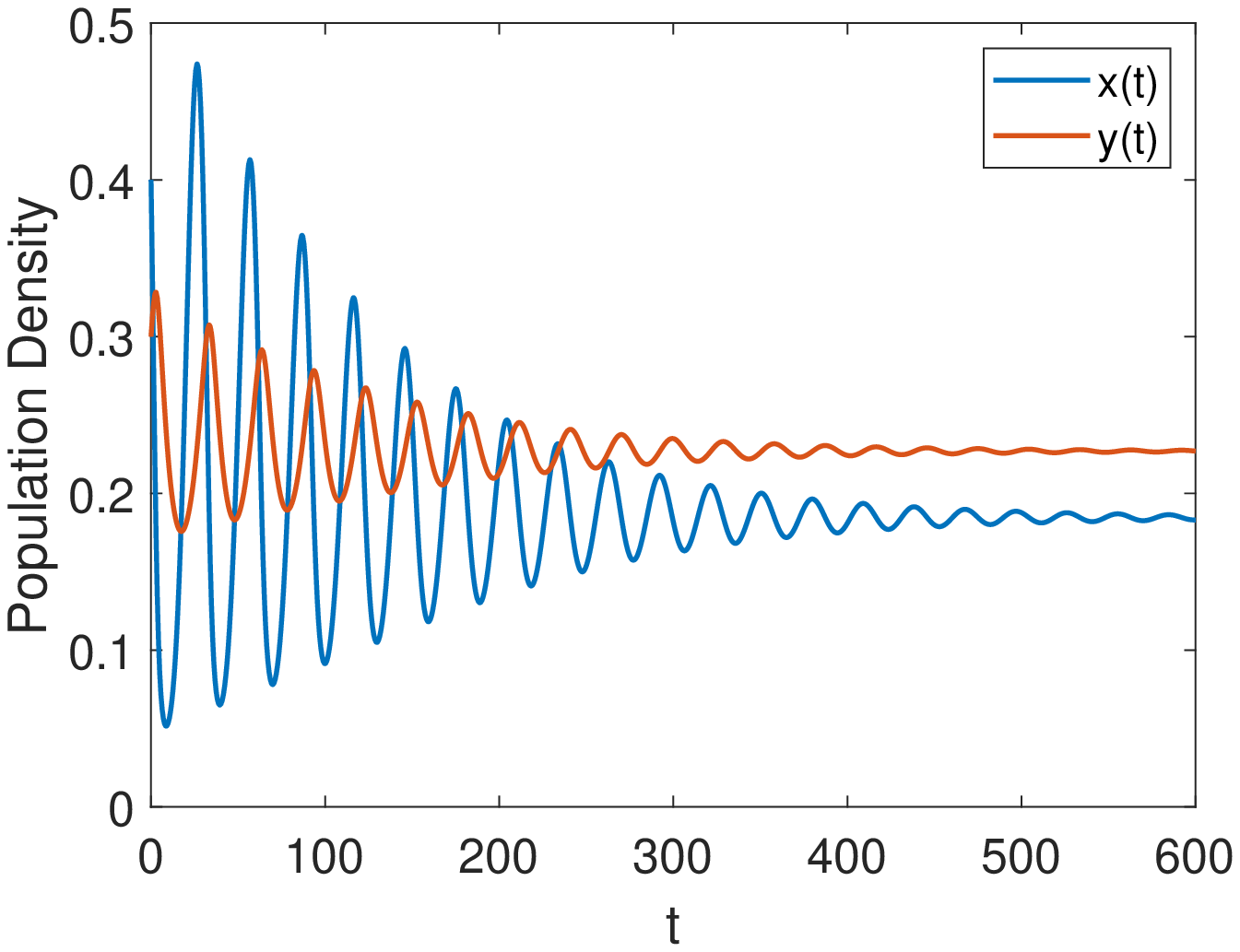}
  \includegraphics[width=0.33\textwidth, height=4.5cm]{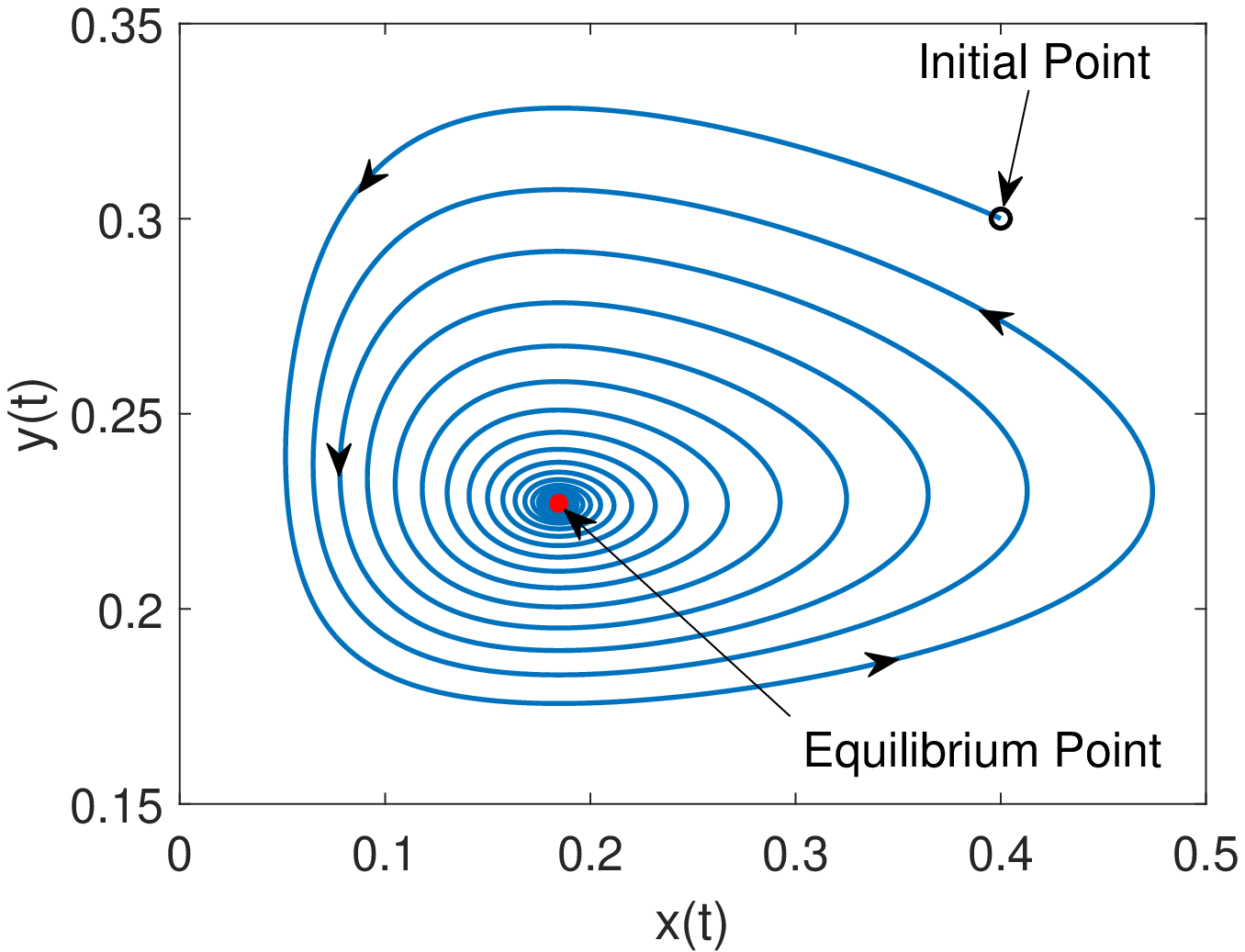}}
 \caption{{\bf(a)} The positive equilibrium of system (\ref{eq:sys}) undergoes Hopf bifurcation for $d_{H1}=0.31311396, d_{H2}=0.46737359$ \& $d_{H3}=0.99999883$ when $\varrho=0$. {\bf(b)} Limit point of cycles(LPC) occurs at $d_{LPC}=0.4680539$. {\bf(c)} $E$ is stable for $d=0.28<d_{H1}$, where the initial point of the simulation is $\circ(0.4,0.3)$. {\bf(d)} $E$ is unstable and there exists a stable limit cycle for $d_{H1}<d=0.35<d_{H2}$, where the initial points of the simulation are $\circ(0.4,0.3)$ \& $\diamond(0.3,0.22)$. {\bf(e)} Time series for $d_{H2}<d=0.4676<d_{LPC}$. {\bf(f)} Phase portrait corresponding to time series plotted in (e) with initial conditions $\circ(0.25,0.23)$, $+(0.3,0.24)$ \& $\square(0.195,0.195)$, where the equilibrium point is stable, the inner limit cycle is unstable and the outer limit cycle is stable. {\bf(g)} The equilibrium point $E$ is stable for $d=0.5>d_{LPC}$.}
 \label{fig_d}
\end{figure}
In figure \ref{fig_e}, the bifurcation diagram of the corresponding non-delayed system of system (\ref{eq:sys}) is plotted with respect to the parameter $e$ along with time series and phase portrait for different values of $e$.
The figure shows that the system undergoes Hopf bifurcation at the equilibrium point $E$ for $e=e_{H1}, e_{H2}$ \& $e_{H3}$ with $m=1.2$, $p=2$, $c=0.3$, $d=0.4$ and $a=0.2$. For the positive equilibrium point to be biologically feasible, the value of $e$ should be greater than $0.8$. Limit cycles originating from the Hopf  points collide and vanish at the LPC-point $e=e_{LPC}$.
\begin{figure}[H]
 \subfloat[Bifurcation diagram\label{subfig-10}]{%
  \includegraphics[width=0.35\textwidth, height=4.5cm]{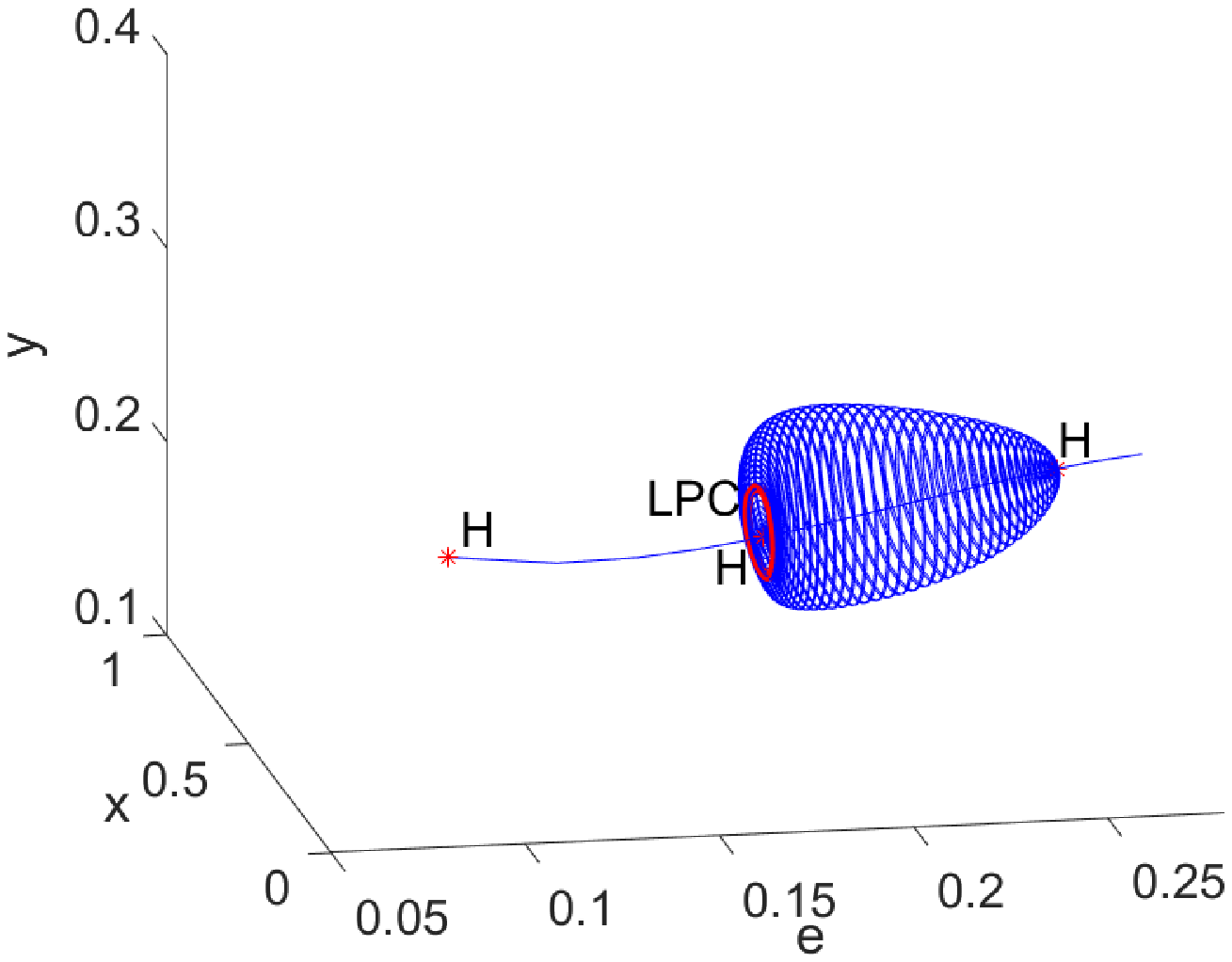}}\\
 \subfloat[Time series \& phase portrait for $e=0.12$\label{subfig-11}]{%
  \includegraphics[width=0.25\textwidth, height=3.5cm]{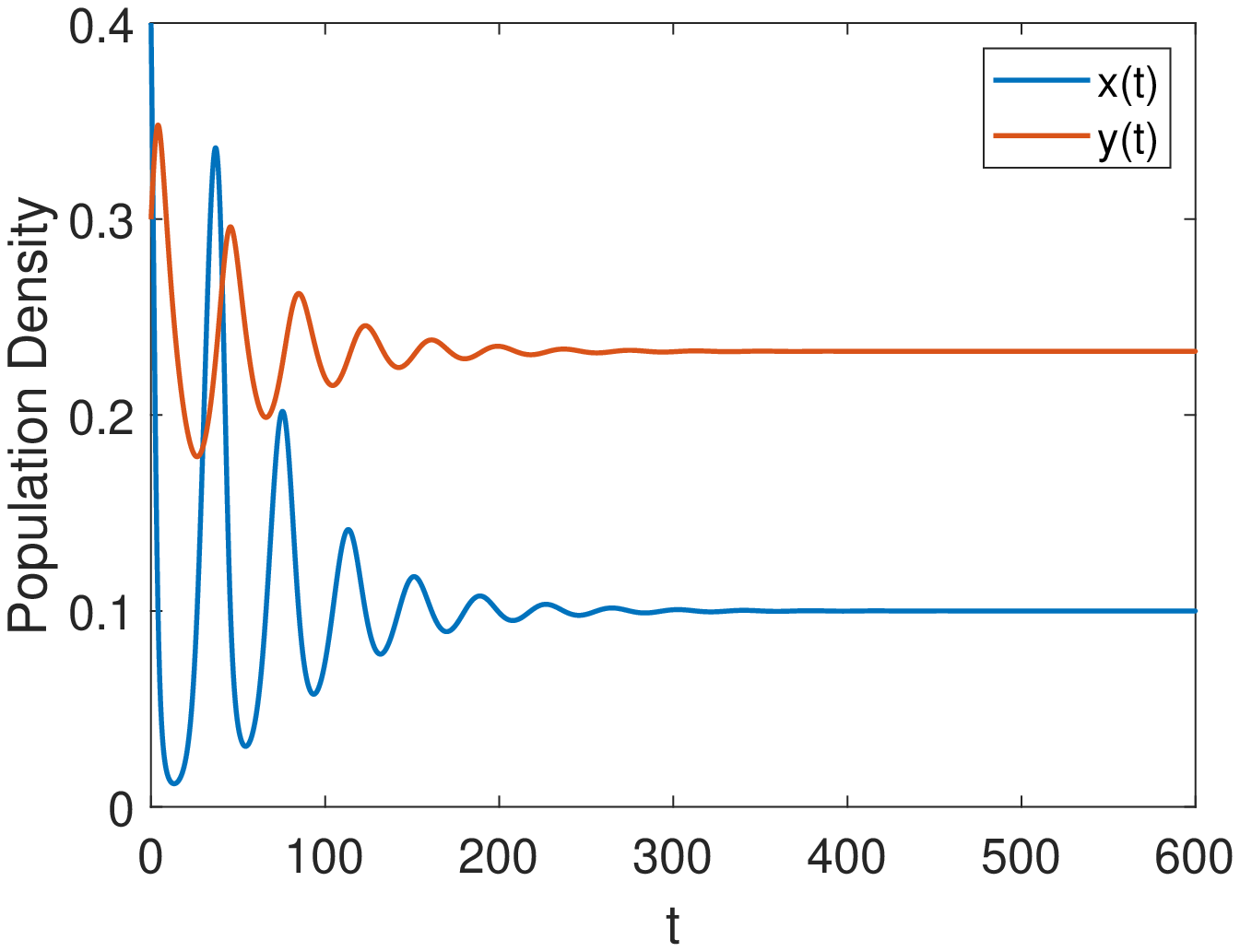}
  \includegraphics[width=0.25\textwidth, height=3.5cm]{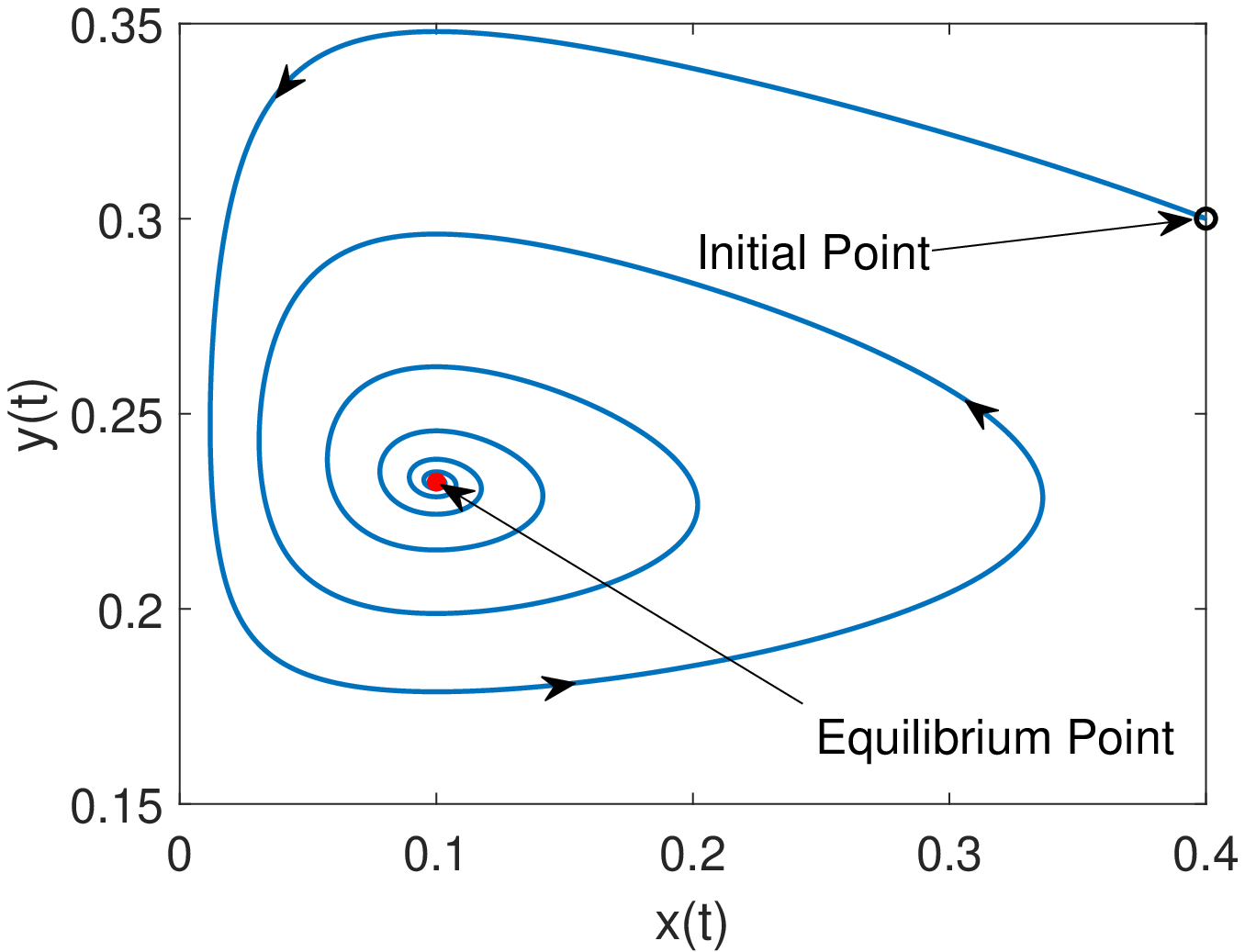}}
 \subfloat[Time series \& phase portrait for $e=0.2$\label{subfig-11}]{%
  \includegraphics[width=0.25\textwidth, height=3.5cm]{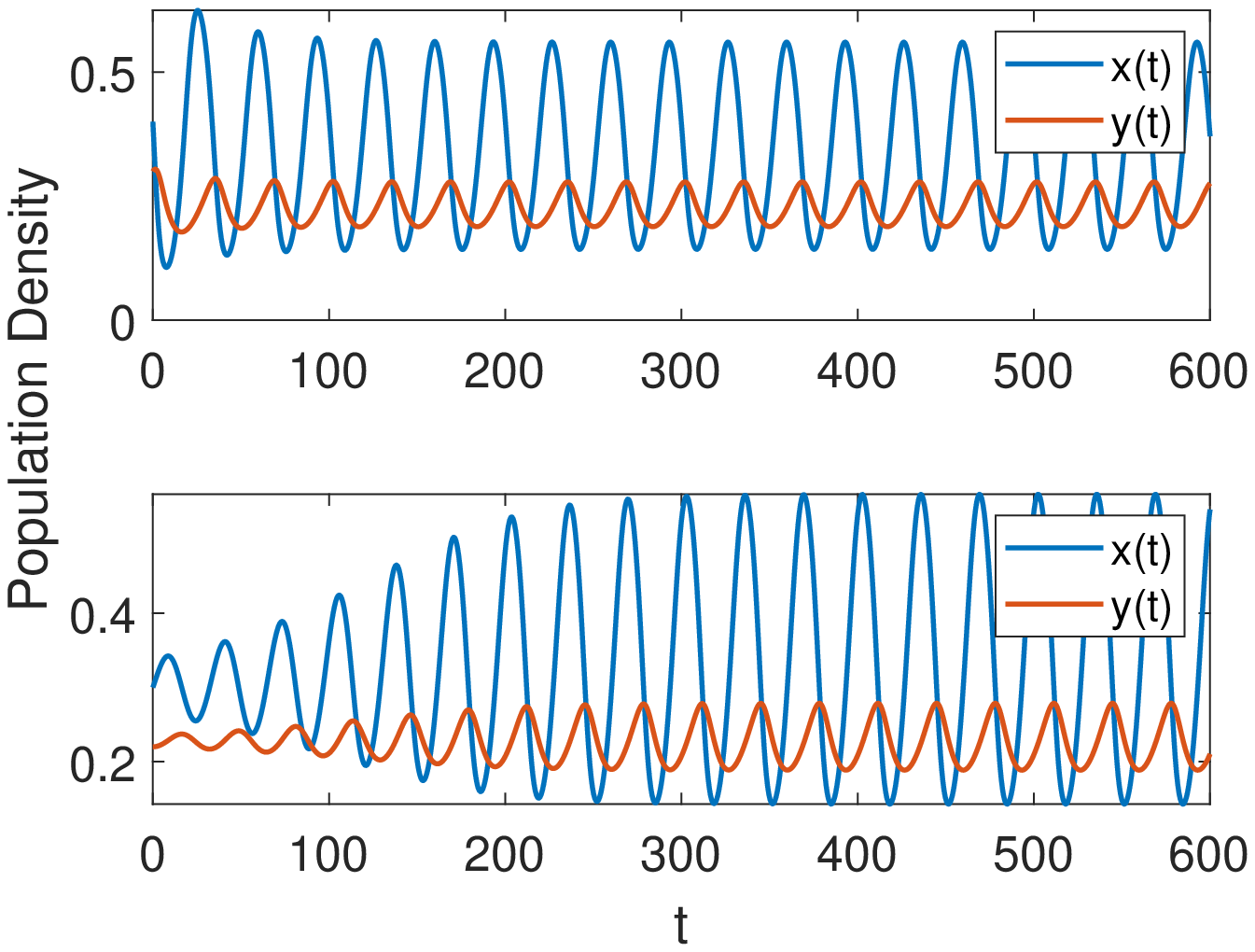}
  \includegraphics[width=0.25\textwidth, height=3.5cm]{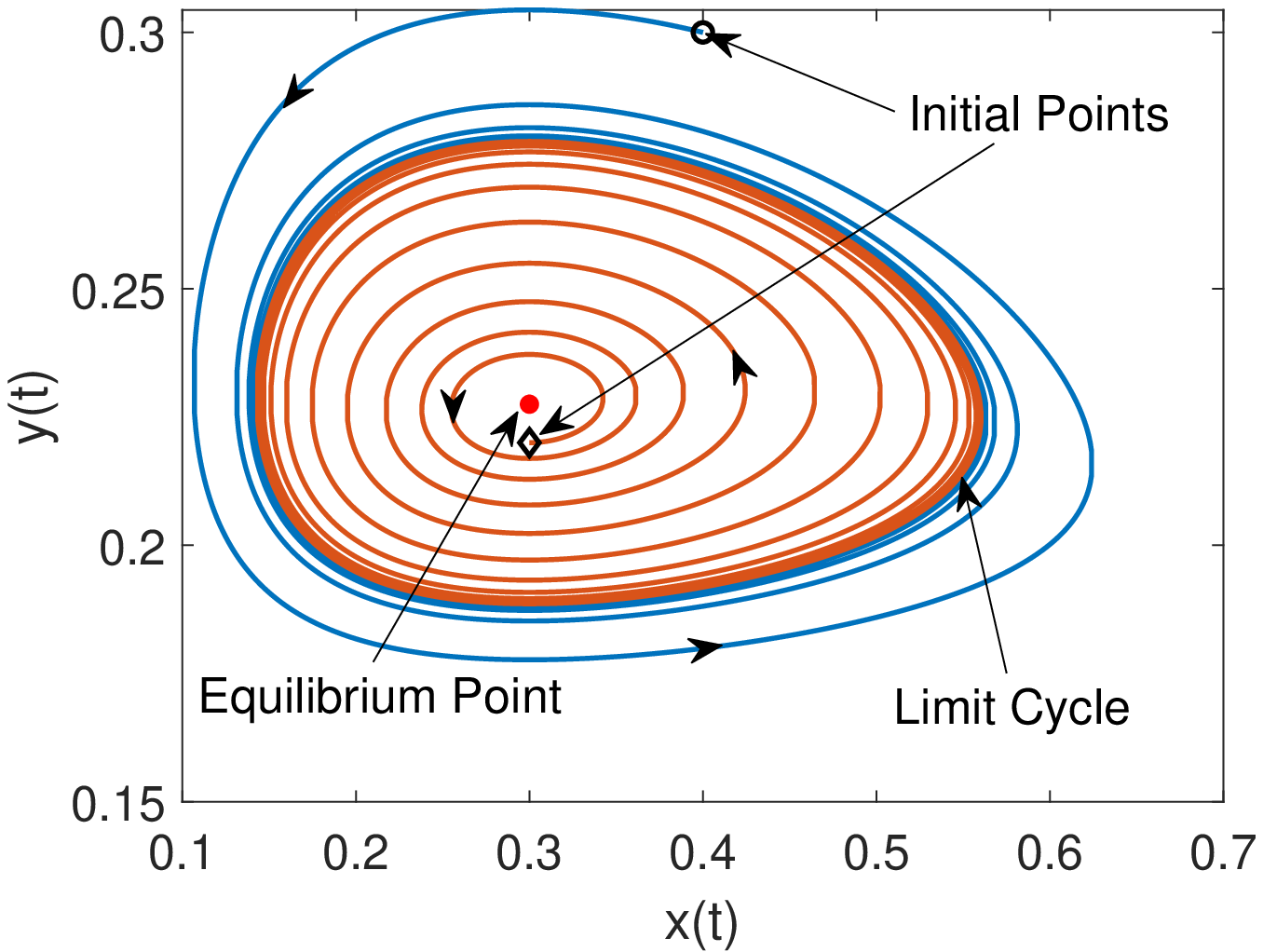}}\\
 \subfloat[Time series for $e=0.171$\label{subfig-11}]{%
  \includegraphics[width=0.33\textwidth, height=4.5cm]{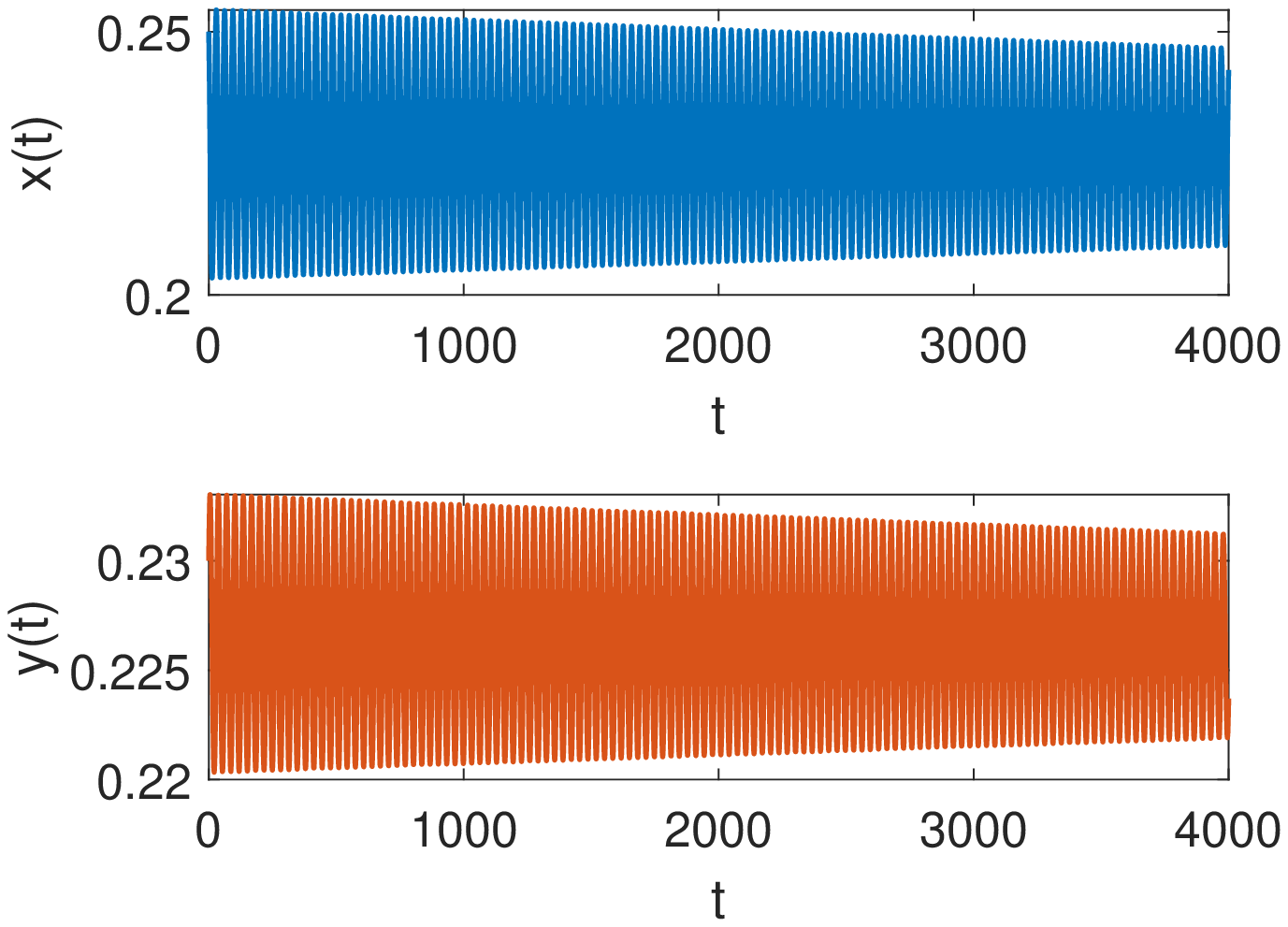}
  \includegraphics[width=0.33\textwidth, height=4.5cm]{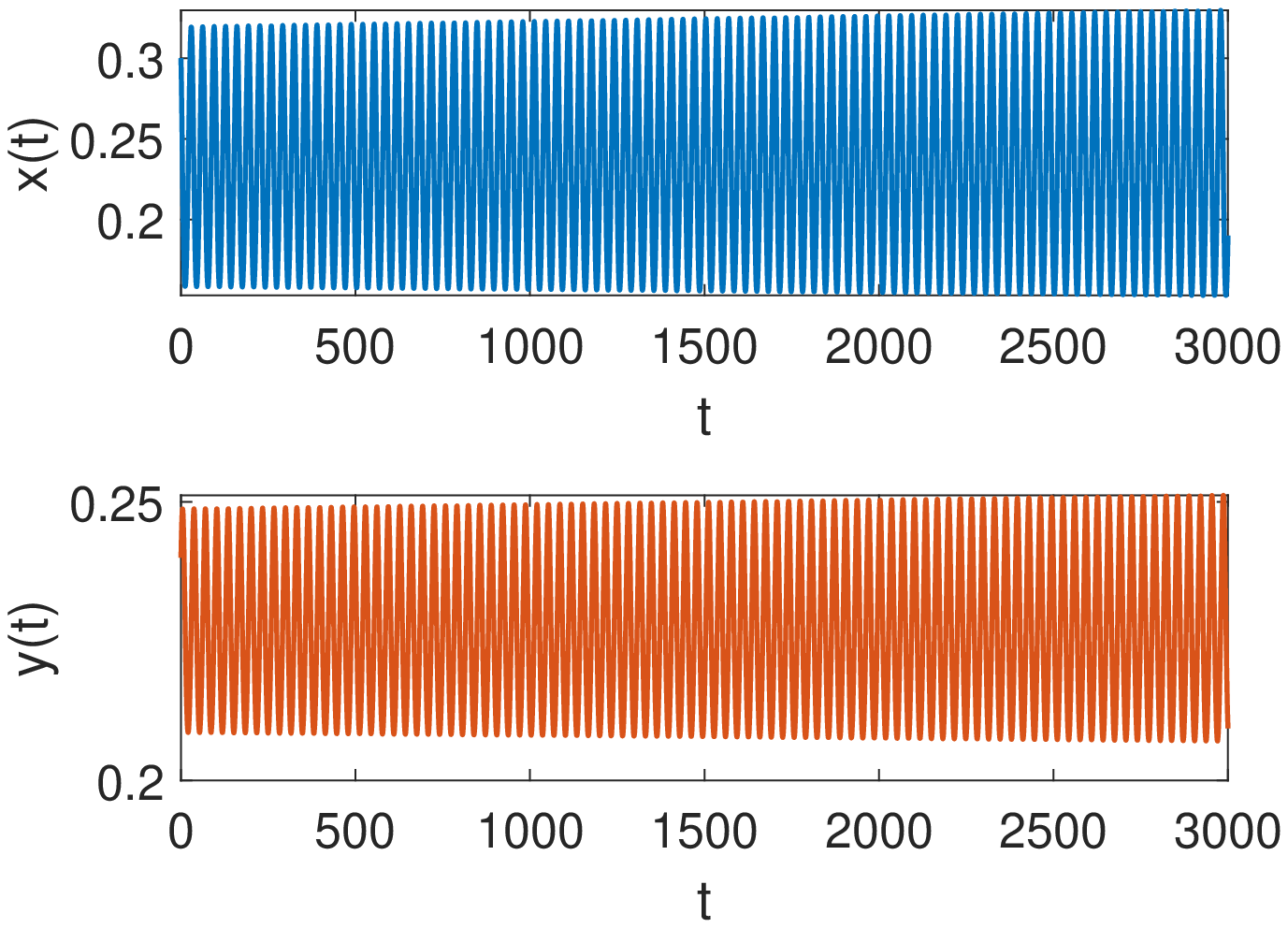}
  \includegraphics[width=0.33\textwidth, height=4.5cm]{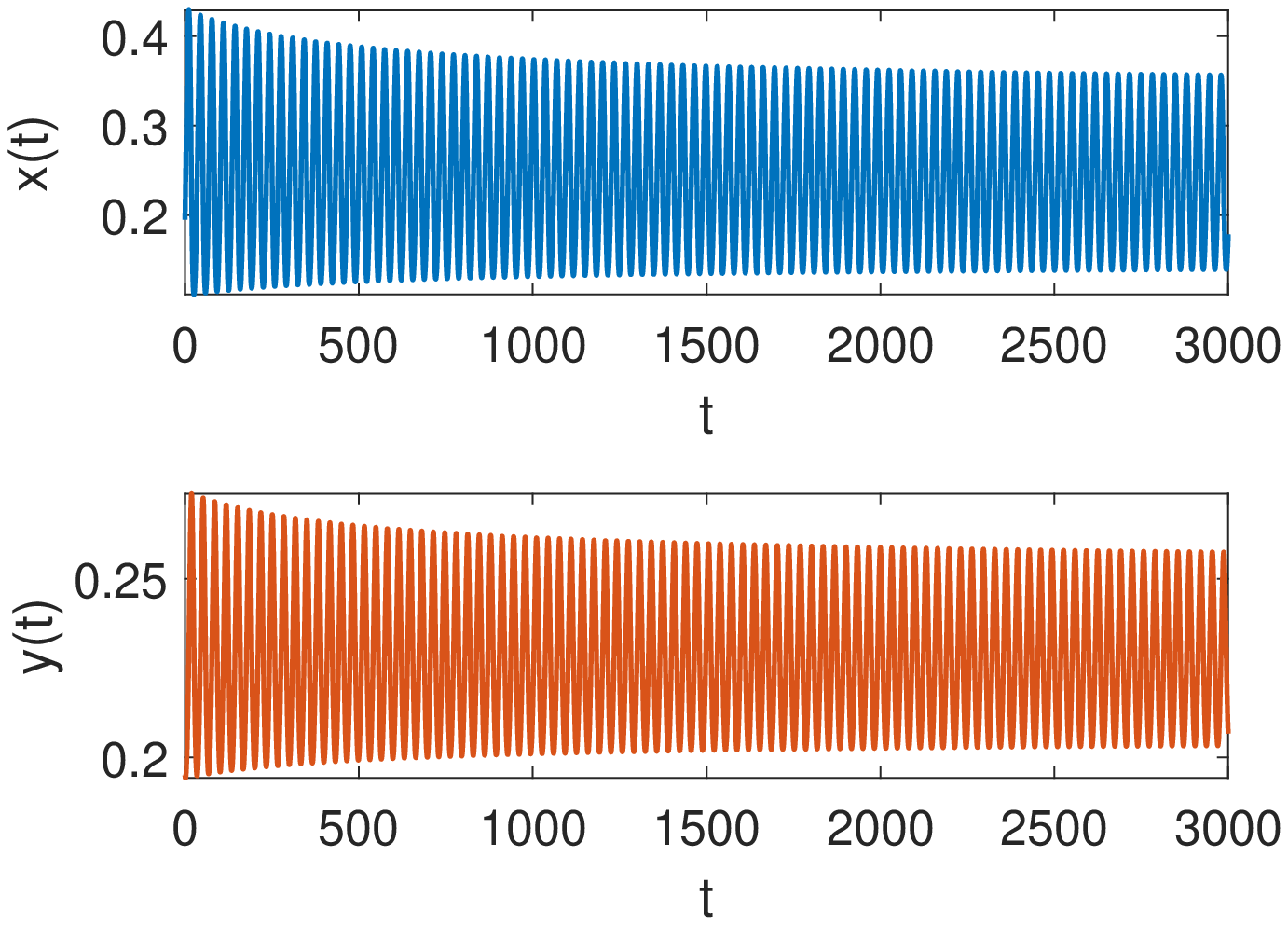}}\\
 \subfloat[Phase portrait for $e=0.171$\label{subfig-11}]{%
  \includegraphics[width=0.33\textwidth, height=4.5cm]{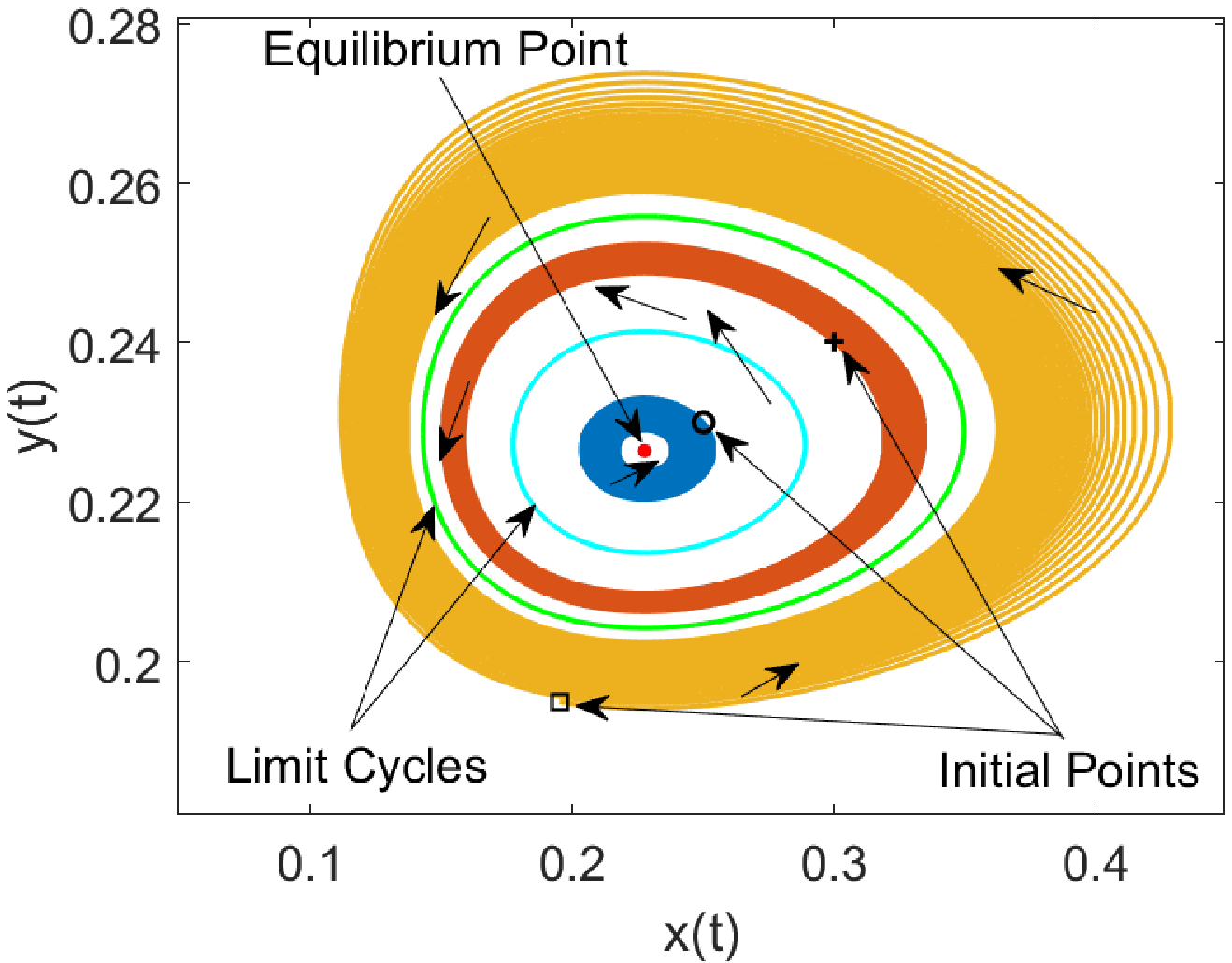}}
 \subfloat[Time series \& phase portrait for $e=0.28$\label{subfig-12}]{%
  \includegraphics[width=0.33\textwidth, height=4.5cm]{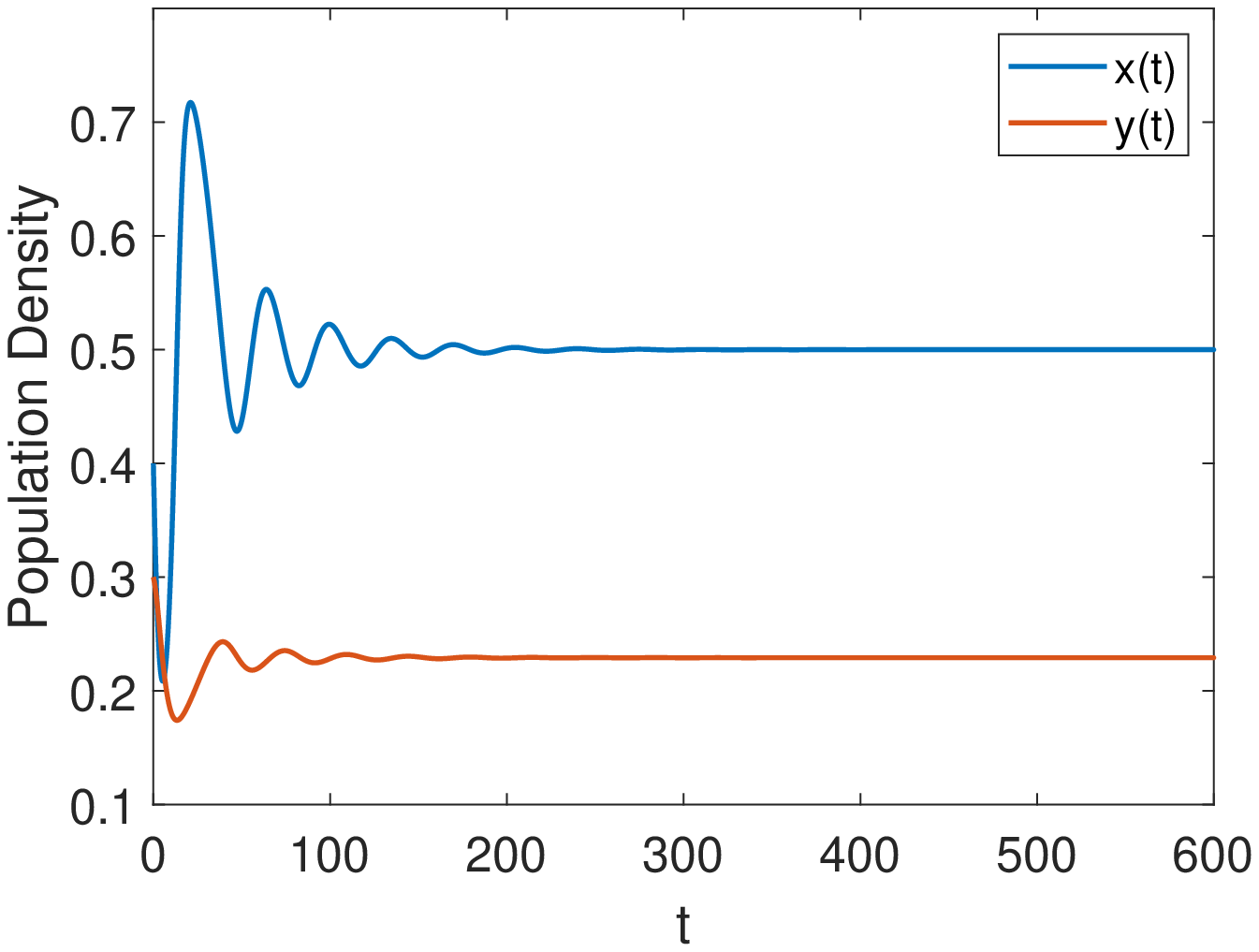}
  \includegraphics[width=0.33\textwidth, height=4.5cm]{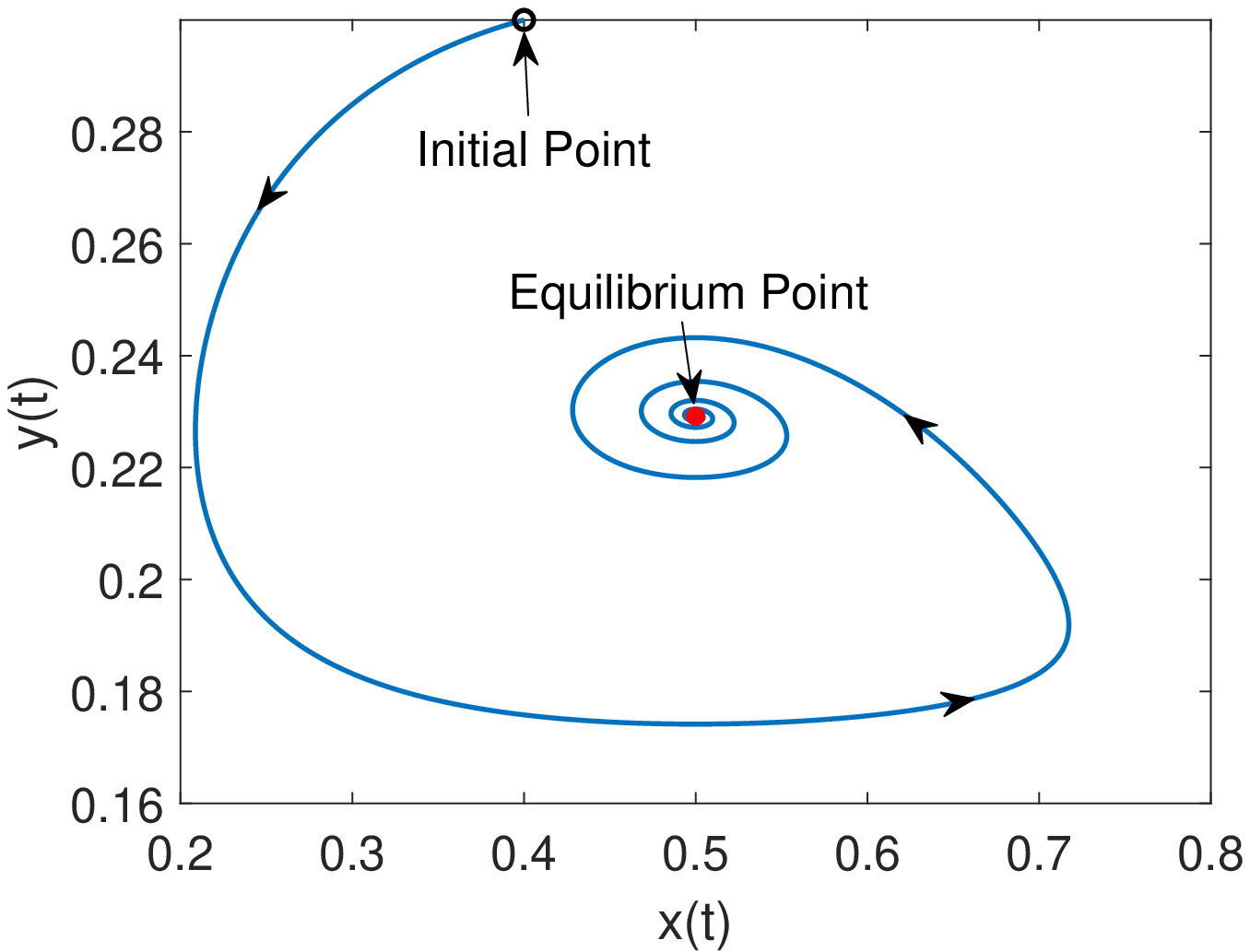}}
 \caption{{\bf(a)} The positive equilibrium of system (\ref{eq:sys}) undergoes Hopf bifurcation for $e_{H1}=0.08000347, e_{H2}=0.17116249$ \& $e_{H3}=0.2554956$ when $\varrho=0$. Limit point of cycles(LPC) occurs at $e_{LPC}=0.17092059$. {\bf(b)} $E$ is stable for $e=0.12<e_{LPC}$, where the initial point of the simulation is $\circ(0.4,0.3)$. {\bf(c)} $E$ is unstable and there exists a stable limit cycle for $e_{H2}<e=0.2<e_{H3}$, where the initial points of the simulation are $\circ(0.4,0.3)$ \& $\diamond(0.3,0.22)$. {\bf(d)} Time series for $e_{LPC}<e=0.171<e_{H2}$. {\bf(e)} Phase portrait corresponding to time series plotted in (d) with initial conditions $\circ(0.25,0.23)$, $+(0.3,0.24)$ \& $\square(0.195,0.195)$, where the equilibrium point is stable, the inner limit cycle is unstable and the outer limit cycle is stable. {\bf(f)} The equilibrium point $E$ is stable for $e=0.28>e_{H3}$, where the initial point is $\circ(0.4,0.3)$.}
 \label{fig_e}
\end{figure}
The bifurcation diagram for system (\ref{eq:sys}) along with time series and phase portrait for different values of $e$ are plotted in figure \ref{fig_a} which shows the occurrence of LPC-bifurcation at $a=a_{LPC}$ and three Hopf bifurcations at $E$ for $a=a_{H1}, a_{H2}$ \& $a_{H3}$ when $\varrho=0$ with $m=1.2$, $p=2$, $c=0.3$, $d=0.4$ and $e=0.2$. Clearly, $a>0$. The values of $a$ must be less than $0.5$ for the positive equilibrium point to exist.
\begin{figure}[H]
 \subfloat[Bifurcation diagram\label{subfig-13}]{%
  \includegraphics[width=0.35\textwidth, height=4.5cm]{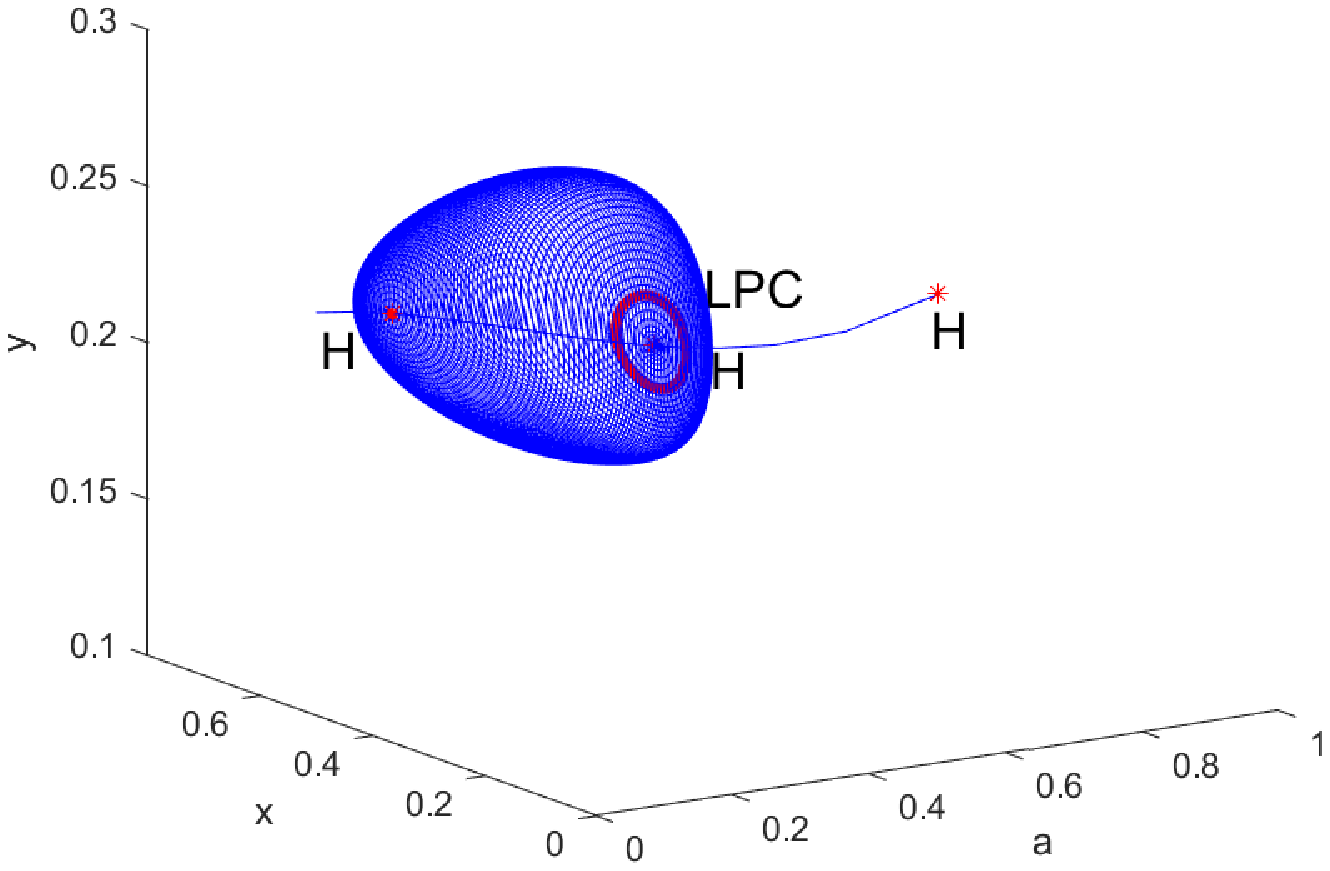}}\\
 \subfloat[Time series and phase portrait for $a=0.04$\label{subfig-14}]{%
  \includegraphics[width=0.25\textwidth, height=3.5cm]{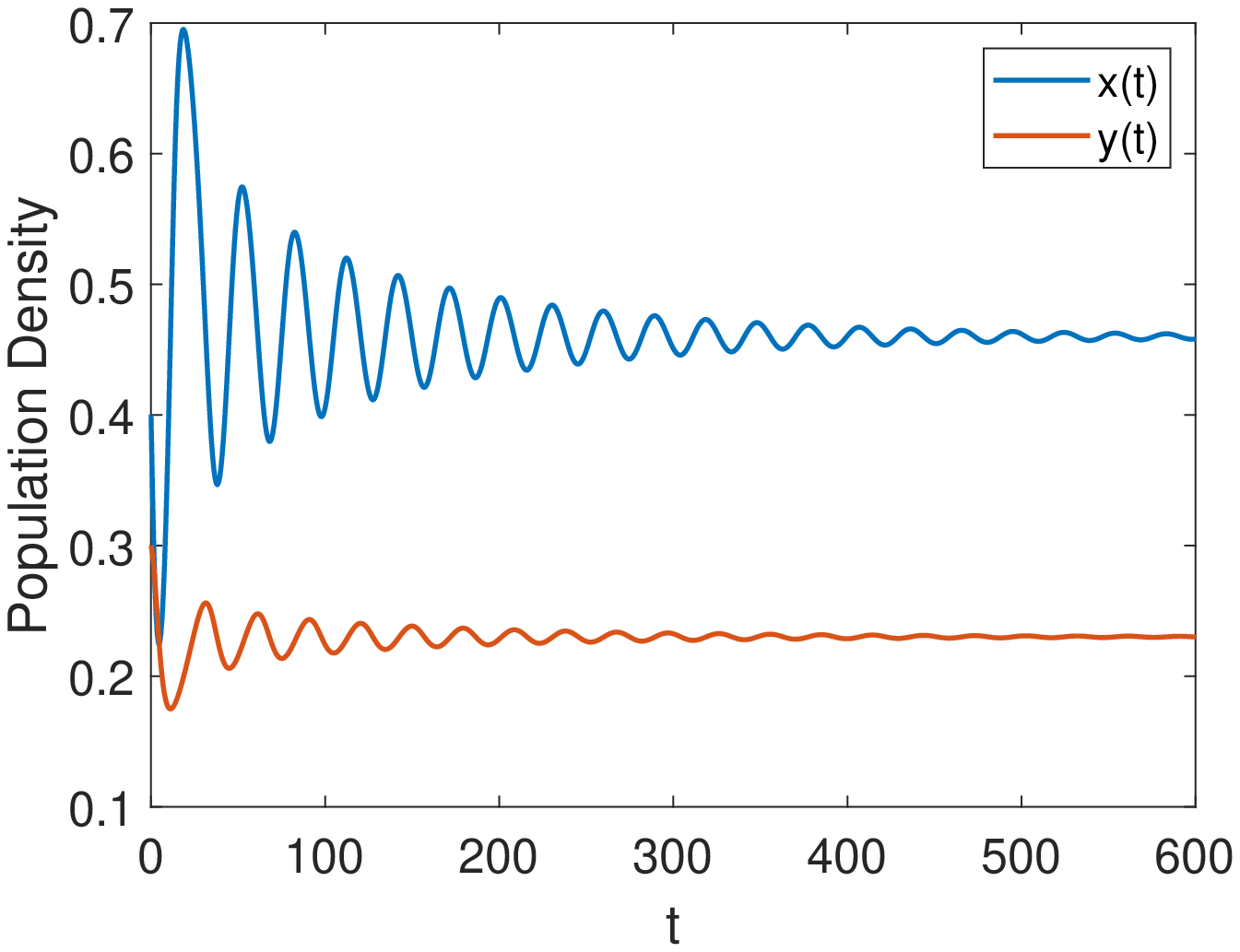}
  \includegraphics[width=0.25\textwidth, height=3.5cm]{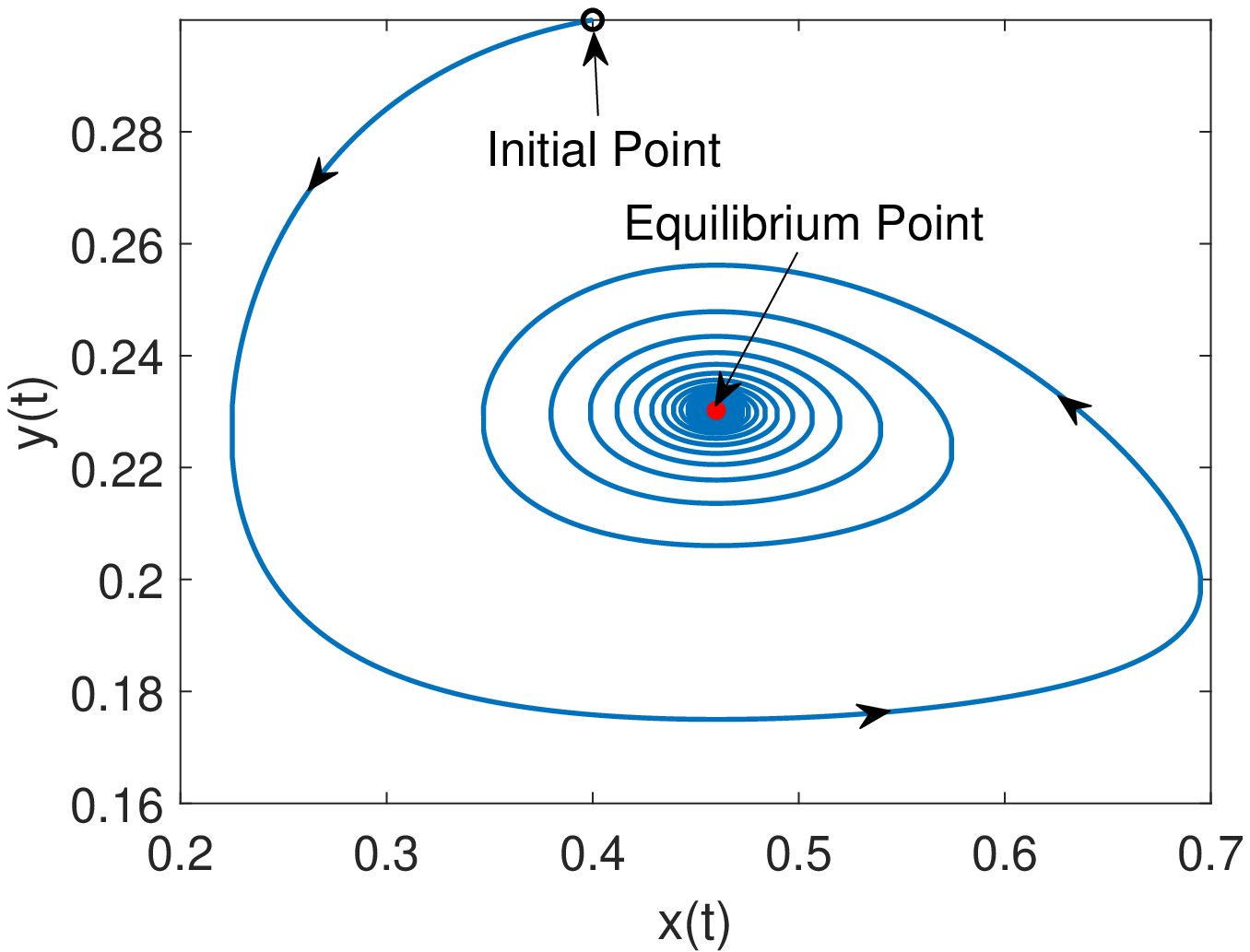}}
 \subfloat[Time series and phase portrait for $a=0.1$\label{subfig-14}]{%
  \includegraphics[width=0.25\textwidth, height=3.5cm]{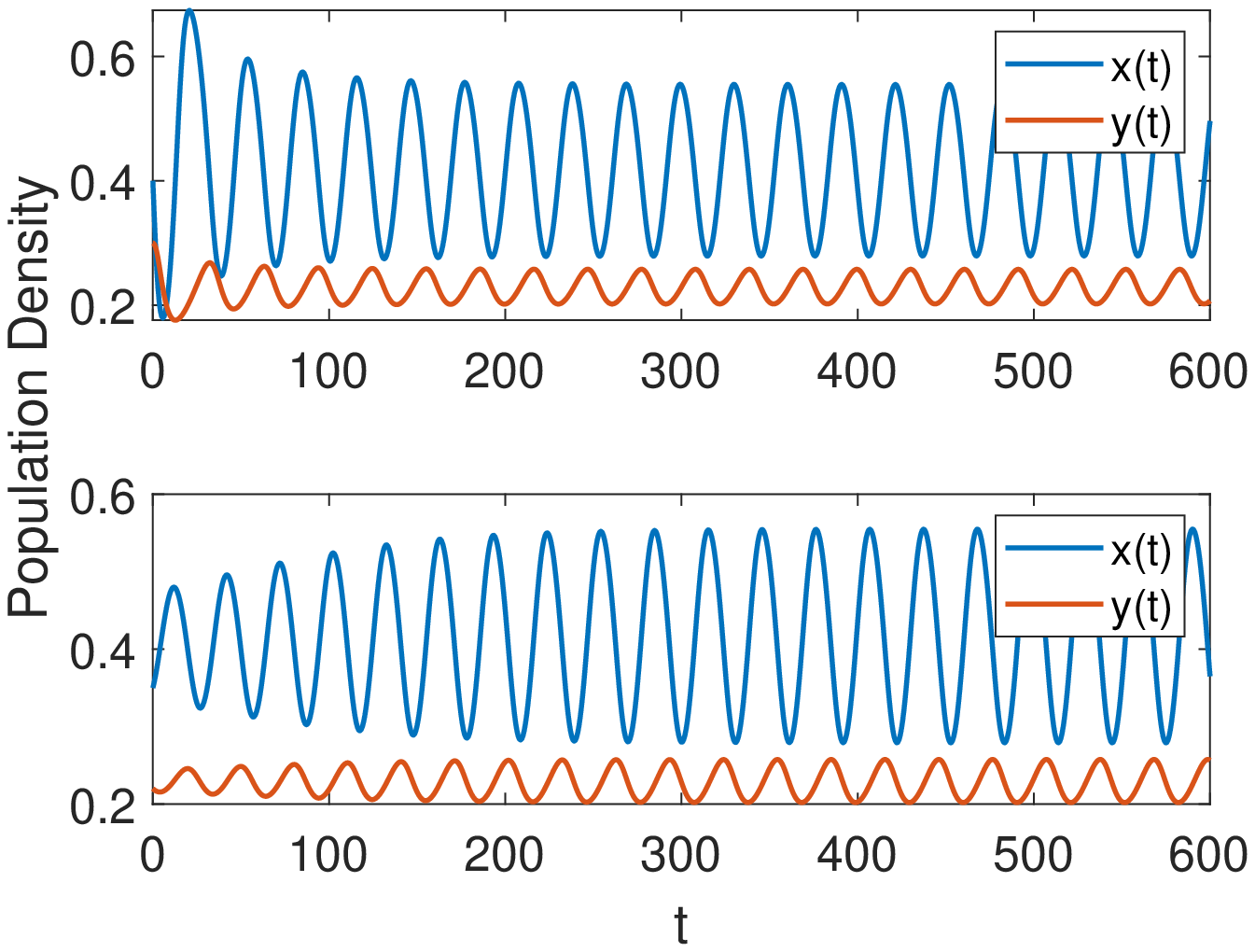}
  \includegraphics[width=0.25\textwidth, height=3.5cm]{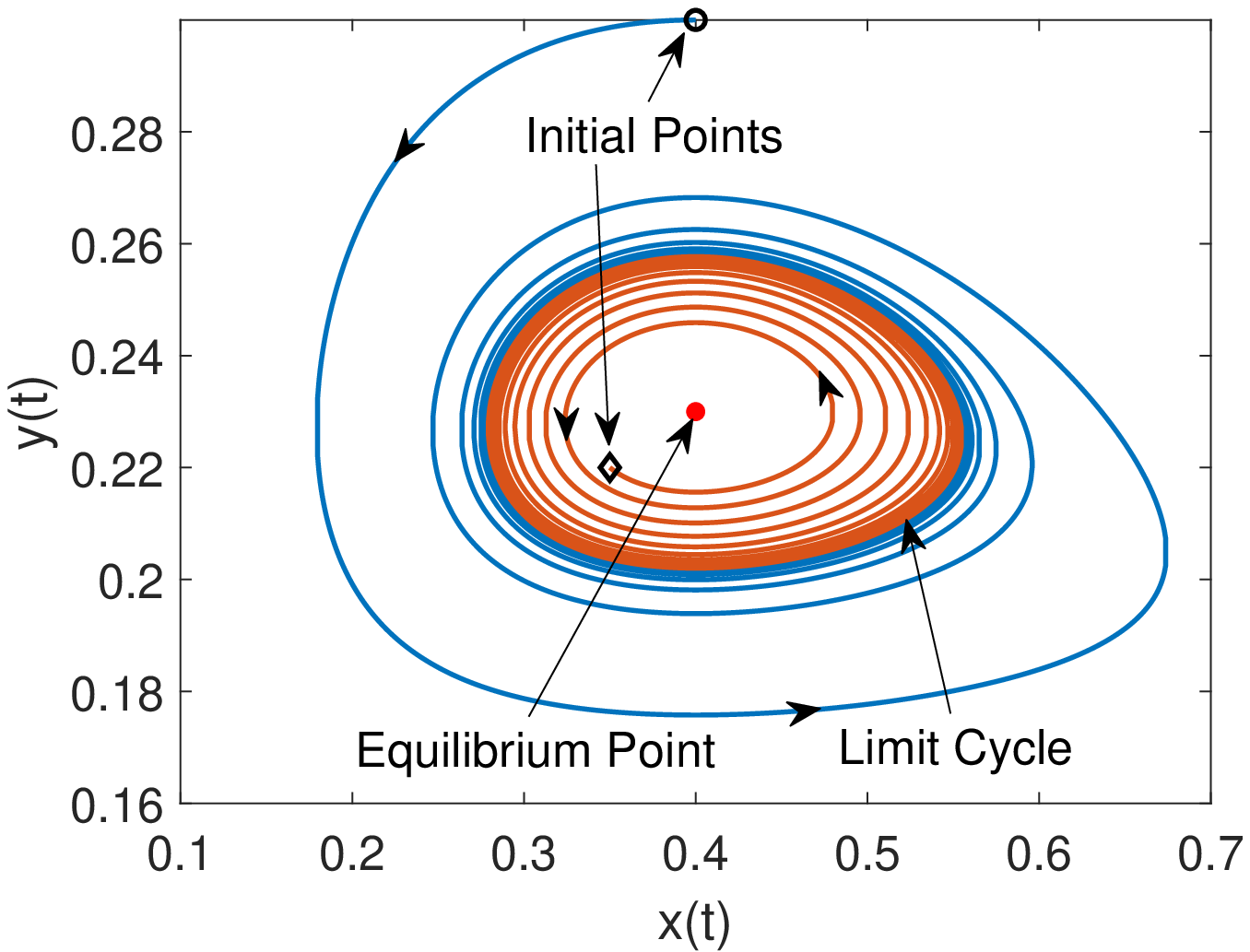}}\\
 \subfloat[Time series for $a=0.2722$\label{subfig-11}]{%
  \includegraphics[width=0.33\textwidth, height=4.5cm]{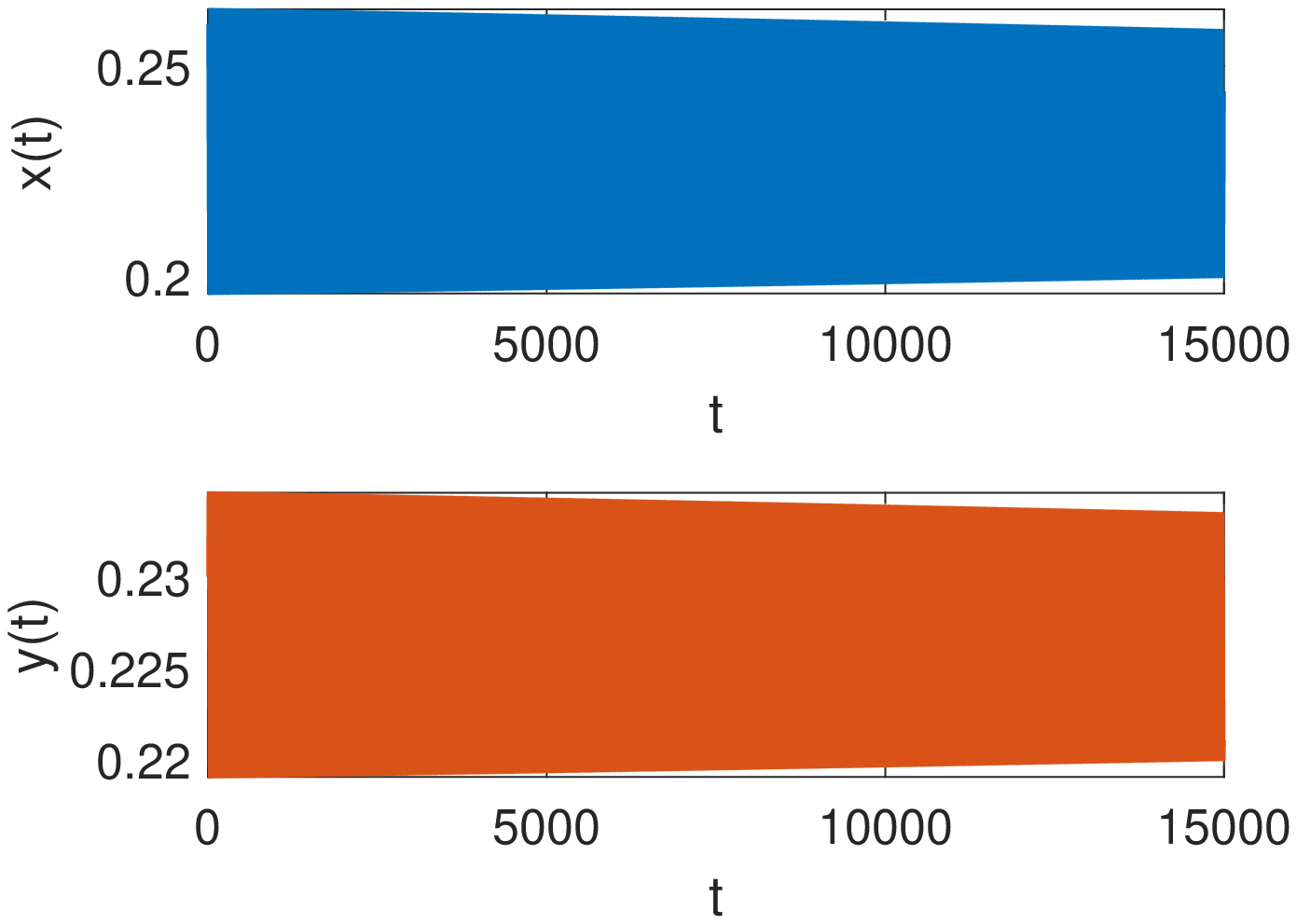}
  \includegraphics[width=0.33\textwidth, height=4.5cm]{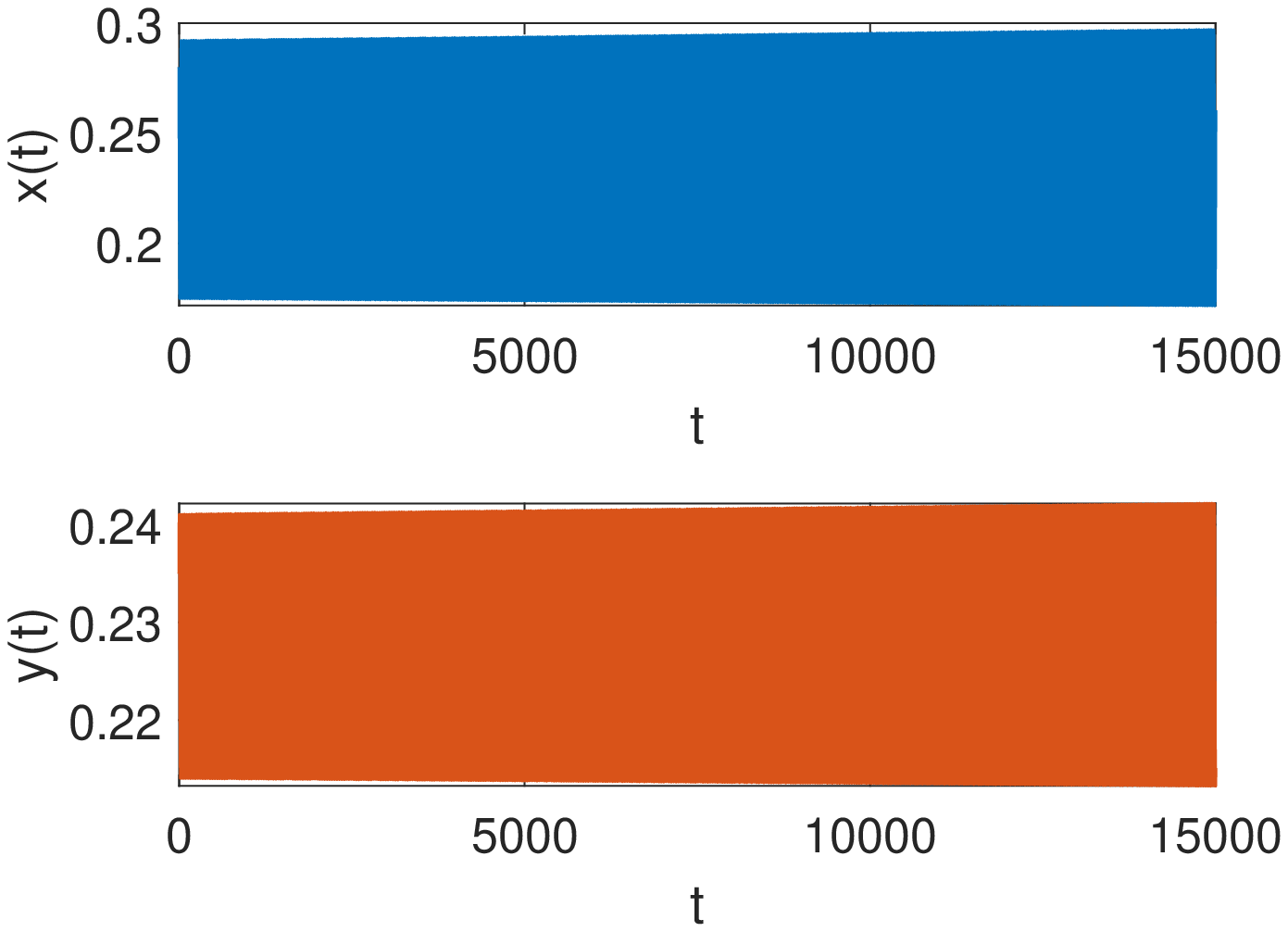}
  \includegraphics[width=0.33\textwidth, height=4.5cm]{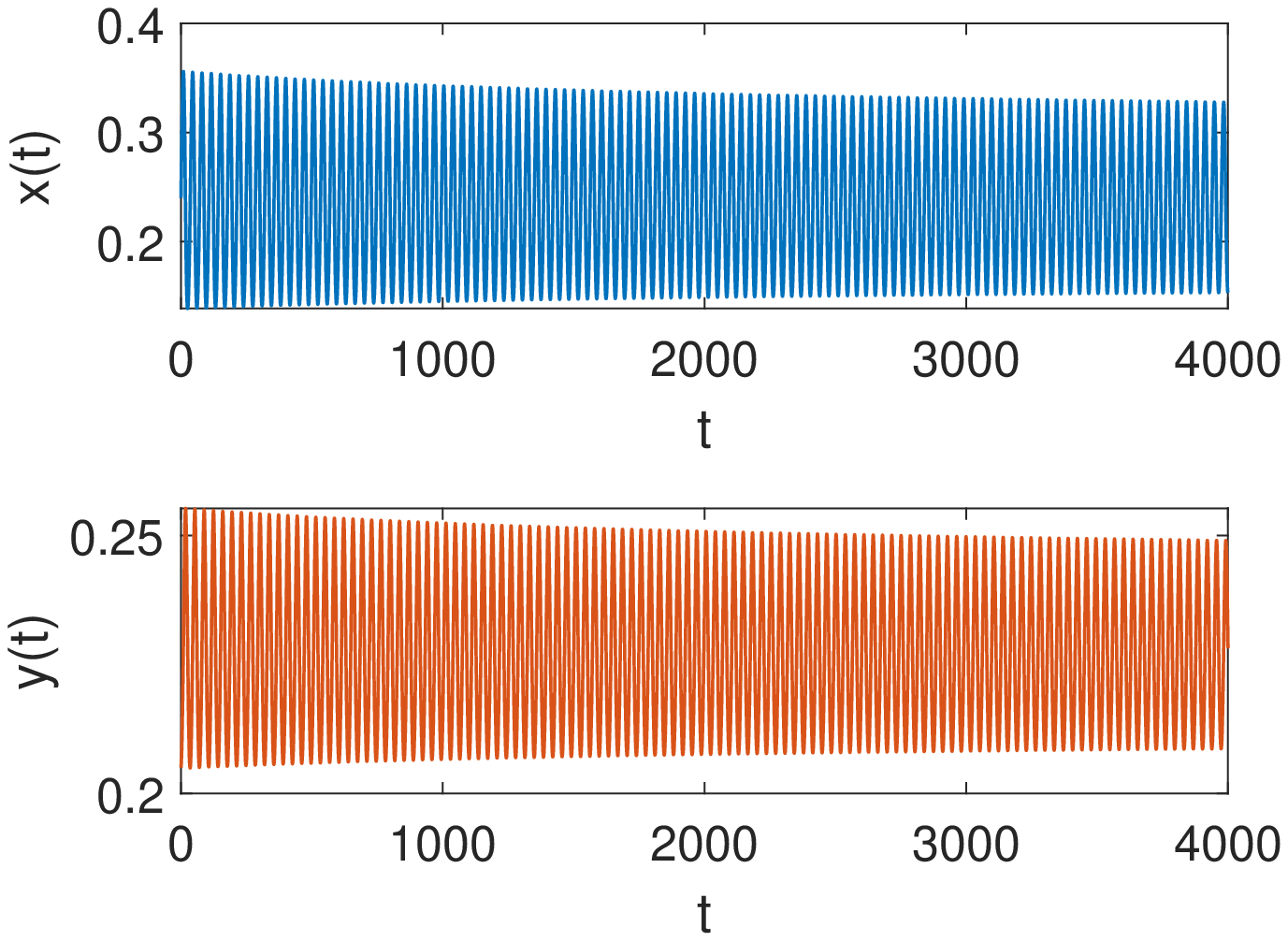}}\\
 \subfloat[Phase portrait for $a=0.2722$\label{subfig-11}]{%
  \includegraphics[width=0.33\textwidth, height=4.5cm]{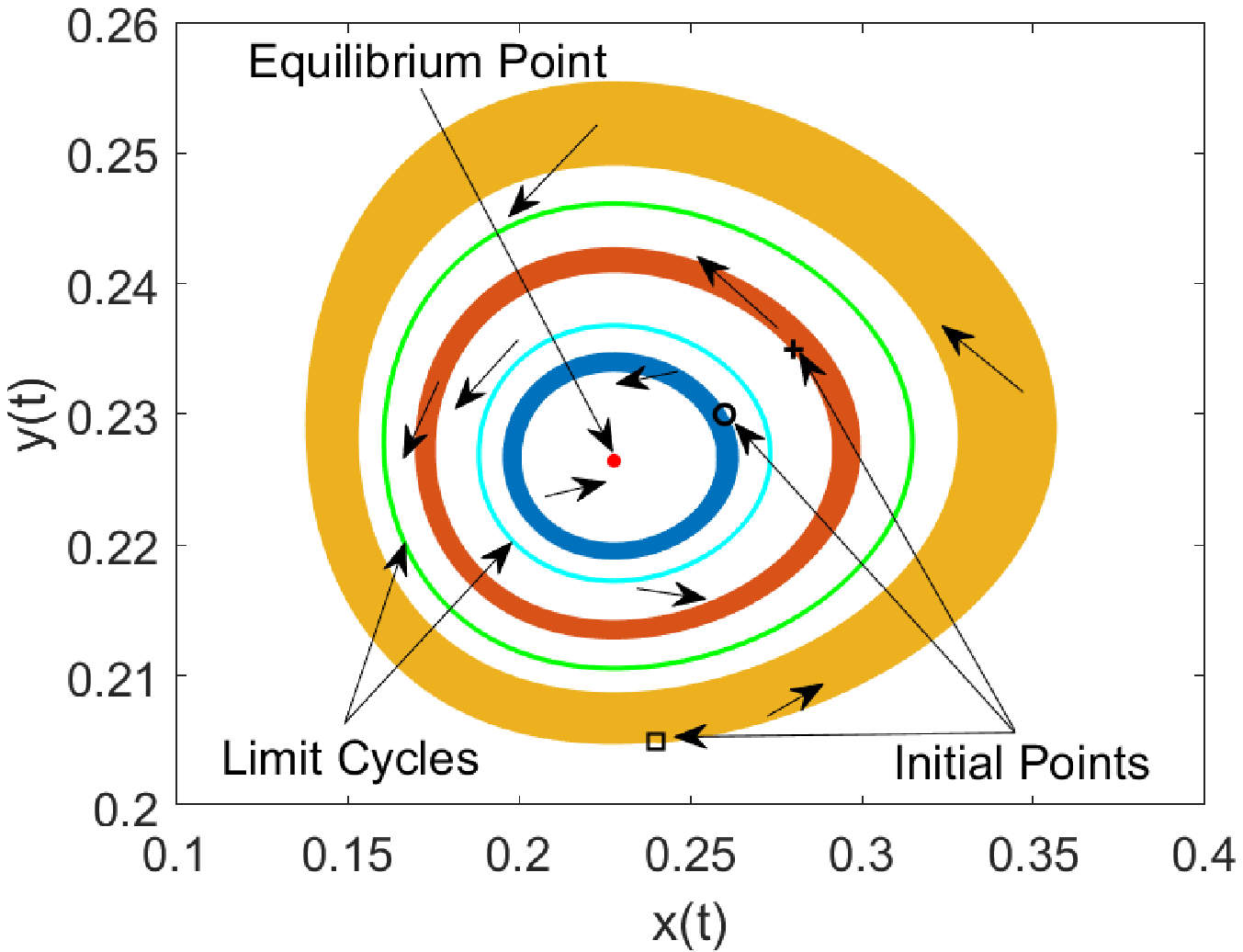}}
 \subfloat[Time series \& phase portrait for $a=0.35$\label{subfig-12}]{%
  \includegraphics[width=0.33\textwidth, height=4.5cm]{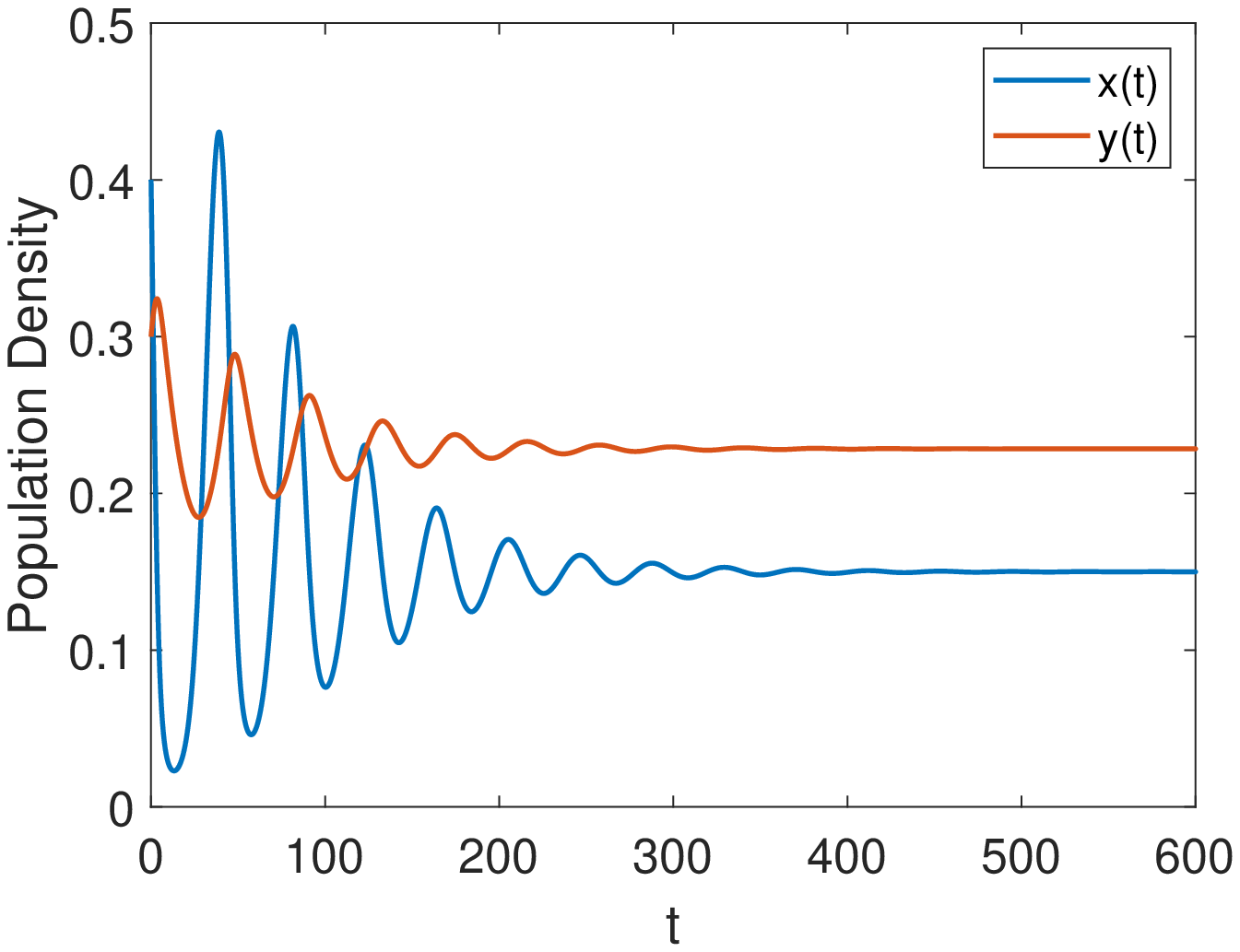}
  \includegraphics[width=0.33\textwidth, height=4.5cm]{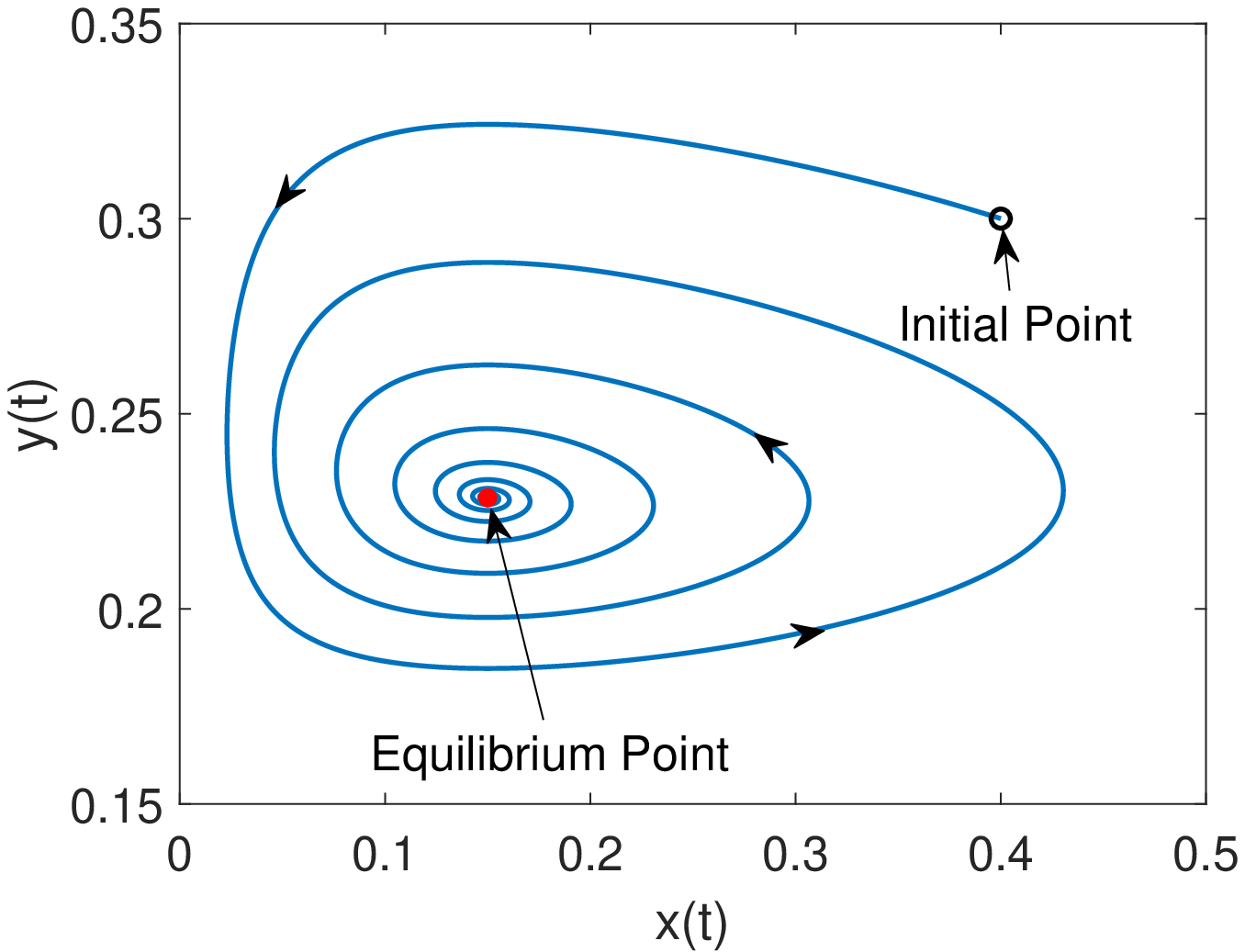}}
 \caption{{\bf(a)} The positive equilibrium of system (\ref{eq:sys}) undergoes Hopf bifurcation for $a_{H1}=0.061449, a_{H2}=0.27209046$ \& $a_{H3}=0.49999777$ when $\varrho=0$. Limit point of cycles(LPC) occurs at $a_{LPC}=0.27224979$. {\bf(b)} $E$ is stable for $a=0.04<e_{H1}$, where the initial point of the simulation is $\circ(0.4,0.3)$. {\bf(c)} $E$ is unstable and there exists a stable limit cycle for $a_{H1}<a=0.1<a_{H2}$, where the initial points of the simulation are $\circ(0.4,0.3)$ \& $\diamond(0.35,0.22)$. {\bf(d)} Time series for $a_{H2}<a=0.2722<a_{LPC}$. {\bf(e)} Phase portrait corresponding to time series plotted in (d) with initial conditions $\circ(0.26,0.23)$, $+(0.28,0.235)$ \& $\square(0.24,0.205)$, where the equilibrium point is stable, the inner limit cycle is unstable and the outer limit cycle is stable. {\bf(f)} The equilibrium point $E$ is stable for $a_{H2}<a=0.35<a_{H3}$, where the initial point is $\circ(0.4,0.3)$.}
 \label{fig_a}
\end{figure}
The point ``H" in the above bifurcation diagrams shows the parameter value for which the non-delayed system enters into Hopf bifurcation and the red dot in the above phase portrait diagrams shows the positive equilibrium point $E$.

The corresponding non-delayed system of the system (\ref{eq:sys}) does not show Hopf-bifurcation with respect to the parameter $m$.

Using sensitivity analysis, one can determine which parameters influence the model output the most or the least. Consequently, influential parameters on the model output need to be assigned accurate values while less influential parameters suffice to have a rough estimate\cite{prcc1}. In this study, partial rank correlation coefficient (PRCC), a global sensitivity analysis technique proven to be the most reliable and efficient among sampling-based methods, is utilized. The PRCC addresses the effect of changes in a specific parameter (linearly discounting the influences over the other parameters) on the reference model output\cite{prcc2}.

The PRCC deals with the impact of changes in a particular parameter on the model output \cite{prcc1}.
So to get the PRCC estimations, Latin Hypercube Sampling (LHS) is picked for the input
parameters where stratified sampling without substitution is performed. In the present study,
uniform dissemination is assigned to each model parameter and sampling is done autonomously.
The range for every parameter is at first set to $\pm 25\%$ of the nominal values (Fig. \ref{fig_prcc}). It is seen that PRCC values lie between $-0.8$ to $0.7$ for the non-delayed model. A positive value of PRCC indicates a positive correlation of the parameter with the model output, where negative values indicate the negative correlation. A positive correlation implies that a positive change in the parameter will increase the model output. Similarly, the negative correlation implies that a negative change in the parameter will decrease the model output. The PRCC values for the non-delayed model is depicted as bar graphs in Fig. \ref{fig_prcc1} and its time evolution are illustrated in Fig. \ref{fig_prcc2} respectively.

\begin{figure}[H]
 \subfloat[\label{fig_prcc1}]{%
  \includegraphics[width=0.5\textwidth, height=5cm]{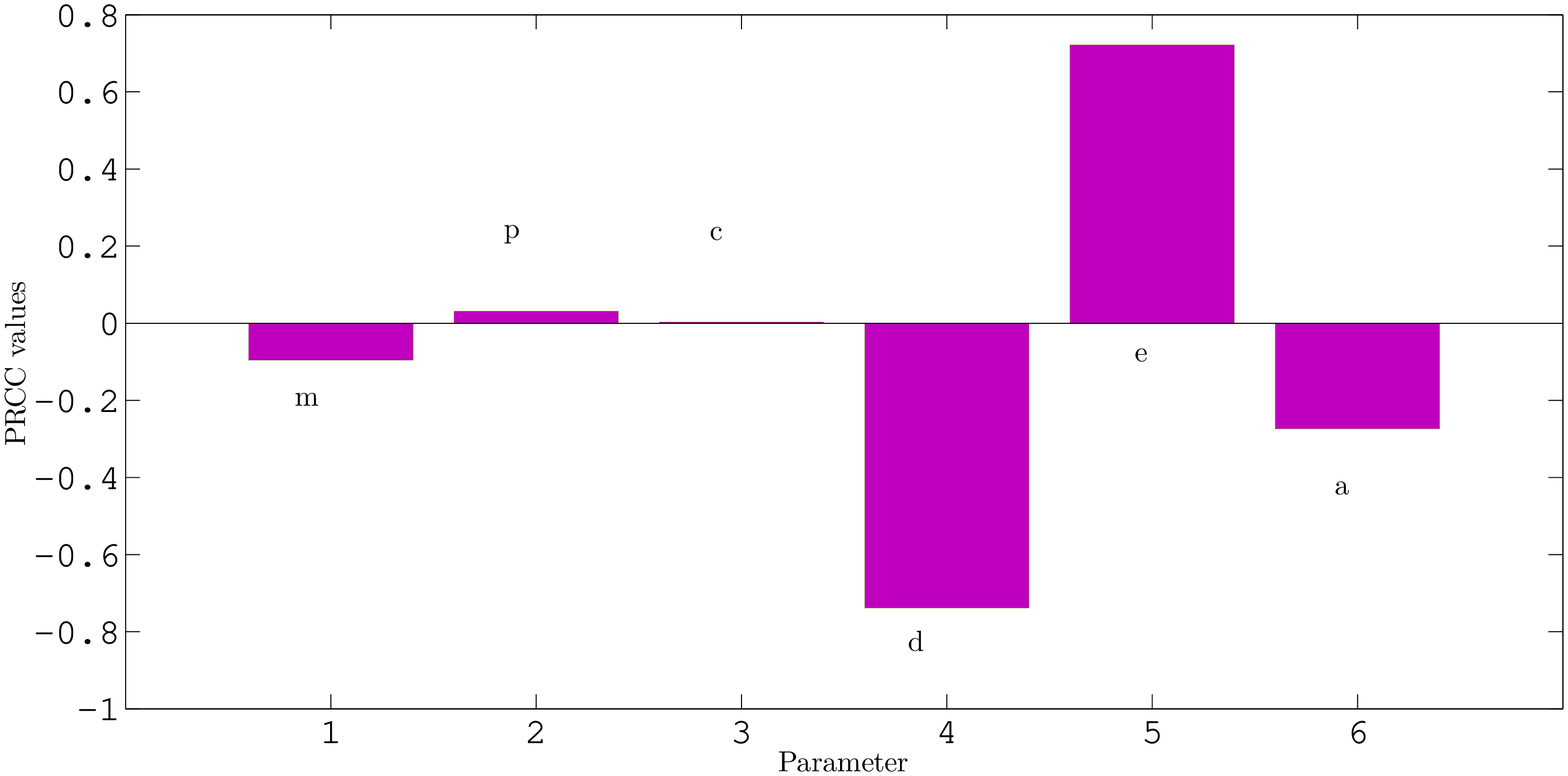}}
 \subfloat[\label{fig_prcc2}]{%
  \includegraphics[width=0.5\textwidth, height=5cm]{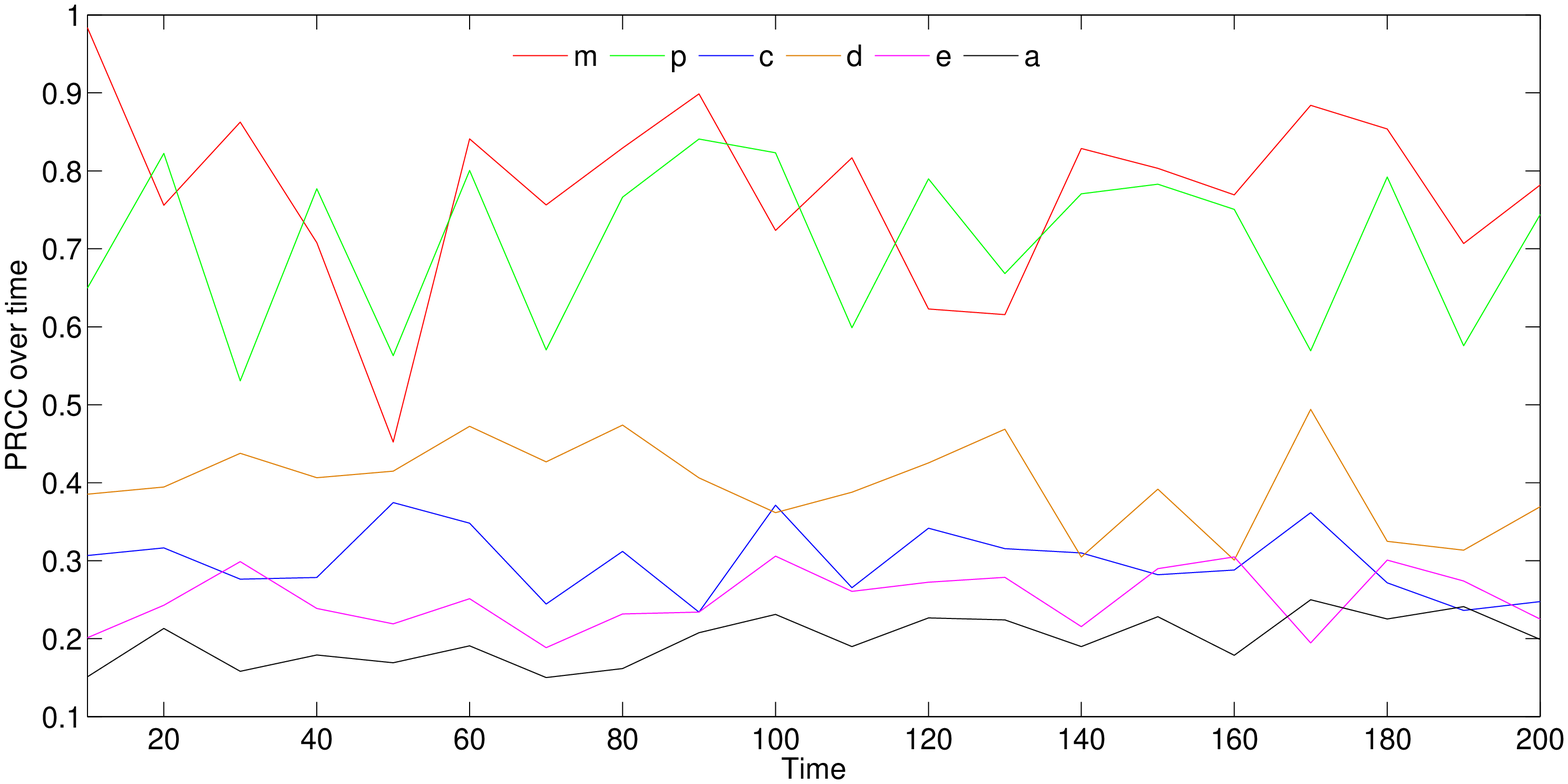}}
 \caption{In this figure the partial rank correlation coefficients (PRCCs) of the model parameters at different time points has been plotted in absence of delay. In Figure (a), PRCC of the model parameters has been shown and in Figure (b) PRCC has been plotted over time.}
 \label{fig_prcc}
\end{figure}
\section{Feedback Control}\label{secind}
In ecology, control mechanisms are adopted for population and resource management. A desired population dynamic(density) can be achieved by controlling the population abundance. Sometimes control is applied to both predator and prey abundances while sometimes a single species is controlled(for example, prey) which indirectly controls the other species(predator)\cite{nadim2018}. In a direct way, state control can be implemented to control the interacting populations by addition or removal(harvest) of species individuals\cite{loehle2006}.

The primary goal of using control is to maintain balance in the habitat by controlling population extinction or overpopulation. Drastic oscillations can pose an extinction threat to both species and may not lead to a co-existence state\cite{nadim2018,zhang2016}. So controlling such oscillations and driving the dynamics towards an equilibrium point, we can remove the risk.

In this section, we will introduce feedback control to the system to control instability or change the dynamics of the system. Let $u$ be the control parameter. With the use of linear feedback control, system (\ref{eq:sys}) is modified as
\begin{align}\label{controlsys}
&\frac{dx}{dt}\;\,=x(1-x(t-\varrho))-\frac{mxy}{x^p+c}-u(x-x^*), \nonumber\\
&\frac{dy}{dt}\;\,=\left(d-\frac{e}{x+a}\right)y^2-u(y-y^*).
\end{align}
where $(x^*,y^*)$ is interior equilibrium point of the system (\ref{eq:sys}) which is equivalent to system (\ref{controlsys}) when $u=0$. Now, $(x^*,y^*)$ is also the interior equilibrium point of system (\ref{controlsys}).

Now, translating the interior equilibrium point of the system to origin and linearizing the system around it, let $A$ be the obtained Jacobian matrix, then
\begin{equation}\label{controljac}
A=\begin{bmatrix}
A_{11}-x^*\e^{-\lambda\varrho}-u & -A_{12}\\
A_{21} & -u
\end{bmatrix}
\end{equation}
where $$A_{11}=\frac{mpy^*x^{*^p}}{(x^{*^p}+c)^2},\quad A_{12}=\frac{mx^*}{x^{*^p}+c},\quad \text{and}\quad A_{21}=\frac{d^2y^{*^2}}{e}.$$
Hence, its characteristic equation is given by,
\begin{equation}\label{controlchar}
H(\lambda)\equiv\lambda^2+a_1\lambda+a_2+\e^{-\lambda\varrho}(b_1\lambda+b_2)=0
\end{equation}
where
$$a_1=2u-A_{11},\quad a_2=A_{12}A_{21}+u^2-A_{11}u,$$
$$b_1=x^*,\quad b_2=ux^*.$$
Studying the nature of the roots of (\ref{controlchar}), we can have the stability property of the controlled system.\\
\underline{\bf Case - 1}:
Let $\varrho=0$. Then the characteristic equation reduces to
\begin{equation}\label{controlchar0}
\lambda^2+(a_1+b_1)\lambda+(a_2+b_2)=0.
\end{equation}
From (\ref{controlchar0}), the Hurwitz matrix is given by
\begin{equation*}
A=\begin{bmatrix}
a_1+b_1 & 1\\
0 & a_2+b_2
\end{bmatrix}
\end{equation*}
whose minors are positive when $a_1+b_1>0$ and $a_2+b_2>0$. These give rise to the conditions
\begin{eqnarray}\label{delay0cond}
u>\frac{1}{2}(A_{11}-x^*),\nonumber\\
u^2-(A_{11}-x^*)u+A_{12}A_{21}>0.
\end{eqnarray}
Therefore, $(x^*,y^*)$ is stable when inequalities in (\ref{delay0cond}) are satisfied.

\begin{figure}[H] 
 \subfloat[$u=0$\label{u_nodelay}]{%
  \includegraphics[width=0.5\textwidth, height=5cm]{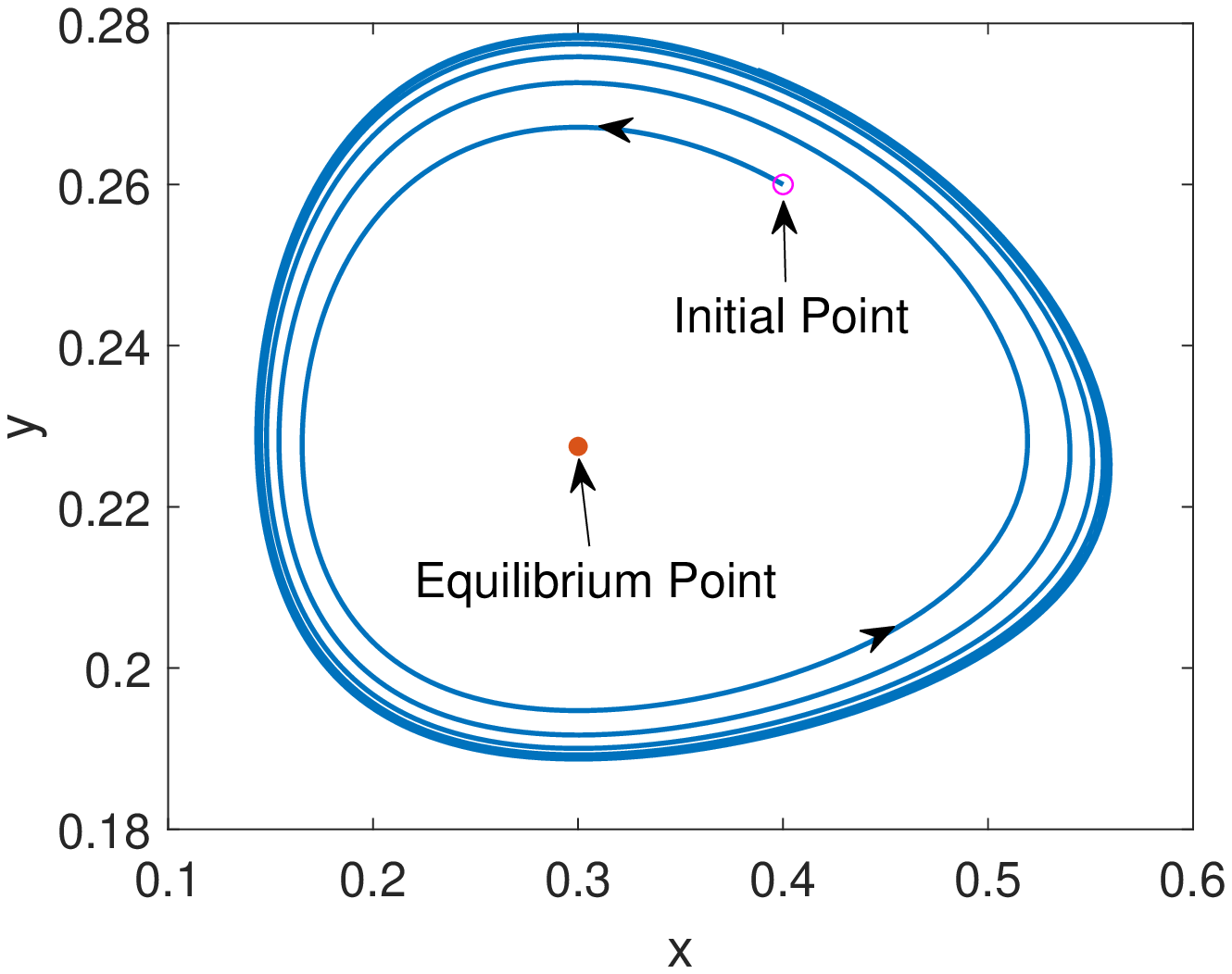}}
 \subfloat[$u=0.5$]{%
  \includegraphics[width=0.5\textwidth, height=5cm]{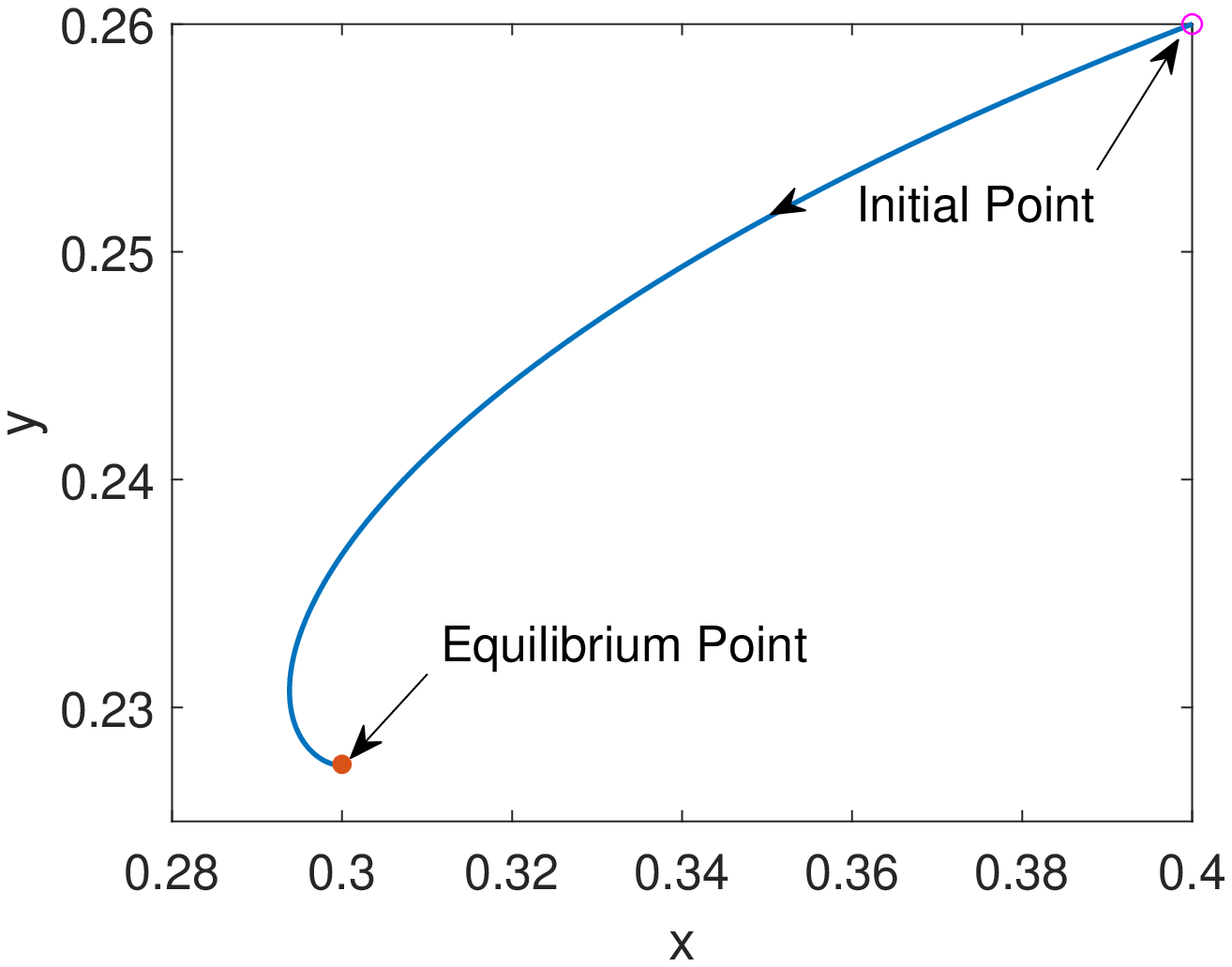}}
 \caption{The initial point of the simulation is $(0.4,0.26)$ and the equilibrium point is $(0.3,0.1108)$. (a) The equilibrium point is unstable for $u=0$. (b) The equilibrium point becomes asymptotically stable for $u=0.5$.}
 \label{nodelay_u_val}
\end{figure}

With the help of MATLAB, considering the value of $u$ in the interval $[-10,10]$, we observe, conditions in (\ref{delay0cond}) are satisfied for $u\ge 0.0115$ when the system parameters are chosen as $m=1.2$, $p=2$, $c=0.3$, $d=0.4$, $e=0.2$ and $a=0.2$ in the absence of delay. Hence, the positive equilibrium point becomes stable for $u\ge 0.0115$.\\
\underline{\bf Case - 2}:
Now, let $\varrho\neq 0 $. Let $\lambda=i\omega$, $i=\sqrt{-1}$, then from (\ref{controlchar}),
\begin{eqnarray*}
\Real H(i\omega) &=& -\omega^2+x^*\omega\sin{(\omega\varrho)}+u^2-A_{11}u+A_{12}A_{21}+x^*u\cos{(\omega\varrho)},\\
\Imag H(i\omega) &=& (2u-A_{11})\omega+x^*\omega\cos{(\omega\varrho)}-x^*u\sin{(\omega\varrho)}.
\end{eqnarray*}
From \cite{erbe1986} 
and Nyquist criterion\cite{bb2006}, the conditions of local asymptotic stability are given by
\begin{eqnarray}\label{omega0}
\Real H(i\omega_0)=0\\
\Imag H(i\omega_0)>0
\end{eqnarray}
where, $H(\lambda)=0$ is the characteristic equation given in (\ref{controlchar}) and $\omega_0$ is the smallest positive root of equation (\ref{omega0}).
Therefore, in the presence of delay, the asymptotic stability of $(x^*,y^*)$ is guaranteed when
\begin{flalign}\label{controlstabcond}
u^2-A_{11}u+A_{12}A_{21}+x^*u\cos{(\omega_0\varrho)}-\omega_0^2+x^*\omega_0\sin{(\omega_0\varrho)}=0,\nonumber\\
(2u-A_{11})\omega_0+x^*\omega_0\cos{(\omega_0\varrho)}-x^*u\sin{(\omega_0\varrho)}>0
\end{flalign}
hold simultaneously.
\begin{figure}[H] 
 \subfloat[\label{u_delay_a}]{%
  \includegraphics[width=0.5\textwidth, height=5cm]{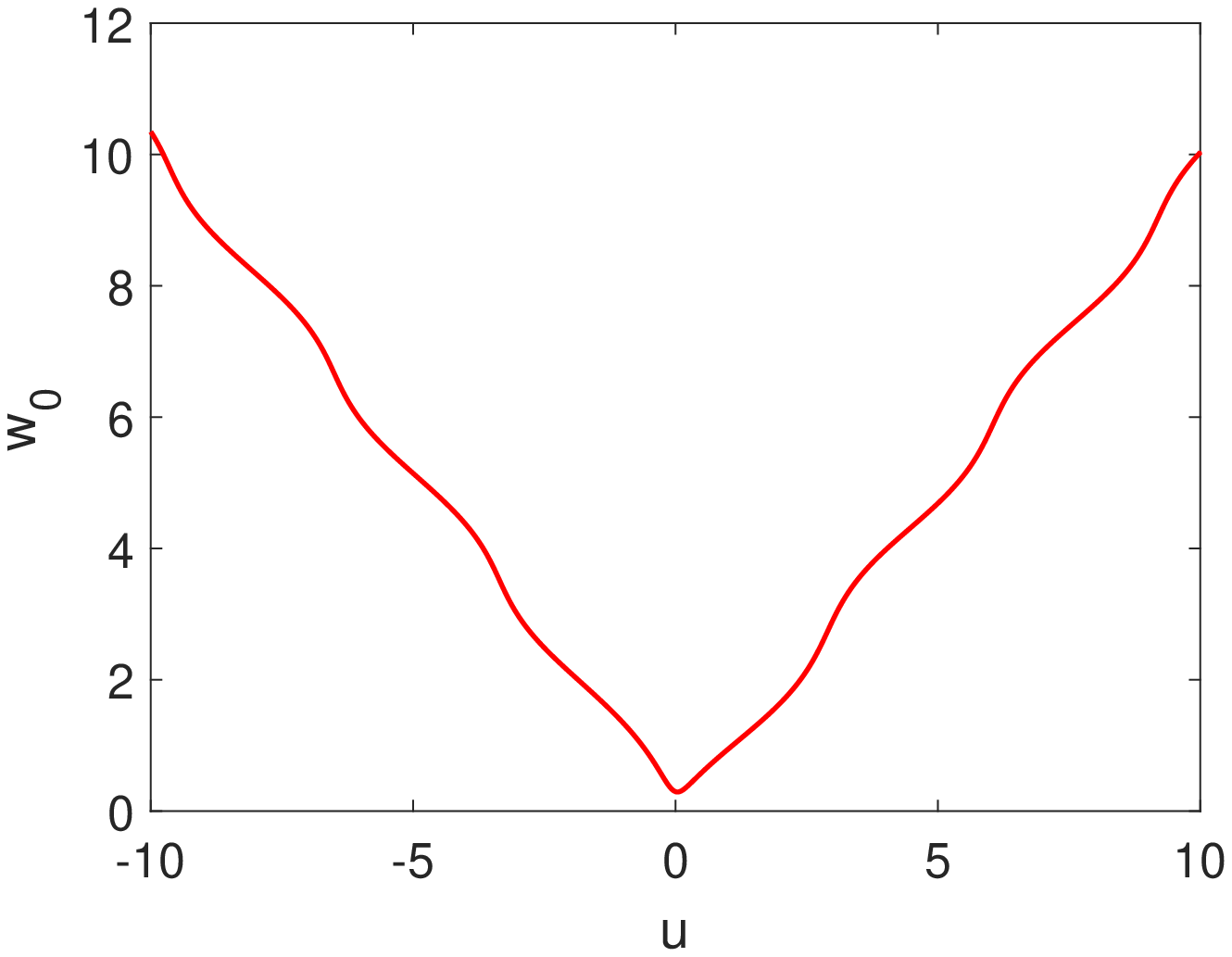}}
 \subfloat[]{%
  \includegraphics[width=0.5\textwidth, height=5cm]{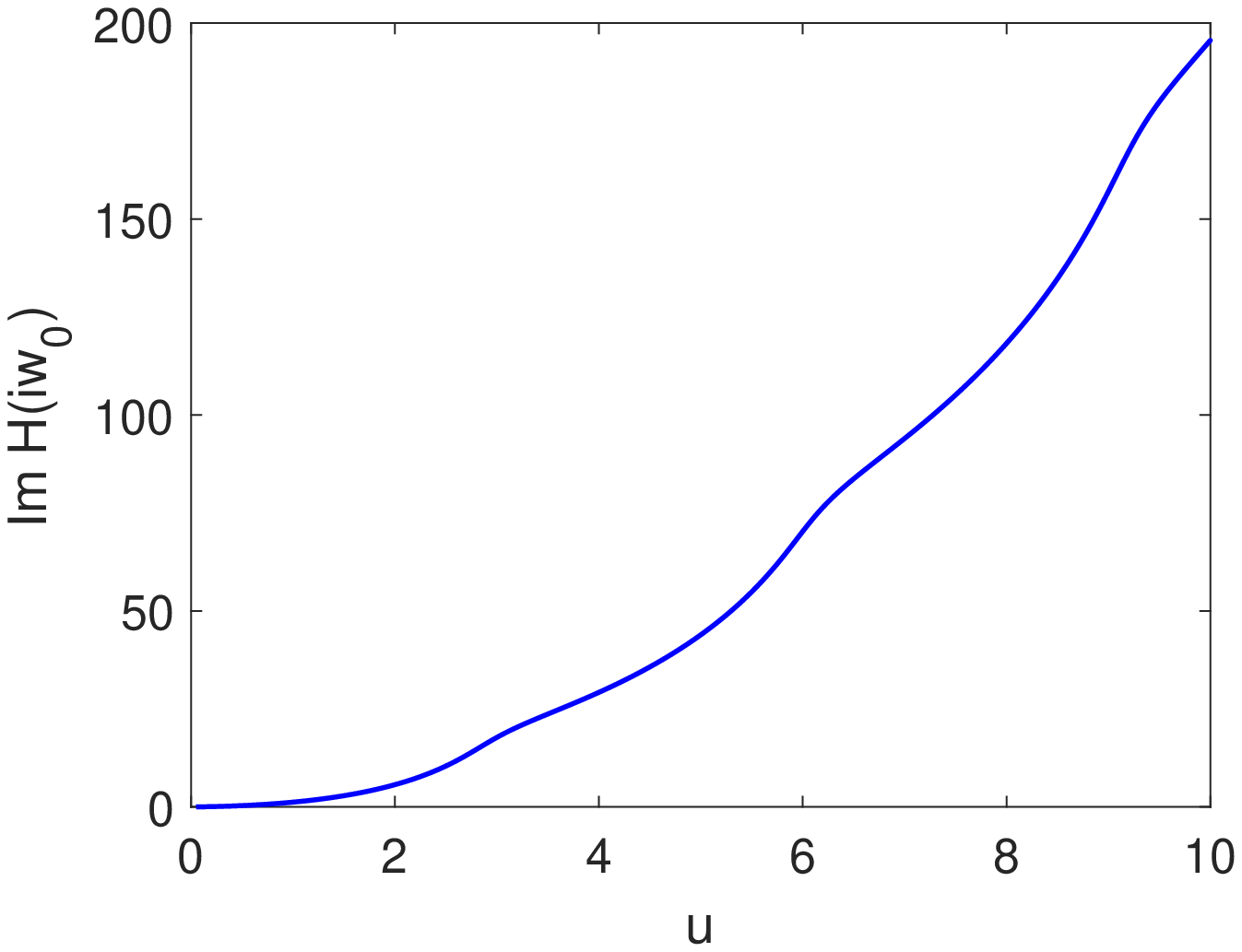}}
 \caption{(a) Control parameter $u$ versus positive zero of $\Real H(i\omega_0)$. (b) Control parameter $u$ versus $\Imag H(iw_0)$ for positive zeros obtained in (a).}
 \label{u_delay}
\end{figure}
The figure \ref{u_delay_a} shows the positive roots of $\Real H(iw_0)=0$ for different values of $u$. The numerical calculations are done using MATLAB. The calculations show that for $u\geq 0.051$, the positive equilibrium point of the controlled system becomes asymptotically stable which was earlier unstable. The parameters used here are $m=1.2$, $p=2$, $c=0.3$, $d=0.4$, $e=0.2$ and $a=0.2$. Time delay for the system is considered as $\varrho=2$.
\begin{figure}[H] 
 \subfloat[$u=0$\label{u_delay}]{%
  \includegraphics[width=0.5\textwidth, height=5cm]{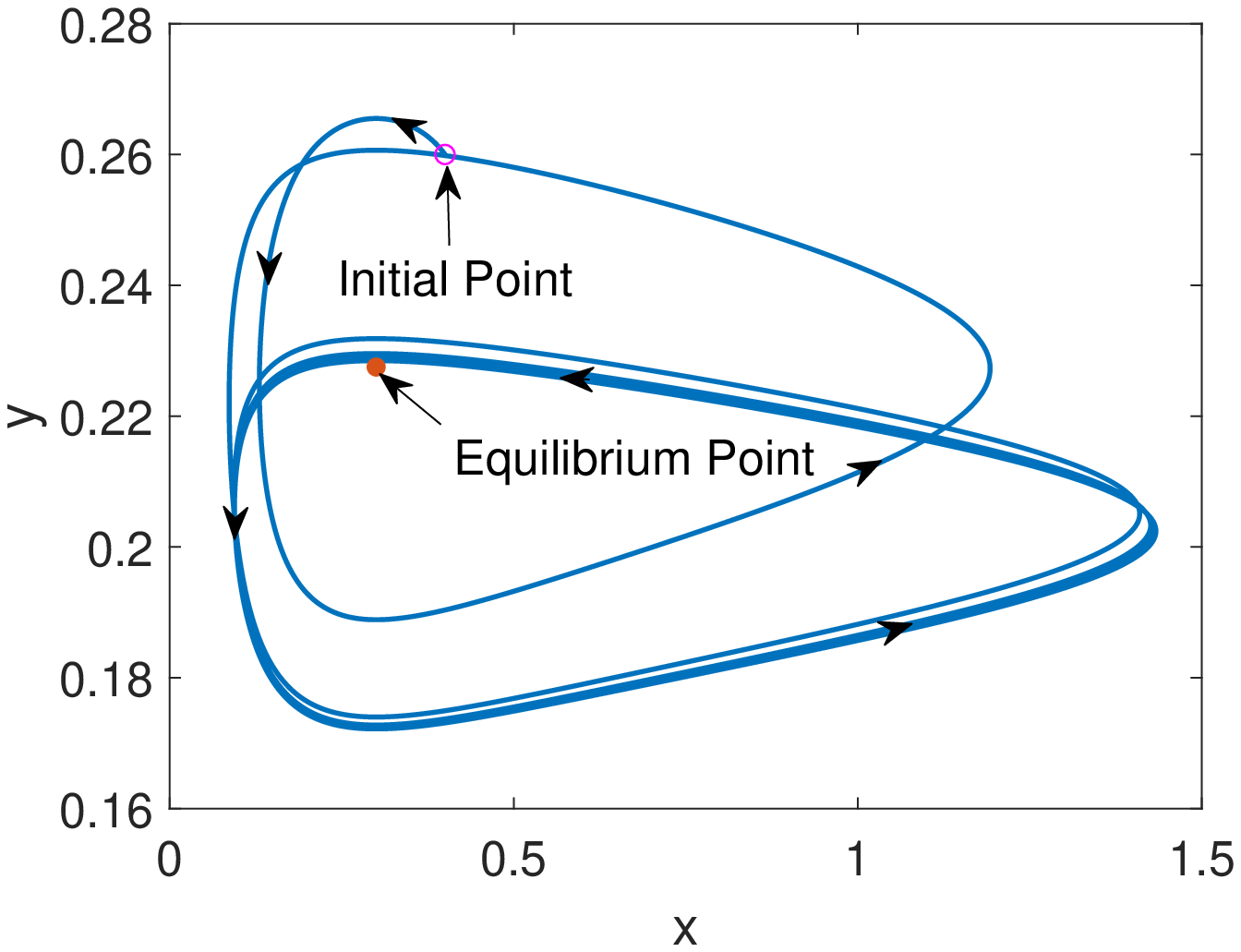}}
 \subfloat[$u=0.04$]{%
  \includegraphics[width=0.5\textwidth, height=5cm]{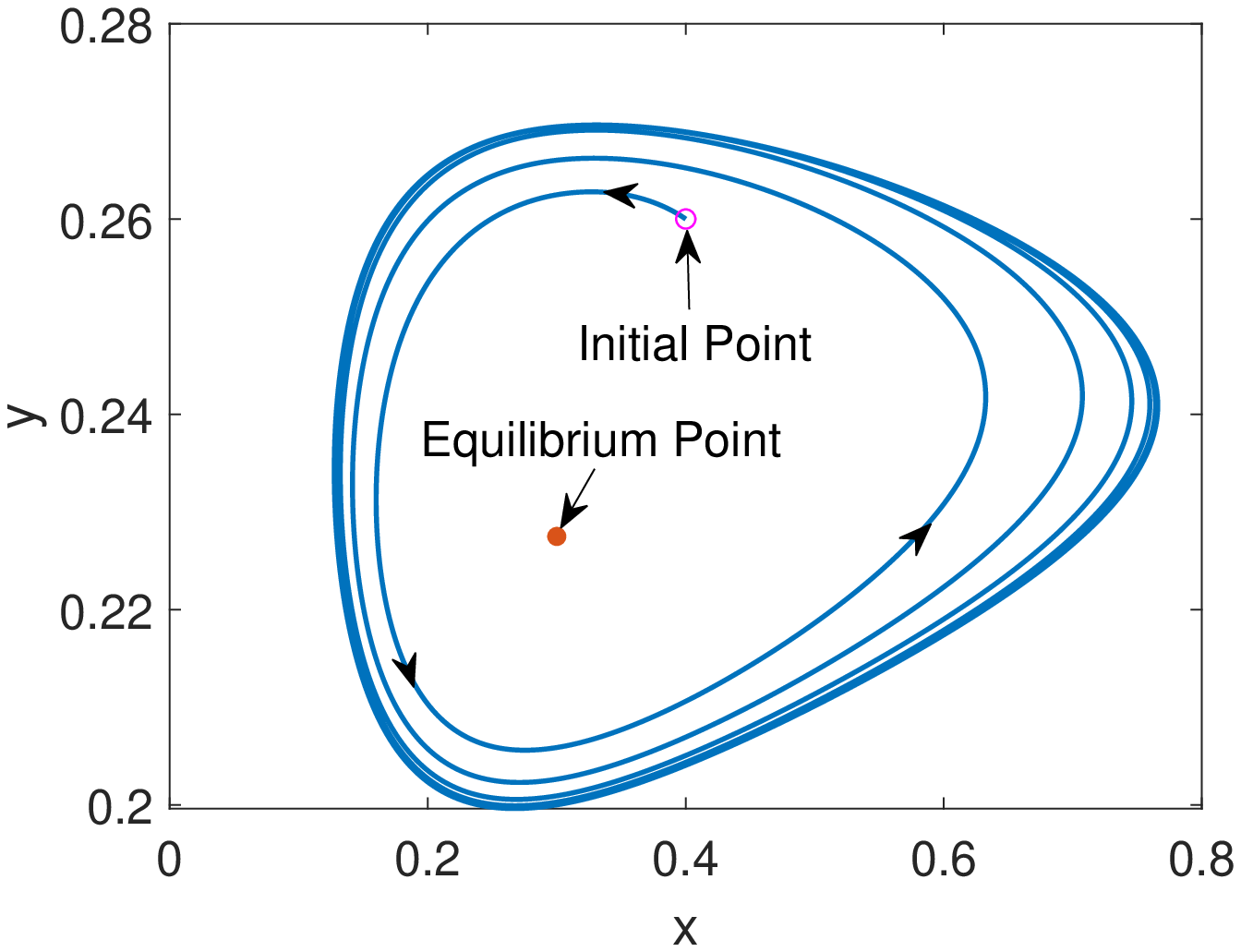}}\\
 \subfloat[$u=1$\label{u_delay}]{%
  \includegraphics[width=0.5\textwidth, height=5cm]{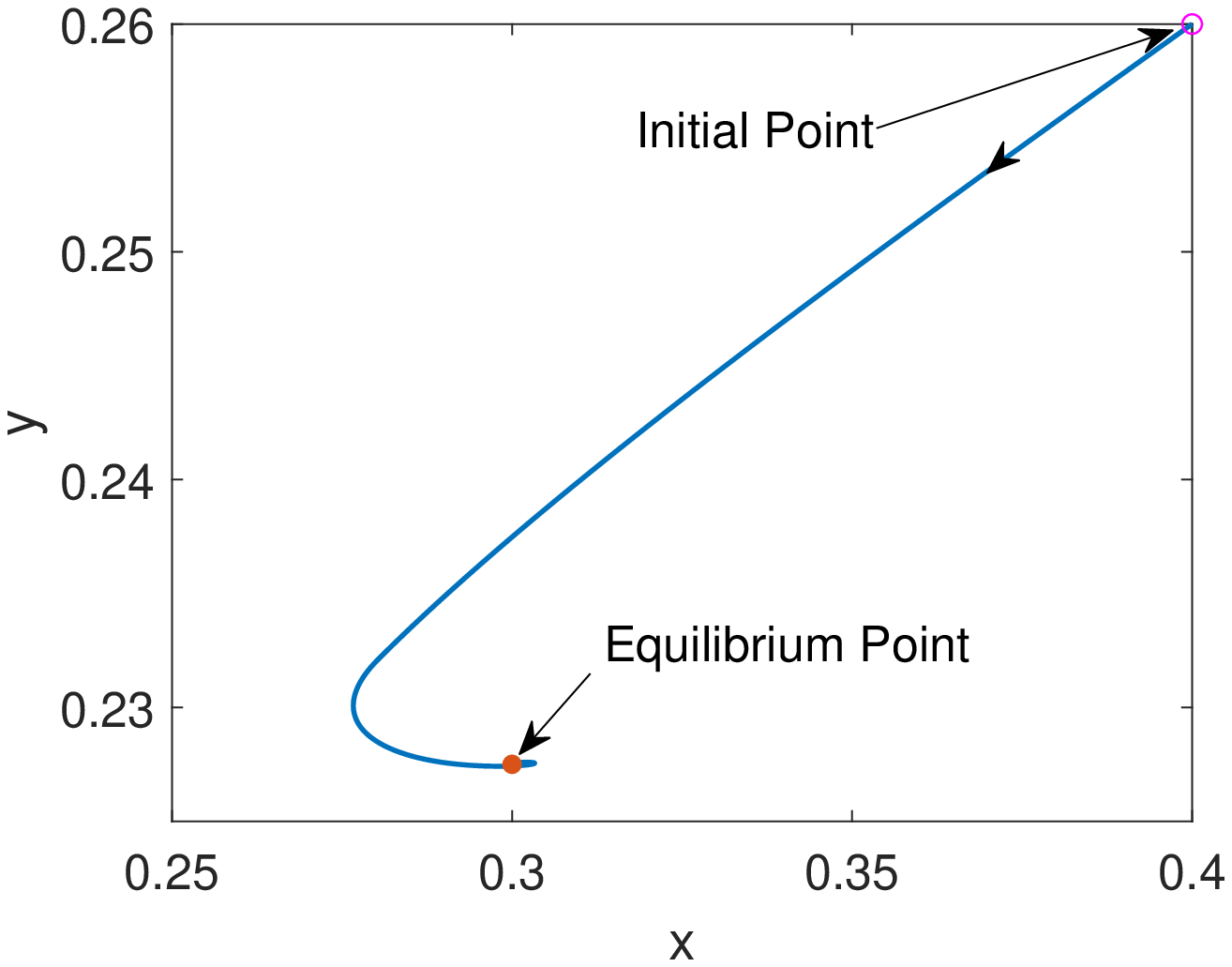}}
 \subfloat[$u=10$]{%
  \includegraphics[width=0.5\textwidth, height=5cm]{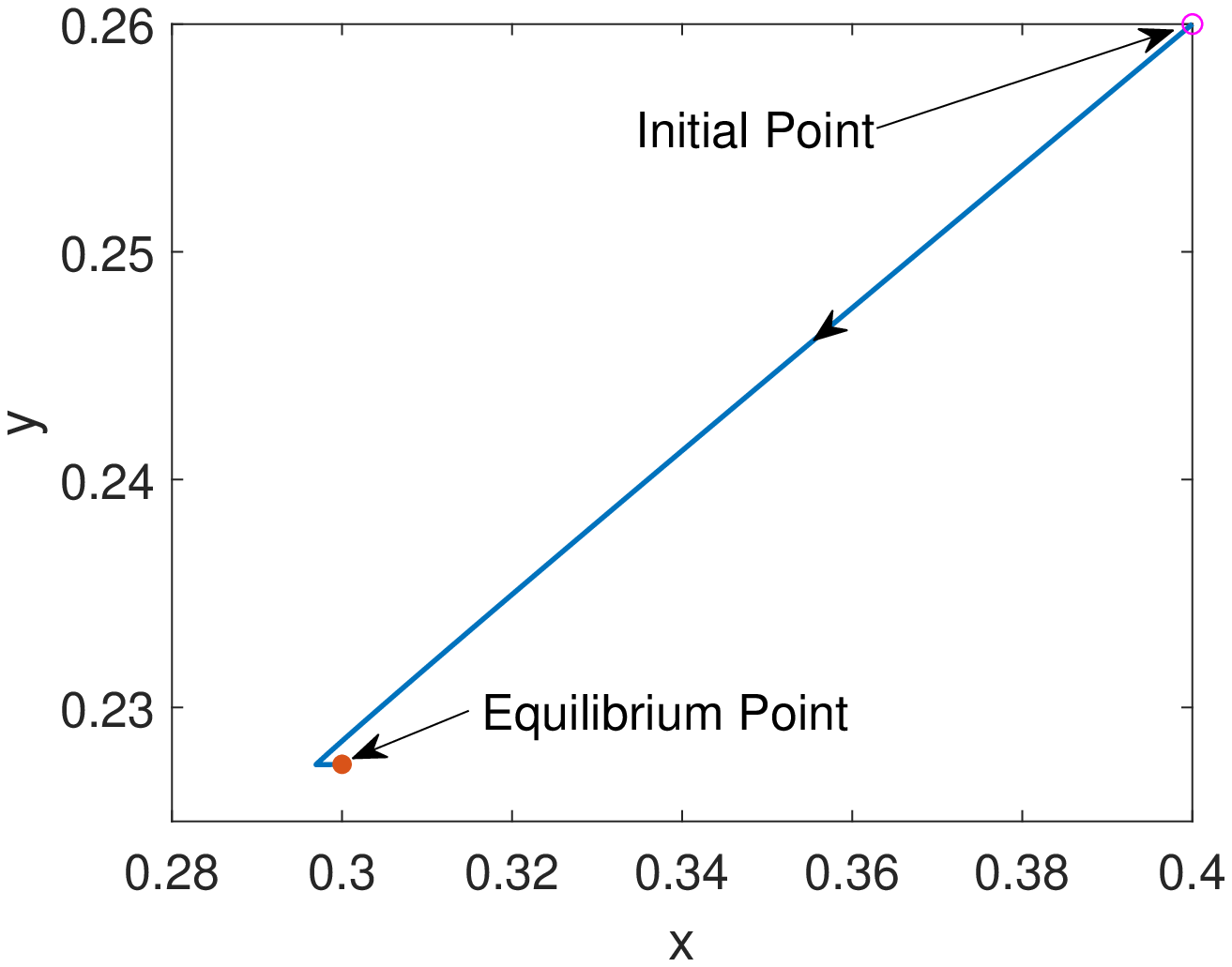}}
 \caption{The initial point of the simulation is $(0.4,0.26)$ in each figure and the equilibrium point is $(0.3,0.1108)$. (a) The equilibrium point is unstable for $u=0$. (b) The equilibrium point is unstable for $u=0.04$. (c) The equilibrium point becomes asymptotically stable for $u=1$. (d) The equilibrium point becomes stable for $u=10$.}
 \label{delay_u_val}
\end{figure}

\section{Conclusion and Discussion} \label{seccon}
The population of generalist predators is greatly affected by the absence of its favourite prey. As an example, in most western and central Europe, the pigeons form the important prey base of most goshawk populations. Due to the scarcity of its preferred prey, it switches to alternative prey. However, in the absence of environmental changes like deforestation or a significant number of haunting, goshawks exhibit stability in breeding numbers\cite{rutz2006}.

In this paper, a two species competition model is considered and investigated along with the assumption that the prey species shows delayed logistic growth and the predator species is of generalist type. Food switching, when prey is scarce, is one of the most common behaviours of the generalist predators. The predator's functional response is modelled by a functional response which shows the grouping behaviour of a prey species in defence against predation. The stability and bifurcation viewpoints may influence the decision related to the management of ecological resources. Here we have obtained the hopf-bifurcation corresponding to the protection provided to the prey species by the environment($c$), the reproduction rate of the generalist predators($d$), maximum rate of death of predators($e$) and residual loss of predator species when there is severe scarcity of prey($a$). If the reproduction rate of the prey($d$) lies between 0.3131 \& 0.4674, a stable limit cycle ensures the oscillation in the numbers of prey and predators. Due to seasonal change and other reasons, $c$(environmental protection provided to the prey species), $ d $(the reproduction rate of the predators), $ a $(residual loss of predator species due to low prey abundance) and $ m $(maximum death rate of the predators) may vary.

Throughout the paper, we have discussed the stability of the positive equilibrium state $E(x^*,y^*)$ of the system (\ref{eq:sys}) as this represents the co-existence of both prey and predator populations. By constructing suitable Lyapunov function $w(v)(t)$, it is observed that the interior equilibrium state is locally asymptotically stable if the conditions
\begin{eqnarray}
\varrho>\pi_o\quad \text{and}\quad \pi_1\varrho^2+\pi_2\varrho+\pi_3>0,
\end{eqnarray}
are satisfied by the parameters in the system (\ref{eq:23}).

The bifurcation point $\varrho=\varrho_0^*$ for Hopf-bifurcation are obtained by considering logistic delay $\varrho$ as the bifurcation parameter.

As the presence of delay has the ability to destabilize a stable ecosystem, so estimation of the length of the delay is very important for system stability. We observed that the delay length is estimated to be within $0\leq \varrho\leq\varrho_+$ for the system (\ref{eq:sys}) to possess local asymptotic stability.

Along with the time series and phase portrait plots for different $\varrho$ values, numerical simulations on bifurcation in the parameters of the system (\ref{eq:sys}) for $\varrho=0$ are also presented in the numerical analysis section in \ref{secnum}. The Hopf-bifurcation diagrams for each parameter are plotted. The system shows Hopf bifurcation at the coexistence state for $c=c_H$ i.e, when the environmental protection provided to the prey species is below a threshold limit($c_H$), the system shows periodic oscillations. The study of the system through a change in the growth rate of the predators($d$), the maximum death rate of the predators($e$) and residual loss of predators when preys are scarce($a$) shows that the system undergoes LPC-bifurcation once and Hopf bifurcation three times. Among three Hopf points, a supercritical Hopf bifurcation occurs for one Hopf point($d=d_{H1}$ or $e=e_{H3}$ or $a=a_{H1}$), a subcritical Hopf bifurcation occurs for another Hopf point($d=d_{H2}$ or $e=e_{H2}$ or $a=a_{H2}$) and the other Hopf bifurcation occurs for the limiting values of the parameters($d\approx 1$ or $e\approx 0.08$ or $a\approx 0.5$). Two limit cycles originate from two Hopf points, which goes on further to collide and disappear at the LPC-point. For certain ranges of the parameters($d_{H2}<d<d_{LPC}$ or $e_{LPC}<e<e_{H2}$ or $a_{H2}<a<a_{LPC}$), two limit cyolorcles appear around the stable equilibrium point among which the inner limit cycle is unstable and the outer limit cycle is stable. In such cases, the rate of convergence of the population densities to the stable states are so small that a long time observation of the populations may seem to be periodic.

The occurrence of limit point bifurcation of cycles gives rise to different stable and unstable states with non-equilibrium states. In such situation, the long term behaviour of the populations depends on the initial population of the species due to the presence of alternate stable states. So, with the same environmental and parametric conditions, the long term population dynamics may show oscillation or may be steady. A small change in the parametric values near the LPC-point can bring severe change in the stability behaviour of the populations as for values lying on one side of the LPC-point will produce steady-state stabilization, and for values on the other side of the LPC-point will show stabilization on alternate stable states(steady-state and limit cycle solution) or on an oscillatory state.

A linear feedback control method is used to to bring back the system to a stable state when the positive equilibrium point of the system becomes unstable. Using local asymptotic stability property, the stability conditions involving the control parameter $u$ are obtained. Using numerical techniques in MATLAB, the range of the control parameter is obtained for which the co-existence equilibrium point becomes locally asymptotically stable.

Time series are designed to examine how the changes in the parameter affect the model output. Therefore, the PRCCs of the model output at a particular instance are obtained with respect to each parameter. With the help of PRCC analysis, we can identify the parameter(s) which is/are most sensitive.
Here as per the Fig \ref{fig_prcc}, it is seen that the parameters d and e are most sensitive for our model, where $d$ and $e$ are the normalized form of parameters $D$ and $E$ respectively representing the reproduction rate and maximum death rate of the generalist predators.

Prey switching is a natural phenomenon in generalist species. Also, prey switching helps in stabilizing prey populations as relatively scarce prey species are freed from predation\cite{jaworski2013} due to the predator's choice of highly abundant prey over prey species of lower abundance.

We assume our system posses a positive equilibrium point. The bifurcation diagrams in section 7, without maturation delay in prey, show that with variations in many system parameters, the populations co-exist due to the behavioural contribution of generalist predators. From PRCC results, we observe that the system is highly sensitive to the growth rate of the generalist predators. The predator species has a stabilizing effect over the prey species at lower growth rates. And for relatively higher growth rates the populations show periodic oscillation.

In case when there is a maturation delay present in prey species, lower delay values give rise to steady-state but higher delay values produce periodic populations. When there is a larger oscillation in population densities, which can pose a risk of prey extinction, the situation can be controlled by the limited removal of both prey and predator biomass from the ecosystem.
\section*{Funding}
Not applicable.
\section*{Conflict of interest/Competing interests}
The authors declare that they have no conflict of interest.
\section*{Availability of data and material (data transparency)}
Not applicable
\section*{Code availability}
Not applicable.


\begin{thebibliography}{99}

\bibitem{van2010} Van Driesche, R. G., Carruthers, R. I., Center, T., Hoddle, M. S., Hough-Goldstein, J., Morin, L., ... \& Van Klinken, R. D. (2010). Classical biological control for the protection of natural ecosystems. Biological control, 54, S2-S33.

\bibitem{causton2001} Causton, C. E. (2001). Dossier on Rodolia cardinalis Mulsant (Coccinellidae: Cocinellinae), a potential biological control agent for the cottony cushion scale, Icerya purchasi Maskell (Margarodidae). Charles Darwin Research Station, Galápagos Islands.

\bibitem{symondson2002} Symondson, W. O. C., Sunderland, K. D., \& Greenstone, M. H. (2002). Can generalist predators be effective biocontrol agents?. Annual review of entomology, 47(1), 561-594.

\bibitem{holling1959} Holling, C. S. (1959). The components of predation as revealed by a study of small-mammal predation of the European pine sawfly.

\bibitem{bedd} Beddington, J.R. (1975). Mutual interference between parasites or predators and its effect on searching efficiency, J. Animal Ecol., 44 331–340.

\bibitem{dea} DeAngelis, D.L., Goldstein, R.A. \& O'Neill, R.V. (1975). A model for trophic interaction Ecology, 56 881–892.

\bibitem{arditi1989} Arditi, R., \& Ginzburg, L. R. (1989). Coupling in predator-prey dynamics: ratio-dependence. Journal of theoretical biology, 139(3), 311-326.

\bibitem{hassell} Hassell, M.P. \& Varley, G.C. (1969). New inductive population model for insect parasites and its bearing on biological control. Nature, 223(5211), pp.1133-1137.

\bibitem{tian2011} Tian, X., \& Xu, R. (2011). Global dynamics of a predator-prey system with Holling type II functional response. Nonlinear Analysis: Modelling and Control, 16(2), 242-253.

\bibitem{liu2019} Liu, H., Zhang, K., Ye, Y., Wei, Y., \& Ma, M. (2019). Dynamic complexity and bifurcation analysis of a host–parasitoid model with Allee effect and Holling type III functional response. Advances in Difference Equations, 2019(1), 1-20.

\bibitem{li2017} Li, T. T., Chen, F. D., Chen, J. H., \& Lin, Q. X. (2017). Stability of a stage-structured plant-pollinator mutualism model with the Beddington-DeAngelis functional response. Nonlinear Funct. Anal, 2017.

\bibitem{gakkhar2002} Gakkhar, S. (2002). Chaos in three species ratio dependent food chain. Chaos, Solitons \& Fractals, 14(5), 771-778.

\bibitem{ioannou2017}
Ioannou, C. C. (2017). Grouping and predation. Encyclopedia of evolutionary psychological science, 1-6.

\bibitem{cosner} Cosner, C., DeAngelis, D.L., Ault, J.S. \& Olson, D.B. (1999). Effects of spatial grouping on the functional response of predators. Theoretical population biology, 56(1), pp.65-75.
\bibitem{tener1965} Tener, J. S. (1965). Muskoxen in Canada: A biological and taxonomic review. Ottawa: Queen's Printer.

\bibitem{davidowicz1988} Davidowicz, P., Gliwicz, Z. M., \& Gulati, R. D. (1988). Can Daphnia prevent a blue-green algal bloom in hypertrophic lakes? Limnologica, 19(2), 21–26.

\bibitem{holmes1972} Holmes, J. C., \& Bethel, W. M. (1972). Modification of intermediate host behaviour by parasites. Supplement I to Zoological Journal of the Linnean Society, 51, pp. 123-149.

\bibitem{andrews1968} Andrews, J. F. (1968). A mathematical model for the continuous culture of microorganisms utilizing inhibitory substrates. Biotechnology and bioengineering, 10(6), 707-723.

\bibitem{edwards1970} Edwards, V. H. (1970). The influence of high substrate concentrations on microbial kinetics. Biotechnology and Bioengineering, 12(5), 679-712.

\bibitem{boon1962} Boon, B., \& Laudelout, H. (1962). Kinetics of nitrite oxidation by Nitrobacter winogradskyi. Biochemical Journal, 85(3), 440-447.

\bibitem{ajraldi2011} Ajraldi, V., Pittavino, M., \& Venturino, E. (2011). Modeling herd behavior in population systems. Nonlinear Analysis: Real World Applications, 12(4), 2319-2338. 

\bibitem{braza2012} Braza, P. A. (2012). Predator–prey dynamics with square root functional responses. Nonlinear Analysis: Real World Applications, 13(4), 1837-1843.

\bibitem{djilali2019}  Djilali, S. (2019). Impact of prey herd shape on the predator-prey interaction. Chaos, Solitons \& Fractals, 120, 139-148.

\bibitem{geritz2013} Geritz, S. A. H., \& Gyllenberg, M. (2013). Group defence and the predator’s functional response. Journal of mathematical biology, 66(4), 705-717.

\bibitem{kumarkumari2021} Kumar, V., \& Kumari, N. (2021). Bifurcation study and pattern formation analysis of a tritrophic food chain model with group defense and Ivlev-like nonmonotonic functional response. Chaos, Solitons \& Fractals, 147, 110964.

\bibitem{sokol1981} Sokol, W., \& Howell, J. A. (1981). Kinetics of phenol oxidation by washed cells. Biotechnology and Bioengineering, 23(9), 2039-2049.

\bibitem{gopalsamy1992} Gopalsamy, K. (1992). Stability and Oscillations in Delay Differential Equations of Population Dynamics (Vol. 74). Springer Science \& Business Media.

\bibitem{ding2017} Ding, D., Qian, X., Hu, W., Wang, N., \& Liang, D. (2017). Chaos and Hopf bifurcation control in a fractional-order memristor-based chaotic system with time delay. The European Physical Journal Plus, 132(11), 1-12.

\bibitem{fowler1986} Fowler, A. C. (1982). An asymptotic analysis of the delayed logistic equation when the delay is large. IMA Journal of Applied Mathematics, 28(1), 41-49.

\bibitem{sun2007} Sun, H., \& Cao, H. (2007). Bifurcations and chaos of a delayed ecological model. Chaos, Solitons \& Fractals, 33(4), 1383-1393.

\bibitem{may1976} May, R. M. (1976). Simple mathematical models with very complicated dynamics. Nature, 261(5560), 459-467.

\bibitem{kundu2018} Kundu, S., \& Maitra, S. (2018). Dynamical behaviour of a delayed three species predator–prey model with cooperation among the prey species. Nonlinear Dynamics, 92(2), 627-643.

\bibitem{hutch1948} Hutchinson, G. E. (1948). Circular causal systems in ecology. Ann. NY Acad. Sci, 50(4), 221-246.

\bibitem{upadhyay2016} Upadhyay, R. K., \& Agrawal, R. (2016). Dynamics and responses of a predator–prey system with competitive interference and time delay. Nonlinear Dynamics, 83(1), 821-837.

\bibitem{murray2002} Murray, J. D. (2002). Mathematical Biology I. An Introduction (Vol. 17). New York: Springer.

\bibitem{alfifi2021} Alfifi, H. Y. (2021). Stability and Hopf bifurcation analysis for the diffusive delay logistic population model with spatially heterogeneous environment. Applied Mathematics and Computation, 408, 126362.

\bibitem{natgeo} National Geograpic. https://www.nationalgeographic.org/encyclopedia/generalist-and-specialist-species/

\bibitem{xiao2001} Xiao, D., \& Ruan, S. (2001). Global analysis in a predator-prey system with nonmonotonic functional response. SIAM Journal on Applied Mathematics, 61(4), 1445-1472.

\bibitem{leslie1948} Leslie, P. H. (1948). Some further notes on the use of matrices in population mathematics. Biometrika, 35(3/4), 213-245.

\bibitem{aziz2003} Aziz-Alaoui, M. A., \& Okiye, M. D. (2003). Boundedness and global stability for a predator-prey model with modified Leslie-Gower and Holling-type II schemes. Applied Mathematics Letters, 16(7), 1069-1075.

\bibitem{batabyal2020} Batabyal, S., Jana, D., Lyu, J. \& Parshad, R.D. (2020). Explosive predator and mutualistic preys: A comparative study. Physica A: Statistical Mechanics and its Applications, 541, p.123348.

\bibitem{morales2021} Morales-Saldaña, J. M., Herman, K. B., Mejía-Falla, P. A., Navia, A. F., Areano, E., Castillo, C. G. A., Espinoza, M., Cevallos, A., Pestana, A. G., González, A., Pérez-Jiménez, J. C., Velez-Zuazo, X., Charvet, P. \& Kyne, P. M. (2021). Eastern Pacific Round Rays, Reference Module in Earth Systems and Environmental Sciences, Elsevier, ISBN 9780124095489. https://doi.org/10.1016/B978-0-12-821139-7.00122-7.

\bibitem{sA3} Kundu, S. \& Maitra, S. (2018). Dynamics of a delayed predator-prey system with stage structure and cooperation for preys, Chaos, Solitons $\&$ Fractals, Vol. 114, pp. 453-460.

\bibitem{kaviya2021} Kaviya, R., \& Muthukumar, P. (2021). Dynamical analysis and optimal harvesting of conformable fractional prey–predator system with predator immigration. The European Physical Journal Plus, 136(5), 1-18.

\bibitem{sA1} Kundu, S. \& Maitra, S. (2016). Stability and delay in a three species predator-prey system. In: AIP Conference Proceedings, vol.1751, p. 020004. AIP Publishing.

\bibitem{prcc1} McKay, M. D., Beckman R. J., \& Conover, W. J. (1979). A comparison of three methods for selecting values of input variables in the analysis of output from a computer code. Technometrics, 21(2):239-245.

\bibitem{prcc2} Hamby, D. M. (1994). A review of techniques for parameter sensitivity analysis of environmental
models. Environmental Monitoring and Assessment, 32(2):135-154.

\bibitem{nadim2018} Nadim, S. S., Samanta, S., Pal, N., ELmojtaba, I. M., Mukhopadhyay, I., \& Chattopadhyay, J. (2018). Impact of predator signals on the stability of a Predator–Prey System: A Z-control approach. Differential Equations and Dynamical Systems, 1-17.

\bibitem{loehle2006} Loehle, C. (2006). Control theory and the management of ecosystems. Journal of applied ecology, 43(5), 957-966.

\bibitem{zhang2016} Zhang, Y., Yan, X., Liao, B., Zhang, Y., \& Ding, Y. (2016). Z-type control of populations for Lotka–Volterra model with exponential convergence. Mathematical biosciences, 272, 15-23.

\bibitem{erbe1986}
Erbe, L. H., Freedman, H. I., \& Rao, V. S. H (1986). Three-species food-chain models with mutual interference and time delays. Mathematical Biosciences, 80(1), 57-80.

\bibitem{bb2006}
Bandyopadhyay, M., \& Banerjee, S. (2006). A stage-structured prey–predator model with discrete time delay. Applied Mathematics and Computation, 182(2), 1385-1398.

\bibitem{rutz2006} Rutz, C., \& Bijlsma, R. G. (2006). Food-limitation in a generalist predator. Proceedings of the Royal Society B: Biological Sciences, 273(1597), 2069-2076.

\bibitem{jaworski2013} Jaworski, C. C., Bompard, A., Genies, L., Amiens-Desneux, E., \& Desneux, N. (2013). Preference and prey switching in a generalist predator attacking local and invasive alien pests. PLoS One, 8(12), e82231.

\end{thebibliography}
\end{document}